\newcommand\dom{\operatorname{dom}}
\DeclareMathOperator{\re}{Re}
\DeclareMathOperator{\im}{Im}
\DeclareMathOperator{\Span}{span}
\DeclareMathOperator{\ran}{ran}  
\DeclareMathOperator{\ess}{ess} 
\DeclareMathOperator{\supp}{supp} 
\DeclareMathOperator{\eq}{eq}
\DeclareMathOperator{\sech}{sech}
\newcommand{\bb}[1]{\mathbf{#1}}
\newcommand{\bs}[1]{\boldsymbol{#1}}
\newcommand{\mc}[1]{\mathcal{#1}}
\newcommand{\la}{\langle}
\newcommand{\ra}{\rangle}
\newcommand{\EE}{\mathcal E}  
\begin{document}
\title{A nonlinear Klein-Gordon equation on a star graph}

\author{Nataliia Goloshchapova}

\authormark{Nataliia Goloshchapova \textsc{}}

\address{\orgdiv{Institute of Mathematics}, \orgname{University of São Paulo}, \orgaddress{\state{São Paulo}, \country{Brazil}}}

\corres{\email{nataliia@ime.usp.br}}

\presentaddress{R. do Matão, 1010, São Paulo - SP, 05508-090}

\abstract[Summary]{We study local   well-posedness  and orbital stability/instability of standing waves for  a  first order system associated with a nonlinear Klein-Gordon equation on a star graph.  The proof of the well-posedness uses a classical fixed point argument and the Hille-Yosida theorem. Stability study  relies on the  linearization approach and recent results  for the  NLS equation with the $\delta$-interaction on a star graph.  }

\keywords{$\delta$-interaction, nonlinear Klein-Gordon equation,  orbital stability, standing wave,  star graph.}

\maketitle

\section{Introduction}
The study of differential  equations on  graphs is a rapidly developing area (see  \cite{BerKuc13} and the references therein). It is  motivated by various physical applications involving wave
propagation in narrow waveguides. Graphs arise as  approximations of  multi-dimensional narrow waveguides when their
thickness parameters converge to zero.
 A large part of the literature is devoted to linear equations on graphs, with special emphasis on the Schr\"odinger
equation describing the so-called quantum graphs.  The models on a  star graph $\Gamma$  constituted  by $N$ half-lines  joined at the vertex $\nu=0$ are one of the simplest.  Recently a certain amount o research  work has been done on the nonlinear Schr\"odinger
equation with the $\delta$-interaction (NLS-$\delta$)  on  $\Gamma$ (see \cite{AngGol18a, CacFin17, Noj14}, and the references therein):
 \begin{equation}\label{NLS_graph_ger}
i\partial_t \mathbf{u}(t,x)=H_\alpha\mathbf{u}(t,x)-|\mathbf{u}(t,x)|
^{p-1}\mathbf{u}(t,x),
\end{equation}
where $p>1$, $\mathbf{u}(t,x)=(u_j(t,x))_{j=1}^N:\mathbb{R}\times \mathbb{R}_+\rightarrow \mathbb{C}^N$, and $H_\alpha$ is the self-adjoint operator on $L^2(\Gamma)$ defined  by
\begin{equation}\label{D_alpha}
\begin{split}
(H_\alpha \mathbf{v})(x)&=\left(-v_j''(x)\right)_{j=1}^N,\quad x> 0,\quad  \mathbf{v}=(v_j)_{j=1}^N,\\
\dom(H_\alpha)&=\Big\{\mathbf{v}\in H^2(\Gamma): v_1(0)=\ldots=v_N(0),\,\,\sum\limits_{j=1}^N  v_j'(0)=\alpha v_1(0)\Big\}.
\end{split}
\end{equation}
The NLS-$\delta$ equation has been studied  in the
context of well-posedness, variational properties,  existence, and stability of standing waves. 
In his survey \cite{Noj14} about the NLS on graphs,  Noja along with the model \eqref{NLS_graph_ger}, mentioned  (as one of the main examples of PDEs on the star graph)   the following nonlinear Klein-Gordon equation 
with the $\delta$-interaction (NKG-$\delta$):
\begin{equation}\label{NKG_graph_ger}
-\partial_t^2 \mathbf{u}(t,x)=H_\alpha\mathbf{u}(t,x) +m^2\bb u(t,x)-|\mathbf{u}(t,x)|^{p-1}\mathbf{u}(t,x).
\end{equation}
On each edge of the graph (i.e. on each half-line) we have
$$-\partial_t^2 u_j(t,x)=-\partial_x^2u_j(t,x) +m^2u_j(t,x)-|u_j(t,x)|^{p-1}u_j(t,x),\quad x>0,\quad j\in\{1,\ldots,N\},$$ 
moreover, the vectors $\bb u(t,0)=(u_j(t,0))_{j=1}^N$ and $\bb u'(t,0)=(u'_j(t,0))_{j=1}^N$ satisfy the conditions in \eqref{D_alpha}.
To our knowledge, the   NKG-$\delta$ equation has never been studied in the context of well-posedness and stability of standing waves. 

In the present paper we  aim to initiate this  study. The stability/instability  study of standing wave solutions of the NKG equation in homogeneous media (in $n$ space dimensions) was started by Shatah in \cite{Sha83, Sha85}, and then continued in \cite{JeaCoz09, OhtTod07}. We rely on the recent research  \cite{CsoGen18}, where the authors considered the nonlinear Klein-Gordon equation with $\delta$-potentials on $\mathbb{R}$
\begin{equation*}\label{NLS_line}
-\partial_t^2 u(t,x)=-\partial_x^2u(t,x) +m^2u(t,x)+\gamma\delta(x)u(t,x)+i\alpha\delta(x)\partial_tu(t,x)-|u(t,x)|^{p-1}u(t,x),
\end{equation*}  
$\gamma,\alpha\in \mathbb{R}$, and $\delta(x)$ is Dirac delta function.

We prove local well-posedness  of the  Cauchy problem for a first order Hamiltonian system associated with \eqref{NKG_graph_ger}, using  classical approach related on the theory of $C_0$  semigroups (see \cite{CazHar98, Paz83} for the detailed exposition).  In particular, it has been shown that certain operator $A-\beta$ associated with equation \eqref{NKG_graph_ger} is dissipative. 

The main goal is the study of orbital stability of the standing wave solutions $\bb u(t,x)= e^{i\omega t}\bs\varphi(x)$ to \eqref{NKG_graph_ger}, where the profile $\bs\varphi(x)$ is a real-valued vector function. The profile $\bs \varphi(x)$ satisfies the following stationary equation \begin{equation}\label{H_alpha}
H_\alpha\bs{\varphi}+(m^2-\omega^2)\bs{\varphi}-|\bs{\varphi}|
^{p-1}\bs{\varphi}=0.
\end{equation}
Applying  \cite[Theorem 4]{AdaNoj14},  one gets  the description of  real-valued vector  solutions to \eqref{H_alpha}.
\begin{theorem}
Let  $[s]$ denote the integer part of $s\in\mathbb{R}$, and $\alpha\neq 0$.  Then equation \eqref{H_alpha} has $\left[\tfrac{N-1}{2}\right]+1$ (up to permutations of the edges of $\Gamma$) real-valued vector solutions $\bs{\varphi}_{k,\omega}^\alpha=(\tilde{\varphi}_{k,j})_{j=1}^N, \,\,k\in\left\{0,\ldots,\left[\tfrac{N-1}{2}\right]\right\}$, which are given by
\begin{equation}\label{Phi_k}
\begin{split}
 \tilde{\varphi}_{k,j}(x)&= \left\{
                    \begin{array}{ll}
                      \Big[\frac{(p+1)(m^2-\omega^2)}{2} \sech^2\Big(\frac{(p-1)\sqrt{m^2-\omega^2}}{2}x-c_k\Big)\Big]^{\frac{1}{p-1}}, & \quad\hbox{$j=1,\ldots,k$;} \\
                     \Big[\frac{(p+1)(m^2-\omega^2)}{2} \sech^2\Big(\frac{(p-1)\sqrt{(m^2-\omega^2)}}{2}x+c_k\Big)\Big]^{\frac{1}{p-1}}, & \quad\hbox{$j=k+1,\ldots,N,$}
                    \end{array}
                  \right.\\
                  \text{where}\;\; c_k&=\tanh^{-1}\bigg(\tfrac{\alpha}{(2k-N)\sqrt{(m^2-\omega^2)}}\bigg),\,\,\text{and}\,\,\,\,m^2-\omega^2>\tfrac{\alpha^2}{(N-2k)^2}.
                  \end{split}
\end{equation}
\end{theorem}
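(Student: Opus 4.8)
The plan is to split \eqref{H_alpha} into $N$ decoupled scalar half-line problems and then to glue them through the two conditions in \eqref{D_alpha}. Writing $\mu:=m^2-\omega^2$ and using that $H_\alpha$ acts componentwise as $-\partial_x^2$, every component of a real solution $\bs\varphi=(\tilde\varphi_j)_{j=1}^N\in\dom(H_\alpha)\subset H^2(\Gamma)$ satisfies
\begin{equation*}
-\tilde\varphi_j''+\mu\,\tilde\varphi_j-|\tilde\varphi_j|^{p-1}\tilde\varphi_j=0\ \ \text{on }(0,\infty),\qquad \tilde\varphi_j(x),\ \tilde\varphi_j'(x)\to 0\ \ \text{as }x\to+\infty,
\end{equation*}
the decay being forced by $\tilde\varphi_j\in H^2(0,\infty)$. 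One needs $\mu>0$ for a nontrivial decaying solution, and then the first integral $(\tilde\varphi_j')^2=\mu\,\tilde\varphi_j^2-\tfrac{2}{p+1}|\tilde\varphi_j|^{p+1}$ (vanishing constant, by the decay) reduces the phase portrait to the origin together with the homoclinic loop; hence each $\tilde\varphi_j$ is either $\equiv 0$ or a translate $\pm\phi_\mu(\cdot-a_j)$, $a_j\in\mathbb R$, of the soliton $\phi_\mu(x):=\big[\tfrac{(p+1)\mu}{2}\big]^{1/(p-1)}\sech^{2/(p-1)}\!\big(\tfrac{(p-1)\sqrt\mu}{2}x\big)$. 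This classification of the scalar profiles is exactly \cite[Theorem 4]{AdaNoj14} applied with spectral parameter $\mu$, and it is the step carrying all the analytic content (including the mild care needed when $1<p<2$, where the nonlinearity fails to be Lipschitz at the origin). Since $\phi_\mu>0$ everywhere, the continuity condition $\tilde\varphi_1(0)=\cdots=\tilde\varphi_N(0)$ forbids a zero component sitting next to a nonzero one, as well as a $+$ component sitting next to a $-$ one; so a nontrivial solution has all components equal to positive bumps $\phi_\mu(\cdot-a_j)$, and, flipping the overall sign if needed, we take the $+$ sign throughout.

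Next I would impose continuity and the $\delta$-coupling on this reduced family. As $\phi_\mu$ is even and strictly decreasing on $[0,\infty)$, the equalities $\phi_\mu(-a_1)=\cdots=\phi_\mu(-a_N)$ give $|a_1|=\cdots=|a_N|$, so the $a_j$ take at most two opposite values $\pm a$; permuting the edges, say $a_1=\cdots=a_k=a$ and $a_{k+1}=\cdots=a_N=-a$. Differentiating the explicit profile yields $\phi_\mu'(x)=-\sqrt\mu\,\tanh\!\big(\tfrac{(p-1)\sqrt\mu}{2}x\big)\phi_\mu(x)$, hence $\tilde\varphi_j'(0)=\phi_\mu'(-a_j)=\pm\sqrt\mu\,\tanh(c)\,\phi_\mu(a)$ with $c:=\tfrac{(p-1)\sqrt\mu}{2}a$, the sign being $+$ for $j\le k$ and $-$ for $j>k$. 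The condition $\sum_{j=1}^N\tilde\varphi_j'(0)=\alpha\,\tilde\varphi_1(0)$ then reads $(2k-N)\sqrt\mu\,\tanh(c)\,\phi_\mu(a)=\alpha\,\phi_\mu(a)$, and dividing by $\phi_\mu(a)>0$ gives $\tanh(c)=\tfrac{\alpha}{(2k-N)\sqrt\mu}$, i.e.
\begin{equation*}
c=c_k=\tanh^{-1}\!\Big(\tfrac{\alpha}{(2k-N)\sqrt{m^2-\omega^2}}\Big),
\end{equation*}
which is meaningful precisely when $\big|\tfrac{\alpha}{(2k-N)\sqrt{m^2-\omega^2}}\big|<1$, that is $m^2-\omega^2>\tfrac{\alpha^2}{(N-2k)^2}$ (so $N-2k\neq0$; note $a=0$ would force $0=\alpha\phi_\mu(0)$, impossible since $\alpha\neq0$). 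Substituting $a=2c_k/\big((p-1)\sqrt{m^2-\omega^2}\big)$ back into the bumps and recalling $\mu=m^2-\omega^2$ reproduces \eqref{Phi_k} verbatim; conversely each $\bs\varphi_{k,\omega}^\alpha$ in \eqref{Phi_k} is checked to solve \eqref{H_alpha} by running these identities backwards.

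It remains to do the counting. For $k\in\{0,\dots,N\}$ with $N-2k\neq0$ the previous step gives one solution, up to the choice of which $k$ edges carry the shift $a$; moreover $c_{N-k}=-c_k$, so the multiset of components of the index-$k$ solution equals that of the index-$(N-k)$ solution, whence the two coincide up to a permutation of the edges. Therefore the distinct solutions are those with $k\in\{0,\dots,[\tfrac{N-1}{2}]\}$ — the value $k=N/2$, possible only for even $N$, carrying no solution — and there are $[\tfrac{N-1}{2}]+1$ of them, each living on the range $m^2-\omega^2>\tfrac{\alpha^2}{(N-2k)^2}$. The main obstacle is thus squarely the phase-plane classification of the decaying scalar solutions invoked in the first paragraph, which is why it is convenient to quote it from \cite[Theorem 4]{AdaNoj14}; everything downstream is an elementary computation with $\sech$ and $\tanh$ together with the permutation bookkeeping just described.
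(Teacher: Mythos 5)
Your argument is correct, but it follows a genuinely different route from the paper: the paper offers no independent derivation at all — it obtains the statement by directly invoking \cite[Theorem 4]{AdaNoj14} with the frequency parameter there replaced by $m^2-\omega^2$, since the stationary equation \eqref{H_alpha} is exactly the NLS-$\delta$ stationary equation with that substitution. You instead reprove that result from scratch: componentwise reduction to the scalar ODE on the half-line, the zero-energy first integral forcing each nontrivial decaying component to be a translate of the soliton $\phi_\mu$, the vertex continuity and $\delta$-coupling producing $\tanh(c_k)=\alpha/((2k-N)\sqrt{\mu})$ together with the constraint $m^2-\omega^2>\alpha^2/(N-2k)^2$ (and the exclusion of $2k=N$ and $a=0$ when $\alpha\neq0$), and the identification $k\leftrightarrow N-k$ giving the count $\left[\tfrac{N-1}{2}\right]+1$ — all of which checks out, including the derivative computation $\phi_\mu'=-\sqrt{\mu}\tanh(\tfrac{(p-1)\sqrt{\mu}}{2}\,\cdot\,)\phi_\mu$ and the sign normalization. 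The citation route is of course shorter; your derivation buys self-containedness and makes transparent where the admissibility condition and the counting come from. One small attribution slip: \cite[Theorem 4]{AdaNoj14} is the full star-graph classification (the statement being proved), not the scalar half-line classification you quote it for; for the scalar step you should either cite the standard phase-plane/uniqueness argument you sketch (with the noted care for $1<p<2$) or simply cite the theorem for the whole vector statement, as the paper does.
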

In Theorem \ref{main} we provide a sufficient condition on the parameters $\omega, m, \alpha, k, N$ to get the orbital stability/instability of the standing waves $e^{i\omega t}\bs{\varphi}_{k,\omega}^\alpha(x)$. 
The orbital stability is studied in the context of a Hamiltonian system  associated with the NKG-$\delta$ equation.  Its investigation   relies  on the classical works by Grillakis, Shatah, and Strauss \cite{GrilSha87, GrilSha90} and recent work  \cite{ Stu08} by Stuart.
The proof of stability/instability  result essentially uses spectral analysis of certain self-adjoint Schr\"odinger operators on the star graph. This analysis was elaborated extensively in  papers \cite{AngGol18, AngGol18a}   devoted to the stability study of standing waves for the  NLS-$\delta$  equation. The principal ingredients of the spectral analysis are the analytic perturbation theory and the  extension theory of symmetric operators.
\vspace{0.5 cm}

\noindent{\bf Notation.}

Let $L$ be  a densely defined symmetric operator in some Hilbert space.
\,The deficiency  numbers of $L$ are defined by   $n_\pm(L):=\dim \ker(L^*\mp iI)$. The number of negative eigenvalues counting multiplicities (\textit{the Morse index}) is denoted by  $n(L)$. 

We regard  $L^2(\mathbb{R}_+)$  as a real Hilbert space with the inner product 
$$\la u, v\ra_{L^2(\mathbb{R}_+)}=\re\int\limits_{\mathbb{R}_+}u\overline{v}dx,$$
and  $H^1(\mathbb{R}_+)$ as the Sobolev space with the inner product
 $$\la u, v\ra_{H^1(\mathbb{R}_+)}=\la u, v\ra_{L^2(\mathbb{R}_+)}+\la u', v'\ra_{L^2(\mathbb{R}_+)}.$$

  We consider   the star graph $\Gamma$  constituted by $N$ half-lines $\mathbb{R_+}$ attached  to a common vertex $\nu=0$.  The function $\bb w$ acting on $\Gamma$ is represented by the vector $(w_j)_{j=1}^N,$ where each scalar function $w_j$ is defined on $[0,\infty)$. For   $\bb w=(w_j)_{j=1}^N$  on $\Gamma$, we will abbreviate
$$\int\limits_\Gamma\bb w dx=\sum\limits_{j=1}^N\int\limits_{\mathbb{R}_+}w_j dx.$$
On the graph we define the following spaces 
\begin{equation*}
 L^q(\Gamma)=\bigoplus\limits_{j=1}^NL^q(\mathbb{R}_+),\, 1\leq q\leq \infty,\quad H^1(\Gamma)=\bigoplus\limits_{j=1}^NH^1(\mathbb{R}_+),\quad H^2(\Gamma)=\bigoplus\limits_{j=1}^NH^2(\mathbb{R}_+). 
\end{equation*}  
The corresponding $L^2$- and $H^1$-inner products are defined by  
$$\la \bb u, \bb v\ra_{L^2(\Gamma)}=\re\int\limits_\Gamma\bb u\overline{\bb v}dx, \qquad  \la \bb u, \bb v\ra_{H^1(\Gamma)}=\re\left[\int\limits_\Gamma\bb u\overline{\bb v}dx+\int\limits_\Gamma\bb u'\overline{\bb v}'dx\right].$$ 
By $\EE (\Gamma)$ we denote the space
$$\mathcal{E}(\Gamma)
=\{\bb v\in H^1(\Gamma): \,  v_1(0)=\ldots=v_N(0)\}.$$
The dual space for $\mc E(\Gamma)$ is denoted by $\mc E^*(\Gamma)$.

Set $X=\mc E(\Gamma)\times L^2(\Gamma)=\{(\bb u,\bb v): \bb u\in \mc E(\Gamma), \bb v\in  L^2(\Gamma)\}$ for the real Hilbert space  with the inner product
$$\la(\bb u_1, \bb v_1), (\bb u_2, \bb v_2)\ra_X=\la\bb u_1, \bb u_2\ra_{H^1(\Gamma)}+\la\bb v_1, \bb v_2\ra_{L^2(\Gamma)}.$$
Its dual  $X^*$  is identified  with $\mc E^*(\Gamma)\times L^2(\Gamma)$, and the duality pairing is denoted by  
$\la\cdot, \cdot\ra_{X^*\times X}.$ For $k\in\{0,\ldots, N-1\}$ we define  the spaces 
\begin{equation*}
\begin{split}
  L^2_k(\Gamma)&=\{\bb v\in L^2(\Gamma): v_1(x)=\ldots=v_k(x),\,v_{k+1}(x)=\ldots=v_N(x), x\geq 0\},
 \qquad \mbox{and}\\  
 \mc E_k(\Gamma)&=\mc E(\Gamma)\cap L^2_k(\Gamma),\quad X_k=\mc E_k(\Gamma)\times L^2_k(\Gamma).
\end{split}
\end{equation*}
If $k=0$, then  $L^2_{\eq}(\Gamma)=L^2_0(\Gamma)$, $\mc E_{\eq}(\Gamma)=\mc E(\Gamma)\cap L^2_{\eq}(\Gamma)$, and $X_{\eq}=\mc E_{\eq}(\Gamma)\times L^2_{\eq}(\Gamma).$

\section{Local well-posedness}\label{sec2}
We consider the following Cauchy problem
\begin{align}\label{Cauchy1}
\left\{\begin{array}{c}
-\partial_t^2 \mathbf{u}(t,x)=H_\alpha\mathbf{u}(t,x) +m^2\bb u(t,x)-|\mathbf{u}|^{p-1}\mathbf{u},\\
\bb u(0,x)=\bb u_0(x),\qquad\qquad\qquad\qquad\qquad\qquad\\
\bb \partial_t\bb u(0,x)=\bb u_1(x).\qquad\qquad\qquad\qquad\qquad\qquad
\end{array}\right.
\end{align}
Let us reformulate \eqref{Cauchy1} as a first order system on $X$. Denoting $\bb v=\partial_t\bb u$, $\bb U=(\bb u, \bb v)$, $F(\bb U)=(0, |\bb u|^{p-1}\bb u)$,  and $\bb U_0=(\bb u_0, \bb u_1),$ we formally get from \eqref{Cauchy1}
\begin{align}\label{Cauchy2}
\left\{\begin{array}{c}
\partial_t\bb U(t)=A\bb U(t)+F(\bb U(t)),\\
\bb U(0)=\bb U_0,\qquad\qquad\qquad
\end{array}\right.
\end{align}
 where 
 \begin{equation*}
 \begin{split}
 &A=\left(\begin{array}{cc}0&Id_{L^2(\Gamma)}\\
 -H_\alpha-m^2& 0 
 \end{array}\right),\\
 \dom(A)=&\left\{\begin{array}{c}
 (\bb u, \bb v)\in H^2(\Gamma)\times \mc E(\Gamma): \\ 
 u_1(0)=\ldots=u_N(0),\,\, \sum\limits_{j=1}^N  u_j'(0)=\alpha u_1(0)\end{array}\right\}=\dom(H_\alpha)\times \mc E(\Gamma). 
 \end{split}
  \end{equation*}
   Below we will prove  existence and uniqueness of a weak solution  $\bb U(t)\in C([0,T], X)$ to system \eqref{Cauchy2} (see \cite{Bal77} for the definition of a weak solution).  The proof is in the spirit of \cite[Chapter 4]{CazHar98}.
   First, we  prove that operator $A$ generates strongly continuous  semigroup on $X$.
   \begin{proposition}\label{C_0}
 The operator $A$ generates $C_0$-semigroup on $X$. Moreover, there exist $M\geq 0$ and $\beta\geq 0$ such that for all $t\geq 0$ the following estimate holds
 \begin{equation}\label{est_group}
\| e^{tA}\|\leq M e^{\beta t}.
 \end{equation}
   \end{proposition}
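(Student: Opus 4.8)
The plan is to apply the Lumer–Phillips / Hille–Yosida generation theorem to a suitably shifted operator $A-\beta$. Since $A$ itself need not be dissipative on $X$, the first step is to choose $\beta \geq 0$ large enough that $A-\beta$ becomes dissipative, and then verify that the range condition $\ran(A-\beta-\lambda) = X$ holds for some (equivalently, for all) $\lambda > 0$. Granting both, the Lumer–Phillips theorem gives that $A-\beta$ generates a $C_0$-semigroup of contractions, hence $A$ generates a $C_0$-semigroup satisfying $\|e^{tA}\| \leq e^{\beta t}$, which is the asserted estimate with $M=1$.

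**Dissipativity.** I would compute, for $\bb U = (\bb u, \bb v) \in \dom(A)$, the inner product $\langle A\bb U, \bb U\rangle_X = \langle \bb v, \bb u\rangle_{H^1(\Gamma)} + \langle -H_\alpha \bb u - m^2 \bb u, \bb v\rangle_{L^2(\Gamma)}$. Expanding $\langle \bb v, \bb u\rangle_{H^1(\Gamma)} = \langle \bb v, \bb u\rangle_{L^2(\Gamma)} + \langle \bb v', \bb u'\rangle_{L^2(\Gamma)}$ and integrating by parts in the term $\langle -H_\alpha \bb u, \bb v\rangle_{L^2(\Gamma)} = \langle \bb u'', \bb v\rangle_{L^2(\Gamma)}$, the boundary contributions at the vertex combine as $\sum_j u_j'(0)\overline{v_j(0)}$; because both $\bb u$ and $\bb v$ lie in $\mc E(\Gamma)$ (all components agree at $0$) and $\bb u \in \dom(H_\alpha)$ (so $\sum_j u_j'(0) = \alpha u_1(0)$), this boundary term equals $\alpha\, u_1(0)\overline{v_1(0)}$, which is real but not sign-definite. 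After the integration by parts, $\langle \bb u'', \bb v\rangle_{L^2} = -\langle \bb u', \bb v'\rangle_{L^2} + (\text{vertex term})$, so the two $\langle \bb u', \bb v'\rangle$-type contributions cancel and one is left with $\re\langle A\bb U, \bb U\rangle_X = \re\big[(1 - m^2)\langle \bb v, \bb u\rangle_{L^2(\Gamma)} + \alpha\, u_1(0)\overline{v_1(0)}\big]$. Both remaining terms are bounded by $C\|\bb U\|_X^2$: the first by Cauchy–Schwarz, the second via the trace inequality $|v_1(0)|^2 \leq \varepsilon\|\bb v'\|_{L^2}^2 + C_\varepsilon\|\bb v\|_{L^2}^2$ (similarly for $u_1$). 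Hence there is $\beta \geq 0$ with $\re\langle (A-\beta)\bb U, \bb U\rangle_X \leq 0$ for all $\bb U \in \dom(A)$, i.e. $A-\beta$ is dissipative.

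**The range condition** is the main obstacle and the step that uses the structure of $H_\alpha$ most heavily. I must show that for some $\lambda > 0$ the equation $(\lambda + \beta)\bb U - A\bb U = \bb G$ is solvable in $\dom(A)$ for every $\bb G = (\bb g_1, \bb g_2) \in X$. Writing this out, the first row gives $\bb v = (\lambda+\beta)\bb u - \bb g_1$, and substituting into the second row yields an elliptic problem $(H_\alpha + m^2)\bb u + (\lambda+\beta)^2 \bb u = \bb g_2 + (\lambda+\beta)\bb g_1$ for $\bb u$. Choosing $\lambda$ large makes the constant $m^2 + (\lambda+\beta)^2$ positive, so this is $(H_\alpha + \mu)\bb u = \bb h$ with $\mu > 0$ and $\bb h \in \mc E^*(\Gamma)$ (the right-hand side need only be in the dual of the form domain); since $H_\alpha$ is a self-adjoint, lower-semibounded operator on $L^2(\Gamma)$, $\mu$ can be taken in its resolvent set, giving a unique $\bb u \in \dom(H_\alpha)$. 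Then $\bb v = (\lambda+\beta)\bb u - \bb g_1 \in \mc E(\Gamma)$ because $\bb u \in \dom(H_\alpha) \subset \mc E(\Gamma)$ and $\bb g_1 \in \mc E(\Gamma)$, so $(\bb u,\bb v) \in \dom(A)$. A mild subtlety: one must check $\bb h$ genuinely lies in $\mc E^*(\Gamma)$ and invoke the form version of invertibility, since $\bb g_1 \in \mc E(\Gamma)$ is only $H^1$, not $H^2$; this is handled by the standard identification of $H_\alpha$ with the operator associated to its closed quadratic form. With dissipativity and the range condition established, Lumer–Phillips closes the argument and yields \eqref{est_group}.
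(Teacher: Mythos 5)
Your range-condition step is essentially the paper's argument (and in fact the subtlety you flag is vacuous: the right-hand side $\bb g_2+(\lambda+\beta)\bb g_1$ already lies in $L^2(\Gamma)$ since $\bb g_1\in\mc E(\Gamma)\subset L^2(\Gamma)$, so the operator version of invertibility suffices). The genuine gap is in your dissipativity step. After integrating by parts you are left, in the \emph{plain} inner product of $X=\mc E(\Gamma)\times L^2(\Gamma)$, with
\begin{equation*}
\la A\bb U,\bb U\ra_X=(1-m^2)\la\bb v,\bb u\ra_{L^2(\Gamma)}-\alpha\,\re\bigl(u_1(0)\overline{v_1(0)}\bigr),
\end{equation*}
and the boundary term cannot be absorbed: the $X$-norm controls $\bb v$ only in $L^2(\Gamma)$, so the trace $v_1(0)$ is not controlled at all, and the inequality you invoke, $|v_1(0)|^2\le\varepsilon\|\bb v'\|^2_{L^2}+C_\varepsilon\|\bb v\|^2_{L^2}$, uses $\|\bb v'\|_{L^2}$, which does not appear in $\|\bb U\|_X$. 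The failure is not just of the estimate but of the statement: for $\alpha\neq 0$ fix a real-valued $\bb u\in\dom(H_\alpha)$ with $u_1(0)\neq 0$ and pick real-valued $\bb v_n\in\mc E(\Gamma)$ with $\|\bb v_n\|_{L^2}$ bounded but $v_{n,1}(0)\to\infty$ with the sign chosen so that $-\alpha u_1(0)v_{n,1}(0)\to+\infty$; then $\la (A-\beta)\bb U_n,\bb U_n\ra_X\to+\infty$ while $\|\bb U_n\|_X$ stays bounded, so $A-\beta$ is not dissipative in this inner product for any $\beta$.

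This is exactly why the paper renorms $X$ before arguing as you do: with $\|\bb u\|^2_{1,\mu}=\|\bb u'\|^2_{L^2}+\mu^2\|\bb u\|^2_{L^2}+\alpha|u_1(0)|^2$, $\mu^2>-\inf\sigma(H_\alpha)$ (an equivalent norm by \eqref{sigma_inf} and the trace embedding, and with $\mu^2>m^2$), the $\alpha$-term built into $\la\cdot,\cdot\ra_{X,\mu}$ cancels identically against the boundary term from the integration by parts, leaving $\la A\bb U,\bb U\ra_{X,\mu}=(\mu^2-m^2)\la\bb v,\bb u\ra_{L^2(\Gamma)}$, which is trivially bounded by $C\|\bb U\|^2_{X,\mu}$; dissipativity of $A-\beta$ then holds in the equivalent norm, and the constant $M$ in \eqref{est_group} comes from the norm equivalence (your claim $M=1$ in the original norm is not justified). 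Two further omissions: the generation theorem you invoke requires $\overline{\dom(A)}=X$, which is not obvious here and occupies Step 1 of the paper's proof (density of $\dom(H_\alpha)$ in $\mc E(\Gamma)$ for the $H^1$-norm, via the Kirchhoff operator and a cutoff correction of the boundary condition); and dissipativity alone gives injectivity plus the resolvent bound, but you still need the surjectivity computation you sketched, carried out for $\lambda$ large enough that $-\lambda^2-m^2\in\rho(H_\alpha)$.
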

\begin{proof}
Our aim is to apply \cite[Chapter I, Corollary 3.8]{Paz83}. We need to prove density of $\dom(A)$ in $X$.

\textit{Step 1.} Let $(\bb u, \bb v)\in X$. Obviously  there exists a sequence $\{\bb v_n\}_{n=1}^\infty\subset  \mc E(\Gamma)$
such that $\bb v_n\underset{n\to \infty}{\longrightarrow} \bb v$ in $L^2(\Gamma)$ (indeed, $\dom(H_\alpha)\subset \mc E(\Gamma)$ and  $\overline{\dom(H_\alpha)}=L^2(\Gamma)$).  We  need to show that  there exists a sequence  $\{\bb u_n\}_{n=1}^\infty\subset \dom(H_\alpha)$ such that $\bb u_n\underset{n\to \infty}{\longrightarrow} \bb u\in \mc E(\Gamma)$ in $H^1(\Gamma)$.
Consider the following self-adjoint operator $H_0$ in $L^2(\Gamma)$ (with the Kirchhoff conditions)
\begin{equation}\label{Kirchhoff}
\begin{split}
(H_0 \mathbf{w})(x)&=\left(-w_j''(x)\right)_{j=1}^N,\quad x> 0,\quad  \mathbf{w}=(w_j)_{j=1}^N,\\
\dom(H_0)&=\Big\{\mathbf{w}\in H^2(\Gamma): w_1(0)=\ldots=w_N(0),\,\,\sum\limits_{j=1}^N  w_j'(0)=0\Big\}.
\end{split}
\end{equation}
We show that there exists a sequence  $\{\tilde{\bb u}_n\}_{n=1}^\infty\subset \dom(H_0)$ such that $\tilde{\bb u}_n\underset{n\to \infty}{\longrightarrow} \bb u$ in $H^1(\Gamma)$, that is, $\overline{\dom(H_0)}=\mc E(\Gamma).$ It is sufficient to show  that orthogonal complement of $\dom(H_0)$ in $\mc E(\Gamma)$ is $\{0\}$.
 
 Let $\bb z\in \dom(H_0)^\perp$ in $\mc E(\Gamma)$, hence for any $\bb w\in \dom(H_0)$
 \begin{align*}
 &\la\bb w, \bb z\ra_{H^1(\Gamma)}=\la\bb w, \bb z\ra_{L^2(\Gamma)}+\la\bb w', \bb z'\ra_{L^2(\Gamma)}\\&=\la\bb w, \bb z\ra_{L^2(\Gamma)}-\re\Big(\sum\limits_{j=1}^N w'_j(0)\overline z_j(0)\Big)-\la\bb w'',\bb z\ra_{L^2(\Gamma)}=\la-\bb w''+\bb w, \bb z\ra_{L^2(\Gamma)}=0.
 \end{align*}
 The last equality implies  $\bb z\in \ran(H_0+1)^\perp=\ker(H_0+1)=\{0\}$. 
  
  We modify the sequence $\{\tilde{\bb u}_n \}_{n=1}^\infty$ to get another one $\{\bb u_n \}_{n=1}^\infty\subset\dom(H_\alpha)$ that approximates $\bb u$ in $H^1(\Gamma)$. 
  Define the sequence $\{\bs\zeta_n\}_{n=1}^\infty$ by $\bs\zeta_n=(1+\frac{\alpha x}{N}e^{-nx^2})_{j=1}^N$. Let us show that $\{\bb u_n\}_{n=1}^\infty:=\{\bs\zeta_n\tilde{\bb u}_n\}_{n=1}^\infty\subset \dom(H_\alpha)$ and $\bb u_n\underset{n\to \infty}{\longrightarrow} \bb u$ in $H^1(\Gamma).$  
 It is easily seen that 
 $$ u_{jn}(0)=\tilde{u}_{jn}(0),\quad u'_{jn}(0)=\frac{\alpha}{N}\tilde{u}_{jn}(0)+\tilde{u}'_{jn}(0),$$
which yields 
$$\sum\limits_{j=1}^Nu'_{jn}(0)=\sum\limits_{j=1}^N\left(\frac{\alpha}{N}\tilde{u}_{jn}(0)+\tilde{u}'_{jn}(0)\right)=\alpha \tilde{u}_{1n}(0)= \alpha u_{1n}(0).$$
Therefore, $\{\bb u_n\}_{n=1}^\infty\subset\dom(H_\alpha).$
Observing that 
$$\|\bs\zeta_n-\bb 1\|_{L^\infty(\Gamma)}\underset{n\to \infty}{\longrightarrow}0,\qquad \|\bs\zeta'_n\|_{L^\infty(\Gamma)}\underset{n\to \infty}{\longrightarrow}0, $$
we conclude  $\|\bb u_n-\tilde{\bb u}_n\|_{H^1(\Gamma)}\underset{n\to \infty}{\longrightarrow}0$, consequently $\|\bb u_n-\bb u\|_{H^1(\Gamma)}\underset{n\to \infty}{\longrightarrow}0$. Finally, $\|(\bb u_n, \bb v_n)-(\bb u,\bb v)\|_X\underset{n\to \infty}{\longrightarrow}0$.
 
\textit{Step 2.} 
 To prove inequality  $(3.21)$ on the  resolvent $(A-\lambda)^{-1}$ in \cite[Chapter I]{Paz83}, we introduce alternative equivalent  norm on $X$.
It is known that $\inf \sigma(H_\alpha)=\left\{\begin{array}{c}
0,\,\, \alpha\geq 0,\\
-\frac{\alpha^2}{N^2},\,\, \alpha<0.
\end{array}\right.$ See, for example,   Proposition \ref{spect_L_j}  below for the proof of the identity $\sigma_{\ess}(H_\alpha)=[0,\infty).$  The analysis of the discrete spectrum is trivial. 

Given $\mu^2>-\inf \sigma(H_\alpha)$, then  denoting $$\|\bb u\|^2_{1,\mu}:=\|\bb u'\|^2_{L^2(\Gamma)}+\mu^2\|\bb u\|^2_{L^2(\Gamma)}+\alpha|u_1(0)|^2,$$ by the Sobolev embedding and inequality \begin{equation}\label{sigma_inf}
\|\bb u'\|^2_{L^2(\Gamma)}+\alpha|u_1(0)|^2\geq \inf \sigma(H_\alpha)\|\bb u\|_{L^2(\Gamma)},\end{equation}  we get
$$C_1\|\bb u\|_{H^1(\Gamma)}\leq \|\bb u\|_{1,\mu}\leq C_2\|\bb u\|_{H^1(\Gamma)}.$$
Therefore, the quadratic form defined on $X$ by 
$$\|(\bb u,\bb v)\|^2_{X,\mu}=\|\bb u\|^2_{1,\mu}+\|\bb v\|^2_{L^2(\Gamma)}$$
gives a norm on $X$ equivalent to $\|\cdot\|_X$.
 The corresponding inner product is given by 
 $$\la(\bb u_1, \bb v_1), (\bb u_2, \bb v_2)\ra_{X,\mu}=\la\bb u'_1, \bb u'_2\ra_{L^2(\Gamma)}+\mu^2\la\bb u_1, \bb u_2\ra_{L^2(\Gamma)}+\alpha\re(u_{11}(0)\overline{u}_{12}(0))+\la\bb v_1, \bb v_2\ra_{L^2(\Gamma)}.$$

\textit{Step 3.} 
Suppose that $\mu$ is such that $\mu^2> m^2$. 
Let  $\bb U=(\bb u,\bb v)\in \dom(A)$, then
\begin{align*}
&\la A\bb U,\bb U\ra_{X,\mu}=\la\bb v', \bb u'\ra_{L^2(\Gamma)}+\mu^2\la\bb v, \bb u\ra_{L^2(\Gamma)}+\alpha\re(v_1(0)\overline{u}_1(0))+\la\bb u'', \bb v\ra_{L^2(\Gamma)}-m^2\la\bb u, \bb v\ra_{L^2(\Gamma)}\\&=\la\bb v', \bb u'\ra_{L^2(\Gamma)}+(\mu^2-m^2)\la\bb v, \bb u\ra_{L^2(\Gamma)}+\alpha\re(v_1(0)\overline{u}_1(0))-\la\bb u', \bb v'\ra_{L^2(\Gamma)}-\alpha\re(u_1(0)\overline{v}_1(0))\\&=(\mu^2-m^2)\la\bb v, \bb u\ra_{L^2(\Gamma)}.
\end{align*}
Hence $$|\la A\bb U,\bb U\ra_{X,\mu}|=(\mu^2-m^2)|\la\bb v, \bb u\ra_{L^2(\Gamma)}|\leq \frac{\mu^2-m^2}{2}(\|\bb u\|^2_{L^2(\Gamma)}+\|\bb v\|^2_{L^2(\Gamma)}).$$
Observing that 
$$\|\bb U\|^2_{X,\mu}=\|\bb u\|^2_{1,\mu}+\|\bb v\|^2_{L^2(\Gamma)}\geq C^2_1\|\bb u\|^2_{H^1(\Gamma)}+\|\bb v\|^2_{L^2(\Gamma)}\geq C(\|\bb u\|^2_{L^2(\Gamma)}+\|\bb v\|^2_{L^2(\Gamma)}),$$
for $\beta\geq 0$ large enough one gets
$$\la (A-\beta)\bb U,\bb U\ra_{X,\mu}\leq 0.$$
Therefore, by \cite[Proposition 2.4.2]{CazHar98}, the operator $ A-\beta$ is dissipative. By dissipativity, for $\lambda>\beta$ one easily gets
\begin{equation}\label{eq1}
\begin{split}
\|(A-\lambda)\bb U\|^2_{X,\mu}=2(\beta-\lambda)\la(A-\beta)\bb U,\bb U\ra_{X,\mu}&+\|(A-\beta)\bb U\|^2_{X,\mu}+(\lambda-\beta)^2\|\bb U\|^2_{X,\mu}\\&\geq \|(A-\beta)\bb U\|^2_{X,\mu}+(\lambda-\beta)^2\|\bb U\|^2_{X,\mu}.\end{split}\end{equation}
The above inequality implies that $\ker(A-\lambda)=\{0\}$. We show that $ A-\lambda$ is surjective, i.e. $\ran(A-\lambda)= X$.  Let $\bb F=(\bb f,\bb g)\in X.$ We prove that there exists $\bb U=(\bb u,\bb v)\in \dom(A)$ such that $(A-\lambda)\bb U=\bb F$ or equivalently $\left\{\begin{array}{c}
\bb v=\lambda \bb u+\bb f,\qquad\qquad\qquad\quad\\
-H_\alpha\bb u-(m^2+\lambda^2)\bb u=\bb g+\lambda\bb f.
\end{array}\right.
$

It is obvious that for $\lambda>\beta>\sqrt{\mu^2-m^2}$ the  equation  $-H_\alpha\bb u-(m^2+\lambda^2)\bb u=\bb g+\lambda\bb f$ has a unique solution $\bb u=-(H_\alpha+m^2+\lambda^2)^{-1}(\bb g+\lambda\bb f)$ (indeed, $-\lambda^2-m^2\in\rho(H_\alpha)$), and therefore $\bb v=-\lambda(H_\alpha+m^2+\lambda^2)^{-1}(\bb g+\lambda\bb f)+\bb f$. This implies $\ran(A-\lambda)=X,$ and finally, by estimate \eqref{eq1}, the operator   $A-\lambda$ has a bounded everywhere defined inverse, i.e.
$$\|(A-\lambda)^{-1}\|\leq \frac{1}{\lambda-\beta}.$$ In particular, for $\beta$ large enough $(\beta, \infty)\subset\rho (A)$,   and hence $A$ is closed.
Recalling that $\overline{\dom(A)}=X$, by corollary of the Hille-Yosida theorem (see \cite[Chapter I, Corollary 3.8]{Paz83}), $A$ generates a $C_0$ semigroup on $(X,\|\cdot\|_{X,\mu})$. Moreover, for   $t\geq 0$ and $\bb U\in X$, we have 
$$\|e^{tA}\bb U\|_{X,\mu}\leq e^{\beta t}\|\bb U\|_{X,\mu}.$$ By equivalence of the norms, there exists $M>0$ such that 
$$\|e^{tA}\bb U\|_{X}\leq M e^{\beta t}\|\bb U\|_{X}.$$ 
\end{proof}  
 \begin{remark}
 Observe that analogously one might show that for $\beta$ large enough $(\beta, \infty)\subset\rho(-A)$ and $\|(-A-\lambda)^{-1}\|\leq \frac{1}{\lambda-\beta}$, and therefore 
 $$\|e^{-tA}\|\leq M e^{\beta t},\quad t\geq 0,\quad\Longleftrightarrow \quad \|e^{tA}\|\leq M e^{-\beta t},\quad t\leq 0.$$
 Hence $$\|e^{tA}\|\leq M e^{\beta |t|},\quad t\in\mathbb{R}.$$ 
 \end{remark}
 \begin{remark}
 The fact that $\overline{\dom(H_\alpha)}=\mc E(\Gamma)$ can be shown alternatively applying  the Representation Theorem \cite[Chapter VI, Theorem 2.1]{Kat66}. Namely, one can use that $\dom(H_\alpha)$ is a core of the form $$t_\alpha(\bb u, \bb v)=\la\bb u', \bb v'\ra_{L^2(\Gamma)}+\alpha\re(u_1(0) \overline{v_1}(0)), \quad \dom(t_\alpha)=\mc E(\Gamma).$$ 
 \end{remark}

Using Proposition \ref{C_0}, we obtain the following well-posedness theorem.

\begin{theorem}\label{well_X}
Let $p > 1$. Then 
\begin{itemize}
\item[$(i)$] for any $\bb U_0=(\bb u_0,\bb u_1)\in X$ there exists $T >
0$ such that problem \eqref{Cauchy2} has a unique weak solution $\bb U(t)=(\bb u(t),\bb v(t))\in C([0, T], X)$;
\item[$(ii)$]  problem \eqref{Cauchy2} has a maximal solution defined on
an interval of the form  $[0, T_{\max})$, and the following blow-up alternative holds: either $T_{\max} = \infty$ or $T_{\max}<\infty$ and
$$\lim\limits_{t\to T_{\max}}\|\bb U(t)\|_{X} =\infty;$$
\item[$(iii)$]
 for each $T\in (0, T_{\max})$ the mapping
$\bb U_0\in X\mapsto \bb U(t)\in C([0, T], X)$ is continuous;
\item[$(iv)$] the solution $\bb U(t)$ satisfies conservation of charge and energy:
\begin{equation*}\label{conservation-laws}
 E(\bb U(t))= E(\bb U_0),\quad Q(\bb U(t))=Q(\bb U_0)\,\,\,\text{for all}\,\,\,t\in[0, T_{\max}),
\end{equation*}
 where 
\begin{equation*}\label{en_ch}
\begin{split}
 &E(\bb u, \bb v)
=\frac{1}{2}\|\bb u'\|_{L^2(\Gamma)}^2+\frac{\alpha}{2}|u_1(0)|^2+\frac{m^2}{2}\|\bb u\|_{L^2(\Gamma)}^2-\frac{1}{p+1}\|\bb u\|_{L^{p+1}(\Gamma)}^{p+1}+\frac{1}{2}\|\bb v\|_{L^2(\Gamma)}^2,\\
& Q(\bb u, \bb v)=\im\int\limits_{\Gamma}\bb u\overline{\bb v}dx.
\end{split}
\end{equation*}
\end{itemize}
\end{theorem}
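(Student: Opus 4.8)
The plan is to follow the standard semigroup-plus-fixed-point scheme from \cite[Chapter 4]{CazHar98}, using Proposition \ref{C_0} as the linear input. First I would record that the nonlinearity $F(\bb U)=(0,|\bb u|^{p-1}\bb u)$ maps $X$ into $X$ and is locally Lipschitz: since $\bb u\in\mc E(\Gamma)\hookrightarrow H^1(\Gamma)\hookrightarrow L^\infty(\Gamma)$, for $\bb u_1,\bb u_2$ in a ball of radius $R$ in $\mc E(\Gamma)$ one has $\||\bb u_1|^{p-1}\bb u_1-|\bb u_2|^{p-1}\bb u_2\|_{L^2(\Gamma)}\leq C(R)\|\bb u_1-\bb u_2\|_{L^\infty(\Gamma)}\|\bb u_1-\bb u_2\|_{L^2(\Gamma)}^{0}\cdots$, more cleanly $\leq C(R)\|\bb u_1-\bb u_2\|_{\mc E(\Gamma)}$, using the elementary inequality $\big||a|^{p-1}a-|b|^{p-1}b\big|\leq C_p(|a|^{p-1}+|b|^{p-1})|a-b|$ together with the embedding. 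Then for $(i)$ I would set up the integral (mild/weak) formulation
\begin{equation*}
\bb U(t)=e^{tA}\bb U_0+\int_0^t e^{(t-s)A}F(\bb U(s))\,ds
\end{equation*}
and apply the contraction mapping principle in $C([0,T],X)$ on a closed ball around $\bb U_0$, choosing $T=T(\|\bb U_0\|_X)$ small enough that the map is a strict contraction; here \eqref{est_group} controls $\|e^{tA}\|$ on $[0,T]$. Uniqueness in $C([0,T],X)$ follows from the same Lipschitz bound and Gronwall.

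For $(ii)$, the maximal interval $[0,T_{\max})$ is obtained by the usual extension argument: the existence time in $(i)$ depends only on $\|\bb U_0\|_X$, so if $T_{\max}<\infty$ and $\|\bb U(t)\|_X$ stayed bounded along a sequence $t_n\uparrow T_{\max}$, one could restart the solution from $t_n$ and extend past $T_{\max}$, a contradiction; this yields the blow-up alternative $\lim_{t\to T_{\max}}\|\bb U(t)\|_X=\infty$. For $(iii)$, continuous dependence is again Gronwall applied to the difference of two integral equations with data $\bb U_0,\widetilde{\bb U}_0$, combined with the fact that on a fixed $[0,T]\subset[0,T_{\max})$ the solutions stay in a common bounded set.

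For $(iv)$, the conservation laws are proved first for strong solutions, i.e. for $\bb U_0\in\dom(A)$, where $\bb U\in C([0,T],\dom(A))\cap C^1([0,T],X)$ and one may differentiate. A direct computation gives $\frac{d}{dt}E(\bb U(t))=\re\la E'(\bb U(t)),\partial_t\bb U(t)\ra$; writing out $E'(\bb u,\bb v)=(-\bb u''+m^2\bb u-|\bb u|^{p-1}\bb u,\bb v)$ (with the boundary term $\alpha|u_1(0)|^2$ accounting exactly for the $\delta$-condition in $\dom(H_\alpha)$, via the same integration-by-parts identity used in Step~1 of Proposition \ref{C_0}) and using the equation $\partial_t\bb U=A\bb U+F(\bb U)$, the terms cancel and $\frac{d}{dt}E=0$; similarly $\frac{d}{dt}Q(\bb u,\bb v)=\im\int_\Gamma(\partial_t\bb u\,\overline{\bb v}+\bb u\,\overline{\partial_t\bb v})\,dx$ vanishes because the self-adjoint part contributes a real quantity and the gauge-covariant nonlinearity $|\bb u|^{p-1}\bb u$ contributes nothing to the imaginary part. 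Then one passes to general $\bb U_0\in X$ by density of $\dom(A)$ in $X$ together with the continuous dependence in $(iii)$ and continuity of $E,Q:X\to\mathbb{R}$ (the $L^{p+1}$ term being continuous on $\mc E(\Gamma)$ by Sobolev embedding).

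I expect the main obstacle to be purely bookkeeping rather than conceptual: tracking the boundary term $\alpha|u_1(0)|^2$ correctly through the integration by parts in the energy identity, so that it genuinely cancels against the $\delta$-coupling $\sum_j u_j'(0)=\alpha u_1(0)$ hidden in $\dom(H_\alpha)$, and making sure all manipulations in $(iv)$ are first justified on $\dom(A)$ before the density argument. A minor additional point is verifying that $F$ indeed preserves the space $\mc E(\Gamma)$ in the first slot — it does not touch the first component, so this is immediate — and that the regularity $C([0,T],X)$ suffices for the weak-solution notion of \cite{Bal77}, which it does since weak solutions of \eqref{Cauchy2} coincide with mild solutions of the integral equation.
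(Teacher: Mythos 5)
Your proposal is correct and follows essentially the same route as the paper: local Lipschitz continuity of $F$ via the $H^1(\Gamma)\hookrightarrow L^\infty(\Gamma)$ embedding, the contraction mapping argument for the Duhamel formula with the semigroup bound \eqref{est_group}, Gronwall for uniqueness and continuous dependence, the standard restart argument for the blow-up alternative, and conservation laws proved first for data in $\dom(A)$ (using the extra regularity) and then extended to all of $X$ by density and continuous dependence. No substantive differences from the paper's proof.
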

\begin{proof}
$(i)$\,  Firstly, we prove that the nonlinearity $F$ is Lipschitz continuous on bounded sets of $X$, i.e.  
\begin{equation}\label{lip}\|F(\bb U)-F(\bb W)\|_X\leq C(R)\|\bb U-\bb W\|_X,\end{equation}
for  $\bb U, \bb W\in X$, $R>0$ with $\|\bb U\|_X\leq R, \|\bb W\|_X\leq R$.

For $\bb u=(u_j)_{j=1}^N$ and $\bb w=(w_j)_{j=1}^N$ one has 
$$\|u_j|^{p-1}u_j-|w_j|^{p-1}w_j|\leq C(|u_j|^{p-1}+|w_j|^{p-1})|u_j-w_j|,$$
which implies 
\begin{equation*}
\||\bb u|^{p-1}\bb u-|\bb w|^{p-1}\bb w\bb \|_{L^2(\Gamma)}\leq C_1(\|\bb u\|_{L^\infty(\Gamma)}^{p-1}+\|\bb w\|_{L^\infty(\Gamma)}^{p-1})\|\bb u-\bb w\|_{L^2(\Gamma)}.
\end{equation*}
 Therefore, by the  Gagliardo-Nirenberg inequality  (see formula (2.3) in \cite{AdaNoj14}),\begin{equation*}\label{G-N}
\|\bb\Psi\|_{L^p(\Gamma)}\leq C\|\bb\Psi'\|_{L^2(\Gamma)}^{\frac{1}{2}-\frac{1}{p}}\|\bb\Psi\|_{L^2(\Gamma)}^{\frac{1}{2}+\frac{1}{p}},\,\, p\in[2,\infty],\,\, \bb\Psi\in H^1(\Gamma),
\end{equation*}  we have
\begin{equation*}
\||\bb u|^{p-1}\bb u-|\bb w|^{p-1}\bb w\|_{L^2(\Gamma)}\leq C(R)\|\bb u-\bb w\|_{L^2(\Gamma)}.
\end{equation*} 
 Hence for $\bb U=(\bb u,\bb v), \bb W=(\bb w, \bb z)\in X$
\begin{align*}
\|F(\bb U)-F(\bb W)\|_X=\||\bb u|^{p-1}\bb u-|\bb w|^{p-1}\bb w\|_{L^2(\Gamma)}\leq C(R)\|\bb u-\bb w\|_{L^2(\Gamma)}\leq C(R)\|\bb U-\bb W\|_X. 
\end{align*}

 Secondly, we  show the existence of the solution.  Now let $R$ and $T$ be two positive constants to be defined later.
Consider the set
$$X_R:=\{\bb U(t)\in C([0,T], X): \|\bb U(t)\|_X\leq R\},$$
and the metric
$$d(\bb U,\bb V):=\max\limits_{t\in [0,T]}\|\bb U(t)-\bb V(t)\|_X.$$
Observe that $(X_R,d)$ is a complete metric space.
Now we define the map
$$\mc H(\bb U)(t)=e^{tA}\bb U_0+\int\limits_0^te^{(t-s)A}F(\bb U(s))ds.$$
It is obvious  that $\mc H:X_R\mapsto C([0, T], X)$.
We choose $T$ in order to guarantee invariance of $X_R$ for the mapping $\mc H$. By \eqref{est_group} and \eqref{lip}, we get
\begin{equation*}
\|\mc H(\bb U)(t)\|_X\leq \|e^{tA}\bb U_0\|_X+\int\limits_0^t\|e^{(t-s)A}F(\bb U(s))\|_Xds\leq M e^{\beta T}\|\bb U_0\|_{X}+TC(R)M e^{\beta T}R.
\end{equation*}
Let $\frac R{4}= M\|\bb U_0\|_X$. By choosing $T$ small enough (for example, take  $T\leq \min\left\{\frac{\ln 2}{\beta}, \frac{1}{4C(R)M}\right\}$), we get 
$$\|\mc H(\bb U)(t)\|_X\leq R.$$ 
And finally, we need to choose $T$ to get the contraction property of $\mc H$.
For $\bb U,\bb V\in X_R$ one has
\begin{equation*}
\begin{split}
\|\mc H(\bb U)(t)-\mc H(\bb V)(t)\|_X\leq \int\limits_0^t \|e^{(t-s)A}\Big(F(\bb U(s))-F(\bb V(s))\Big)\|_X ds\leq M e^{T\beta}C(R)T d(\bb U,\bb V).
\end{split}
\end{equation*}
It is easily seen that $T$ can be chosen small enough to satisfy $M e^{T\beta}C(R)T< 1$. Thus, the existence of the solution is established by the Banach fixed point theorem.

 Thirdly, we prove the uniqueness of the solution. It follows from Gronwall's lemma. Indeed, suppose that $\bb U_1,\bb U_2\in C([0,T], X)$ are two solutions and $\|\bb U_j(t)\|_X\leq K,\,\, j\in\{1,2\}, t\in[0,T].$ Then
\begin{align*}
\|\bb U_1(t)-\bb U_2(t)\|_X\leq \int\limits_0^t\|e^{(t-s)A}\Big(F(\bb U_1(s))-F(\bb U_2(s))\Big)\|_Xds\leq  M e^{T\beta}C(K)\int\limits_0^t\|\bb U_1(s)-\bb U_2(s)\|_Xds,
\end{align*} hence $\bb U_1(t)=\bb U_2(t),\, t\in[0,T].$ 

$(ii)$\,\,The blow-up alternative  follows  by a bootstrap argument (see \cite[Theorem 4.3.4]{CazHar98}). 

$(iii)$\, Repeating the proof of \cite[Proposition 4.3.7]{CazHar98}, we can show lower semicontinuity of $T:X\to (0, \infty]$  and continuous dependence: if $\bb U^n_0\underset{n\to\infty}{\longrightarrow } \bb U_0$ in $X$ and  $T < T_{\max}$, then $\bb U^n(t)\underset{n\to\infty}{\longrightarrow } \bb U(t)$ in $C([0, T], X)$.

$(iv)$\, Finally, we  show the conservation laws. 
Firstly, observe that, using \cite[Corollary 1.4.41]{CazHar98} and \cite[Proposition 4.1.6]{CazHar98}, one can prove the regularity property: for $\bb U_0\in \dom (A)$,  there exists $T >
0$ such that  problem \eqref{Cauchy2} has a unique solution $\bb U(t)\in C([0, T], \dom(A))\cap C^1([0,T],X)$ (see also \cite[Proposition 4.3.9]{CazHar98}).
Secondly, let us prove that the conservation of charge and energy hold for the solution $\bb U(t)=(\bb u(t), \partial_t\bb u(t))$ with $\bb U_0\in\dom(A).$ Using the regularity property, one  shows that
\begin{align*}\label{deriv_conserv}
&\dfrac{d}{dt}Q(\bb U(t))=\im\int\limits_\Gamma\bb u\overline{\partial^2_{t}\bb u}dx, \\
&\dfrac{d}{dt}E(\bb U(t))=\la\partial_t\bb u,\partial^2_{t}\bb u\ra_{L^2(\Gamma)}+\la H_\alpha\bb u+m^2\bb u-|\bb u|^{p-1}\bb u, \partial_t\bb u\ra_{L^2(\Gamma)}.
\end{align*}  
From \eqref{NKG_graph_ger} we get
$$\dfrac{d}{dt}Q(\bb U(t))=\im\int\limits_\Gamma\bb u\overline{\partial^2_{t}\bb u}dx=\im\left[-\int\limits_\Gamma\bb u\overline{H_\alpha \bb u}dx-m^2\int\limits_\Gamma\bb u\overline{\bb u}dx+\int\limits_\Gamma\bb u\overline{g(\bb u)}dx\right]=0,$$
hence the charge is conserved.

Multiplying \eqref{Cauchy2} by $(\overline {\partial^2_t \bb u}, -\overline{\partial_t\bb u})$ and  integrating over $\Gamma$, we obtain
$$0=\la\partial_t\bb u, {\partial^2_t\bb u}\ra_{L^2(\Gamma)}-\la{\partial^2_t\bb u}, \partial_t\bb u\ra_{L^2(\Gamma)}=\la\partial_t\bb u, {\partial^2_t\bb u}\ra_{L^2(\Gamma)}+\la H_\alpha\bb u+m^2\bb u-|\bb u|^{p-1}\bb u, \partial_t\bb u\ra_{L^2(\Gamma)},$$
therefore, the energy is conserved.

Consider now $\bb U_0\in X$, then there exists a unique solution  $\bb U(t)\in C([0, T], X)$.  Take  $\{\bb U_0^n\}_{n=1}^\infty\subset\dom(A)$ such that $\bb U^n_0\underset{n\to\infty}{\longrightarrow } \bb U_0$ in $X$. By the  regularity property,  $$\bb U^n(t)\in C([0, T^n], \dom(A))\cap C^1([0,T^n],X), \quad T^n<T^n_{\max}.$$ For each $\bb U^n(t)$ the conservation laws hold:
\begin{equation}\label{U^n}
E(\bb U^n(t))= E(\bb U^n_0),\quad Q(\bb U^n(t))=Q(\bb U^n_0), \,\,\, t\in[0, T^n_{\max}).\end{equation}
 By continuous dependence and lower semicontinuity of $T$ we have that $\bb U^n(t)\underset{n\to\infty}{\longrightarrow } \bb U(t)$ in $X$  for any  $0\leq t\leq T< T^n_{\max}$ (as $n$ is sufficiently large). 
Passing to the limit in \eqref{U^n}, we obtain the result.

\end{proof}
\begin{remark}
It is interesting to note that the conservation laws might be alternatively proved using  \cite[Theorem 6.8]{BieGen15}. We need to show that the triple $(X,\dom(A), \mc J),$ where 
$$\mc J: X\rightarrow X^*,\quad \mc J(\bb u, \bb v)=(-\bb v, \bb u),$$
 is a  symplectic Banach triple (see \cite[Definition 6.5]{BieGen15}). It is easily seen that  $\mc J $ is a symplector. 
 In order to apply   \cite[Theorem 6.8]{BieGen15} we need to  prove that  $E,Q\in \mathrm{Dif}(\dom(A),\mc J)$. It means that  $E$ and $Q$ have to be differentiable on $\dom(A)$ and $E'(\bb u, \bb v), Q'(\bb u, \bb v)$  have to belong to $\ran(\mc J)$ for any $(\bb u, \bb v)\in \dom(A)$. 
A simple check  shows that for $(\bb u, \bb v)\in\dom(A)$ one gets 
 \begin{equation}\label{func_deriv}
 E'(\bb u, \bb v)=(-\bb u''+m^2\bb u-|\bb u|^{p-1}\bb u,\bb v),\quad Q'(\bb u, \bb v)=(i\bb v,-i\bb u),
 \end{equation}
and obviously   $E'(\bb u, \bb v), Q'(\bb u, \bb v)\in \ran(\mc J).$

To conclude the proof of the conservation laws we need to observe that 
$\{E,E\}(\bb u, \bb v)=\{Q, E\}(\bb u, \bb v)=0$ for all $(\bb u, \bb v)\in\dom(A)$, where $\{\cdot,\cdot\}$ is the Poisson bracket defined by
$$\{E,E\}(\bb u, \bb v)=\la E'(\bb u, \bb v),\mc J^{-1}E'(\bb u, \bb v)\ra_{X^*\times X}, \quad \{Q, E\}(\bb u, \bb v)=\la Q'(\bb u, \bb v),\mc J^{-1}E'(\bb u, \bb v)\ra_{X^*\times X}. $$
\end{remark}
\begin{remark}
Using definition of $\mc J$ and $E$, we can reformulate the system $\partial_t\bb U(t)=A\bb U(t)+F(\bb U(t))$ in the Hamiltonian form
\begin{equation}\label{Hamil}
\mc J\partial_t\bb U(t)=E'(\bb U(t)).
\end{equation}
\end{remark}
We finish this  section by proving that  problem \eqref{Cauchy2} is well-posed in $X_k$.
\begin{lemma}\label{well_X_k}
Let $k\in \{0,\ldots, N-1\}$. For any $\bb U_0=(\bb u_0,\bb u_1)\in X_k$ there exists $T >
0$ such that \eqref{Cauchy2} has a unique solution $\bb U(t)=(\bb u(t),\bb v(t))\in C([0, T], X_k)$.
\end{lemma}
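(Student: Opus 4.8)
The plan is to transplant the contraction argument from the proof of Theorem~\ref{well_X} into the closed subspace $X_k\subset X$; the only genuinely new point is that the building blocks of the Duhamel formula preserve $X_k$.

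First I would check that $e^{tA}$ leaves $X_k$ invariant. Since $H_\alpha$ acts componentwise and its vertex conditions are symmetric under permutations of the edges, $H_\alpha$ — and hence $(H_\alpha+m^2+\lambda^2)^{-1}$ whenever $-\lambda^2-m^2\in\rho(H_\alpha)$ — maps $L^2_k(\Gamma)$ into itself. From the explicit resolvent formula obtained in Step~3 of the proof of Proposition~\ref{C_0}, namely
$$(A-\lambda)^{-1}(\bb f,\bb g)=\big(-(H_\alpha+m^2+\lambda^2)^{-1}(\bb g+\lambda\bb f),\; -\lambda(H_\alpha+m^2+\lambda^2)^{-1}(\bb g+\lambda\bb f)+\bb f\big),$$
it follows that $(A-\lambda)^{-1}X_k\subset X_k$ for all $\lambda>\beta$ large. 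Because $X_k$ is closed, the exponential formula $e^{tA}\bb U=\lim_{n\to\infty}(I-\tfrac{t}{n}A)^{-n}\bb U$ (valid since $\|e^{tA}\|\le Me^{\beta t}$, and each factor $(I-\tfrac{t}{n}A)^{-1}=-\tfrac{n}{t}(A-\tfrac{n}{t})^{-1}$ preserves $X_k$ for $n$ large) shows $e^{tA}X_k\subset X_k$ for every $t\ge 0$. The nonlinearity $F(\bb u,\bb v)=(0,|\bb u|^{p-1}\bb u)$ clearly preserves $X_k$ as well, since the identities $u_1=\dots=u_k$, $u_{k+1}=\dots=u_N$ are inherited by $|\bb u|^{p-1}\bb u$, and $F$ still satisfies the Lipschitz estimate \eqref{lip} on bounded sets.

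Then I would rerun the fixed point argument of Theorem~\ref{well_X}$(i)$ verbatim in the complete metric space $(X_{k,R},d)$ with $X_{k,R}=\{\bb U\in C([0,T],X_k):\|\bb U(t)\|_X\le R\}$; this set is closed in $(X_R,d)$, hence complete. By the invariance just established, $\mc H(\bb U)(t)=e^{tA}\bb U_0+\int_0^t e^{(t-s)A}F(\bb U(s))\,ds$ maps $X_{k,R}$ into itself, and for $T$ small it is a contraction there — the estimates are exactly those of Theorem~\ref{well_X}$(i)$, which use only \eqref{est_group} and \eqref{lip}. Banach's fixed point theorem then yields a solution $\bb U\in C([0,T],X_k)$ of \eqref{Cauchy2}. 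Uniqueness is free: such a $\bb U$ is in particular a solution in $C([0,T],X)$, and there it is unique by Theorem~\ref{well_X}$(i)$.

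The only step that needs any care is the invariance $e^{tA}X_k\subset X_k$; equivalently, one may phrase it as a symmetry statement. Any permutation $\sigma$ of the edges fixing the sets $\{1,\dots,k\}$ and $\{k+1,\dots,N\}$ induces an isometry $R_\sigma$ of $X$ with $R_\sigma\dom(A)=\dom(A)$, $R_\sigma A=AR_\sigma$ and $R_\sigma F=FR_\sigma$; hence $R_\sigma\bb U(t)$ solves \eqref{Cauchy2} with datum $R_\sigma\bb U_0=\bb U_0$, so by uniqueness $R_\sigma\bb U(t)=\bb U(t)$, and since $X_k$ is the common fixed-point set of these $R_\sigma$ (with $\sigma$ ranging over $S_k\times S_{N-k}$), we get $\bb U(t)\in X_k$. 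Everything else is routine and identical to Section~\ref{sec2}.
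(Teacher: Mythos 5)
Your proposal is correct. Its two routes in fact correspond to the two arguments in the paper: your ``alternative phrasing'' via edge permutations is exactly the paper's own proof (the paper applies the transposition--uniqueness argument to the linear flow to get $e^{tA}X_k\subseteq X_k$, you apply it directly to the nonlinear flow using uniqueness from Theorem \ref{well_X}; both work), while your primary route is essentially the paper's subsequent remark, with two small deviations that are to your advantage. First, you obtain $(H_\alpha+m^2+\lambda^2)^{-1}L^2_k(\Gamma)\subseteq L^2_k(\Gamma)$ abstractly from the permutation symmetry of $H_\alpha$ ($L^2_k(\Gamma)$ being the fixed-point subspace of $S_k\times S_{N-k}$), whereas the paper invokes the explicit resolvent kernel of $H_\alpha$ from \cite{BanIgn14} together with \cite[Lemma 2.3]{AngGol18a}. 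Second, you pass from resolvent invariance to semigroup invariance through the exponential formula $e^{tA}\bb U=\lim_{n\to\infty}\bigl(I-\tfrac tn A\bigr)^{-n}\bb U$, valid for all $\bb U\in X$, which avoids the paper's detour through the Laplace-inversion formula on $\dom(A^2)$ and the density step $\overline{\dom(A^2)\cap X_k}=X_k$. Finally, you spell out what the paper leaves implicit: that the contraction argument of Theorem \ref{well_X} runs verbatim in the complete metric space $X_{k,R}$ (using that $F$ preserves $X_k$ and satisfies \eqref{lip}), and that uniqueness in $C([0,T],X_k)$ follows from uniqueness in $C([0,T],X)$. No gaps; the only cosmetic point is that ``$H_\alpha$ maps $L^2_k(\Gamma)$ into itself'' should be read as the statement that $H_\alpha$ commutes with the permutation operators, so that its resolvent leaves the fixed subspace invariant.
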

\begin{proof}
It is sufficient to prove that  the corresponding $C_0$ semigroup $e^{tA}$ preserves the space $X_k$, that is, $e^{tA}X_k\subseteq X_k$.
Equivalently this fact means that the solution to the Cauchy problem 
\begin{align}\label{Cauchy_linear}
\left\{\begin{array}{c}
\partial_t\bb U(t)=A\bb U(t),\\
\bb U(0)=\bb U_0,\qquad\qquad
\end{array}\right.
\end{align}
belongs to $C([0, T], X_k)$ for $\bb U_0\in X_k$. Suppose that $k\geq 2$. Let $$\bb U(t)=e^{tA}\bb U_0=\Big((u_j(t))_{j=1}^N,(v_j(t))_{j=1}^N\Big)=\Big((u_1(t),\ldots, u_N(t)),(v_1(t),\ldots, v_N(t))\Big)$$  be a solution to \eqref{Cauchy_linear}. Then the function 
$$\bb V(t)=\Big((u_2(t),u_1(t), u_3(t),\ldots, u_N(t)),(v_2(t),v_1(t), v_3(t)\ldots, v_N(t))\Big)$$ is a solution to \eqref{Cauchy_linear} as well. Indeed, the linear equation in \eqref{Cauchy_linear} is invariant under the transposition of two first  elements of the vector solution $\bb U(t)$. By uniqueness $\bb U(t)=\bb V(t),$ therefore, $u_1(t)=u_2(t)$ and $v_1(t)=v_2(t).$ Repeating the process, one gets $u_1(t)=\ldots=u_k(t),\,\, v_1(t)=\ldots=v_k(t).$
  
\end{proof}
\begin{remark} 
The invariance property  $e^{tA}X_k\subseteq X_k$ might be alternatively shown by involving functional calculus. 

By \cite[Chapter I, Corollary 7.5]{Paz83}, for $\bb W\in \dom(A^2)$ we have
\begin{equation}\label{pazy}
e^{tA}\bb W=\frac{-1}{2\pi i}\int\limits_{\gamma-i\infty}^{\gamma+i\infty}e^{\lambda t} (A-\lambda)^{-1}\bb Wd\lambda,
\end{equation}
where $\gamma>\beta$ with $\beta$  from \eqref{est_group}.
Let $\lambda\in (\beta,\infty)$. We have
$$A-\lambda=\left(\begin{array}{cc}
-\lambda Id_{\mc E(\Gamma)} & Id_{L^2(\Gamma)}\\
-H_\alpha-m^2 & -\lambda Id_{L^2(\Gamma)}
\end{array}\right).$$
By a direct computation with operator-valued matrices,
\begin{equation}\label{resolvent}
(A-\lambda)^{-1}=\left(\begin{array}{cc}
-\lambda (H_\alpha+\lambda^2+m^2)^{-1} & -(H_\alpha+\lambda^2+m^2)^{-1}\\
(H_\alpha+m^2)(H_\alpha+\lambda^2+m^2)^{-1}  & -\lambda (H_\alpha+\lambda^2+m^2)^{-1}
\end{array}\right).\end{equation}
Observe that for $\beta$ large enough  $-\lambda^2\in\rho(H_\alpha+m^2).$
Using formula $(17)$ in \cite{BanIgn14} and denoting $z=\sqrt{m^2+\lambda^2}$, we get
\begin{equation*}
((H_\alpha+\lambda^2+m^2)^{-1}\bb w)_j=\tilde{c}_je^{-zx}+\frac{1}{2z}\int\limits_0^\infty w_j(y)e^{-|x-y|z}dy. 
\end{equation*}
Analogously to \cite[Lemma 2.3]{AngGol18a},
if $\bb w\in L^2_k(\Gamma),$ then $\tilde{c}_1=\ldots=\tilde{c}_k$ and $\tilde{c}_{k+1}=\ldots=\tilde{c}_N,$ consequently $(H_\alpha+\lambda^2+m^2)^{-1}\bb w\in L^2_k(\Gamma)$. Hence, by \eqref{resolvent},  $(A-\lambda)^{-1}\bb W\in \dom(A)\cap X_k$ for $\bb W\in X_k$. From \eqref{pazy} we get that $e^{tA}(\dom(A^2)\cap X_k)\subseteq X_k.$ 
By \cite[Chapter I, Theorem 2.7]{Paz83}, $\overline{\dom(A^2)}=X$ which implies   $\overline{\dom(A^2)\cap X_k}=X_k$, therefore, $e^{tA}X_k\subseteq X_k$.\hfill$\square$
 
\end{remark}

\section{Stability properties of standing waves}
In this section we  study stability/instability of the  standing waves $e^{i\omega t}\bs{\varphi}_{k,\omega}^\alpha$, where
 $\bs{\varphi}_{k,\omega}^\alpha$ is defined by \eqref{Phi_k}.
Orbital stability is understood in the following sense. 

\begin{definition}\label{def_stabil}
 The standing wave $\mathbf u(t, x) = e^{i\omega t}\bs{\varphi}(x)$  is said to be \textit{orbitally stable}  if for any $\varepsilon > 0$ there exists $\eta > 0$ with the following property. If $(\mathbf u_0,\bb u_1) \in X$  satisfies  $\|(\mathbf u_0,\bb u_1)-(\bs{\varphi}, i\omega \bs{\varphi})\|_{X} <\eta$,
then the solution $\mathbf U(t)$ of \eqref{Hamil} with $\bb U(0) = (\mathbf u_0,\bb u_1)$ exists globally, and
\[\sup\limits_{t\in [0,\infty)}\inf\limits_{\theta\in\mathbb{R}}\|\bb U(t)-e^{i\theta}(\bs{\varphi}, i\omega \bs{\varphi})\|_{X} < \varepsilon.\]
Otherwise, the standing wave  $\mathbf u(t, x) = e^{i\omega t}\bs{\varphi}(x)$ is said to be \textit{orbitally unstable}.
\end{definition}
In the sequel we will use the notation $\bb \Phi_{k,\omega}^\alpha=(\bs{\varphi}_{k,\omega}^\alpha, i\omega\bs{\varphi}_{k,\omega}^\alpha).$ 
\subsection{Stability approach}
Below we will introduce basic ingredients   of the classical theory by \cite{GrilSha87, GrilSha90} (see also \cite{BieGen15, Stu08}).
The key object is the Lyapunov functional  $S_\omega\in C^2(X,\mathbb{R})$ defined by
$$S_\omega(\bb u,\bb v)=E(\bb u,\bb v)+\omega Q(\bb u,\bb v).$$
From \eqref{func_deriv} one concludes that $\bb \Phi_{k,\omega}^\alpha$ is a critical point of $S_\omega$.
Let $\mc R: X\to X^*$ be the Riesz isomorphism. 
A principal role in the stability/instability study is  played  by the spectral properties of the  operator $\mc R^{-1}S''_\omega(\bb \Phi_{k,\omega}^\alpha): X\to X$. In what follows we will denote $\mc L^\alpha_k:=\mc R^{-1}S''_\omega(\bb \Phi_{k,\omega}^\alpha)$.
  Since $S''_\omega(\bb \Phi_{k,\omega}^\alpha): X\to X^*$ is bounded and symmetric, the operator $ \mc L^\alpha_k: X\to X$ is bounded and self-adjoint, i.e.
$$\la  \mc L^\alpha_k\bb U,\bb V\ra_X=\la S''_\omega(\bb \Phi_{k,\omega}^\alpha)\bb U,\bb V\ra_{X^*\times X}=\la S''_\omega(\bb \Phi_{k,\omega}^\alpha)\bb V, \bb U \ra_{X^*\times X}=\la \bb U, \mc L^\alpha_k\bb V\ra_X, \quad \bb U,\bb V\in X.$$
 Above we also have used  the fact that $X$ is a real Hilbert space. 
  
We consider the following list of assumptions about the spectrum of $\mc L^\alpha_k:$
\begin{itemize}
\item[$(A1)$] $n(\mc L^\alpha_k)=1$;
\item[$(A2)$] $n(\mc L^\alpha_k)=2$;
\item[$(A3)$] $\ker(\mc L^\alpha_k)=\Span\{i\bb \Phi_{k,\omega}^\alpha\}$;
\item[$(A4)$] apart from the non-positive eigenvalues, $\sigma(\mc L^\alpha_k)$ is positive and bounded away from zero.
\end{itemize}
We also define the notion of linear instability.
\begin{definition} The standing wave $e^{i\omega t} \bs{\varphi}_{k,\omega}^\alpha$ is \textit{linearly unstable} if  $\bb 0$ is a linearly unstable solution  for  the linearized  equation
\begin{equation*}\label{lineariz}
\mc J \partial_t \bb V(t)=S''_\omega(\bb \Phi_{k,\omega}^\alpha)\bb V(t)
\end{equation*} 
in the sense of Lyapunov.

\end{definition}
Due to \cite{GrilSha87, GrilSha90}, one can formulate the following stability/istability result.
\begin{theorem}\label{thm_stabil}
Let  assumptions $(A3), (A4)$  be valid, then the following two assertions hold.
\begin{itemize}
\item[$(i)$]  Suppose that  $\partial_\omega Q(\bb \Phi_{k,\omega}^\alpha)|_{\omega=\omega_0}>0$. 

$\bullet$ If, in addition, the assumption $(A1)$ holds, then the standing wave $e^{i\omega_0t} \bs{\varphi}_{k,\omega_0}^\alpha$ is orbitally stable. 

$\bullet$  If, in addition, the assumption $(A2)$ holds, then the standing wave $e^{i\omega_0 t}\bs{\varphi}_{k,\omega_0}^\alpha$ is linearly unstable.  
 
\item[$(ii)$] Suppose that  $\partial_\omega Q(\bb \Phi_{k,\omega}^\alpha)|_{\omega=\omega_0}<0$ and $(A1)$ holds, then the standing wave $e^{i\omega_0 t}\bs{\varphi}_{k,\omega_0}^\alpha$ is orbitally unstable. 
\end{itemize}
\end{theorem}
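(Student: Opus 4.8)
The plan is to derive Theorem \ref{thm_stabil} from the abstract Hamiltonian framework of Grillakis--Shatah--Strauss, using the Lyapunov functional $S_\omega$ and its second variation $S''_\omega(\bb \Phi_{k,\omega}^\alpha)$. First I would verify that the present setup fits the hypotheses of the abstract theory. The phase group $T(\theta)\bb U = e^{i\theta}\bb U$ acts on $X$ (mapping $\mc E(\Gamma)\times L^2(\Gamma)$ into itself) as a one-parameter group of isometries, it commutes with the flow of \eqref{Hamil}, and its generator applied to $\bb \Phi_{k,\omega}^\alpha$ is $i\bb \Phi_{k,\omega}^\alpha$; the standing wave is a bound state in the sense that it is a critical point of $S_\omega$, which follows from \eqref{func_deriv} since $S'_\omega(\bb \Phi_{k,\omega}^\alpha)=E'(\bb \Phi_{k,\omega}^\alpha)+\omega Q'(\bb \Phi_{k,\omega}^\alpha)=0$ is exactly the stationary equation \eqref{H_alpha}. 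I would also note that, by Theorem \ref{well_X}, the Cauchy problem is well-posed in $X$ with conservation of $E$ and $Q$, so the abstract dynamical hypotheses hold.

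Next I would recall the structure of the linearized operator. Assumption $(A3)$ says $\ker(\mc L^\alpha_k)$ is exactly the one-dimensional space spanned by the generator direction $i\bb \Phi_{k,\omega}^\alpha$, i.e. there is no degeneracy beyond the one forced by phase invariance; assumption $(A4)$ gives a spectral gap above zero, so that $\sigma(\mc L^\alpha_k)$ consists of finitely many nonpositive eigenvalues, the kernel, and a positive part bounded away from $0$. Under $(A3)$ and $(A4)$, the abstract theory reduces the stability question to the sign of the scalar quantity
\[
d''(\omega_0) = \partial_\omega Q(\bb \Phi_{k,\omega}^\alpha)\big|_{\omega=\omega_0},
\]
together with the Morse index $n(\mc L^\alpha_k)$. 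Concretely: if $n(\mc L^\alpha_k)=1$ and $d''(\omega_0)>0$, the Vakhitov--Kolokolov-type slope condition of \cite{GrilSha87} is satisfied and the wave is orbitally stable; if $n(\mc L^\alpha_k)=1$ and $d''(\omega_0)<0$, the instability criterion of \cite{GrilSha87} applies and the wave is orbitally unstable; if $n(\mc L^\alpha_k)=2$ and $d''(\omega_0)>0$, then $n(\mc L^\alpha_k) - p(d'')$ is odd (here $p$ counts positive eigenvalues of the $1\times 1$ Hessian of $d$), which by \cite{GrilSha90} yields a linearly exponentially unstable direction for the linearized flow $\mc J\partial_t\bb V = S''_\omega(\bb \Phi_{k,\omega}^\alpha)\bb V$, hence linear instability in the sense of Lyapunov.

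The main structural point to check carefully is that the abstract instability theorem of \cite{GrilSha90} applies in the Klein--Gordon (second-order, non-definite symplectic) setting rather than only in the NLS setting; this is precisely why the excerpt also cites \cite{Stu08} and \cite{BieGen15}, whose symplectic-Banach-triple formulation covers exactly the operator $\mc J(\bb u,\bb v)=(-\bb v,\bb u)$ appearing here. So I would phrase the proof as: (a) identify the triple $(X,\dom(A),\mc J)$ as a symplectic Banach triple and $E,Q$ as admissible conserved quantities (already done in the Remark after Theorem \ref{well_X}); (b) observe $\bb \Phi_{k,\omega}^\alpha$ is a relative equilibrium with one-dimensional symmetry group; (c) invoke $(A3)$, $(A4)$ to get the spectral decomposition of $\mc L^\alpha_k$; (d) apply the stability theorem of \cite{GrilSha87} in case $n=1$ and the instability/linear-instability theorem of \cite{GrilSha90, Stu08} in case $n=2$, reading off the conclusion from the sign of $\partial_\omega Q(\bb \Phi_{k,\omega}^\alpha)|_{\omega_0}$.

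The hard part is not this reduction — which is essentially bookkeeping against the cited abstract theorems — but rather the hypotheses $(A1)$--$(A4)$ themselves, i.e.\ actually computing the Morse index and kernel of $\mc L^\alpha_k$ and establishing the spectral gap. That spectral analysis (via decomposition into the real and imaginary components, reduction to scalar Schr\"odinger operators $L_{1,j}, L_{2,j}$ on the half-line, extension theory and analytic perturbation theory as in \cite{AngGol18, AngGol18a}) is where the real work lies and will be carried out separately; the proof of Theorem \ref{thm_stabil} as stated only needs to cite \cite{GrilSha87, GrilSha90, Stu08} once the abstract setting has been matched.
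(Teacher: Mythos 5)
Your proposal is correct and follows essentially the same route as the paper: the paper offers no separate proof of Theorem \ref{thm_stabil}, presenting it as a direct consequence of the abstract Grillakis--Shatah--Strauss theory (in the form adapted in \cite{Stu08, BieGen15}), with the well-posedness and conservation laws of Theorem \ref{well_X} and the assumptions $(A1)$--$(A4)$ supplying the abstract hypotheses. Your case bookkeeping via $n(\mc L^\alpha_k)$ and the sign of $\partial_\omega Q(\bb \Phi_{k,\omega}^\alpha)|_{\omega=\omega_0}$ is exactly the intended argument.
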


It is standard to verify that for $(\bb u,\bb v)\in X$
\begin{equation*}\label{second_deriv}S''_\omega(\bb \Phi_{k,\omega}^\alpha)(\bb u,\bb v)=\left(\widetilde H_\alpha\bb u+m^2\bb u-(\bs{\varphi}_{k,\omega}^\alpha)^{p-1}\bb u-(p-1)(\bs{\varphi}_{k,\omega}^\alpha)^{p-1}\re(\bb u)+i\omega\bb v, \bb v-i\omega \bb  u\right).\end{equation*}
Here the operator $\widetilde H_\alpha$ is understood in the following sense: since bilinear form $t_\alpha(\bb u_1, \bb u_2)=\la\bb u'_1, \bb u'_2\ra_{L^2(\Gamma)}+\alpha\re(u_{11}(0) \overline{u_{12}}(0))$ is  bounded  on $\EE(\Gamma)$, there exists a unique bounded operator $\widetilde{H}_\alpha: \EE(\Gamma)\to\EE^*(\Gamma)$ such that $t_\alpha(\bb u_1, \bb u_2)=\la\widetilde{H}_\alpha\bb u_1, \bb u_2\ra_{\EE(\Gamma)^*\times \EE(\Gamma)}.$

Commonly   in stability study one substitutes the operator $ \mc L^\alpha_k$ acting on $X$ by the  self-adjoint operator acting in $L^2(\Gamma)\times L^2(\Gamma)$ (with  the real inner product). Namely, this operator is associated (by the Representation Theorem \cite[Chapter VI, Theorem 2.1]{Kat66}) with the closed, densely defined, bounded from below bilinear form  
\begin{equation}\label{bil_form}
\begin{split}&b_k(\bb U,\bb V)=\la S''_\omega(\bb \Phi_{k,\omega}^\alpha)\bb U,\bb V\ra_{X^*\times X}\\
&=\re\left[\alpha u_1(0)\overline{w_1(0)}+\int\limits_\Gamma\Big(\bb u'\overline{\bb w}'+m^2\bb u\overline{\bb w}-(\bs{\varphi}_{k,\omega}^\alpha)^{p-1}\bb u\overline{\bb w}-(p-1)(\bs{\varphi}_{k,\omega}^\alpha)^{p-1}\re({\bb u})\overline{\bb w}\Big)dx\right]\\&+\re\left[\int\limits_\Gamma\bb v\overline{\bb z}dx+
\omega\int\limits_\Gamma (i\bb v\overline{\bb w}-i\bb u\overline{\bb z})dx\right], \quad \bb U=(\bb u,\bb v), \bb V=(\bb w,\bb z)\in X.
\end{split}\end{equation}
\begin{proposition}\label{spec}
The self-adjoint operator associated in $L^2(\Gamma)\times L^2(\Gamma)$ with the bilinear form $b_k(\bb U,\bb V)$ is given by 
\begin{equation*}
\begin{split}
&L^\alpha_k(\bb u,\bb v)=\left(H_\alpha\bb u+m^2\bb u-(\bs{\varphi}_{k,\omega}^\alpha)^{p-1}\bb u-(p-1)(\bs{\varphi}_{k,\omega}^\alpha)^{p-1}\re(\bb u)+i\omega\bb v, \bb v-i\omega \bb  u\right),\\
&\dom(L^\alpha_k)=\dom(H_\alpha)\times L^2(\Gamma).
\end{split}
\end{equation*}
Moreover, $\ker(\mc L^\alpha_k)=\ker(L^\alpha_k)$, \, $n(\mc L^\alpha_k)=n(L^\alpha_k)$,  and  $$\inf \sigma_{\mathrm{ess}}(L^\alpha_k)>0 \,\,\, \Rightarrow\,\,\, \inf \sigma_{\mathrm{ess}}(\mc L^\alpha_k)>0.$$
\end{proposition}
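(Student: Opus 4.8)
The plan is to read off $L^\alpha_k$ from $b_k$ via the Representation Theorem and then compare $L^\alpha_k$ with $\mc L^\alpha_k=\mc R^{-1}S''_\omega(\bb \Phi_{k,\omega}^\alpha)$ through the single quadratic form $b_k$ that both operators carry. First I would verify that $b_k$ is a closed, densely defined, bounded-below sesquilinear form in $L^2(\Gamma)\times L^2(\Gamma)$ with form domain $X$. Since $\bs{\varphi}_{k,\omega}^\alpha\in L^\infty(\Gamma)$ by \eqref{Phi_k} (recall $\sech\le 1$), the terms $(\bs{\varphi}_{k,\omega}^\alpha)^{p-1}\bb u$ and $(\bs{\varphi}_{k,\omega}^\alpha)^{p-1}\re(\bb u)$ are bounded multiplications, the boundary term $\alpha|u_1(0)|^2$ is controlled by the Sobolev trace inequality, and $2\omega Q(\bb u,\bb v)$ by Young's inequality; this produces constants $\kappa,K>0$ with $\kappa\|\bb U\|_X^2\le b_k(\bb U,\bb U)+K\|\bb U\|_{L^2(\Gamma)\times L^2(\Gamma)}^2\le K^{-1}\kappa^{-1}\|\bb U\|_X^2$ for all $\bb U\in X$, so the form-domain norm is equivalent to $\|\cdot\|_X$ and, $X$ being complete, $b_k$ is closed; density of $\dom(H_\alpha)\times L^2(\Gamma)$ gives density of $X$. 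The Representation Theorem \cite[Chapter VI, Theorem 2.1]{Kat66} then yields a self-adjoint operator with form domain $X$, and to identify it I would split $b_k(\bb U,\bb V)$ with $\bb U=(\bb u,\bb v)$, $\bb V=(\bb w,\bb z)$ into its $\bb z$-part, which equals $\la\bb v-i\omega\bb u,\bb z\ra_{L^2(\Gamma)}$ and is always $L^2$-bounded in $\bb z$, and its $\bb w$-part, which is $L^2$-bounded in $\bb w$ exactly when $\bb u\mapsto t_\alpha(\bb u,\bb w)$ is, that is when $\bb u\in\dom(H_\alpha)$; in that case the $\bb w$-part equals $\la H_\alpha\bb u+m^2\bb u-(\bs{\varphi}_{k,\omega}^\alpha)^{p-1}\bb u-(p-1)(\bs{\varphi}_{k,\omega}^\alpha)^{p-1}\re(\bb u)+i\omega\bb v,\bb w\ra_{L^2(\Gamma)}$. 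Hence $\dom(L^\alpha_k)=\dom(H_\alpha)\times L^2(\Gamma)$ and $L^\alpha_k$ has the stated form.

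Next, for the kernels: $\bb U\in\ker(\mc L^\alpha_k)$ iff $S''_\omega(\bb \Phi_{k,\omega}^\alpha)\bb U=0$ in $X^*$, i.e. $b_k(\bb U,\bb V)=0$ for every $\bb V\in X$; but then the functional $\bb V\mapsto b_k(\bb U,\bb V)$ is trivially $L^2(\Gamma)\times L^2(\Gamma)$-bounded, so $\bb U\in\dom(L^\alpha_k)$ with $L^\alpha_k\bb U=0$, and conversely, giving $\ker(\mc L^\alpha_k)=\ker(L^\alpha_k)$. For the Morse index I would use that the bounded self-adjoint operator $\mc L^\alpha_k$ on $X$ and the semibounded self-adjoint operator $L^\alpha_k$ (form domain $X$) carry the same quadratic form on $X$, since $\la\mc L^\alpha_k\bb U,\bb U\ra_X=\la S''_\omega(\bb \Phi_{k,\omega}^\alpha)\bb U,\bb U\ra_{X^*\times X}=b_k(\bb U,\bb U)=\la L^\alpha_k\bb U,\bb U\ra_{L^2(\Gamma)\times L^2(\Gamma)}$; by the variational characterization $n(T)=\sup\{\dim M:\ M\subseteq X\ \text{a subspace},\ b_k(\bb U,\bb U)<0\ \forall\,\bb U\in M\setminus\{0\}\}$, which is valid for both operators, $n(\mc L^\alpha_k)=n(L^\alpha_k)$ (this is the standard substitution used in the GSS framework, cf. \cite{AngGol18,Stu08}).

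The essential-spectrum implication is the step I expect to be the main obstacle, because it has to transfer a spectral property between the two Hilbert-space structures $L^2(\Gamma)\times L^2(\Gamma)$ and $X$, which differ precisely in the $\bb u$-component. Assuming $\inf\sigma_{\mathrm{ess}}(L^\alpha_k)=:2c_0>0$, the min-max principle applied to the semibounded operator $L^\alpha_k$ gives a finite-dimensional subspace $\mc F\subseteq L^2(\Gamma)\times L^2(\Gamma)$ with $b_k(\bb U,\bb U)\ge c_0\|\bb U\|_{L^2(\Gamma)\times L^2(\Gamma)}^2$ for every $\bb U\in X$ orthogonal to $\mc F$ in $L^2(\Gamma)\times L^2(\Gamma)$. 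Combining this with the coercivity estimate $b_k(\bb U,\bb U)\ge\kappa\|\bb U\|_X^2-K\|\bb U\|_{L^2(\Gamma)\times L^2(\Gamma)}^2$ from the first step — add $\theta$ times the former to $(1-\theta)$ times the latter, with $\theta\in(0,1)$ chosen so that $\theta c_0>(1-\theta)K$ — one gets $c'>0$ with $\la\mc L^\alpha_k\bb U,\bb U\ra_X=b_k(\bb U,\bb U)\ge c'\|\bb U\|_X^2$ on $Y:=\{\bb U\in X:\ \bb U\perp\mc F\ \text{in}\ L^2(\Gamma)\times L^2(\Gamma)\}$, a subspace of finite codimension in $X$ because $X\hookrightarrow L^2(\Gamma)\times L^2(\Gamma)$ continuously. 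Applying the min-max principle to the bounded self-adjoint operator $\mc L^\alpha_k$ with the finite-dimensional test space $Y^\perp$ (orthogonal complement in $X$) then yields $\inf\sigma_{\mathrm{ess}}(\mc L^\alpha_k)\ge c'>0$, which completes the proof. A slicker alternative for this last step, worth mentioning, is to observe that $S''_\omega(\bb \Phi_{k,\omega}^\alpha)$ differs from its constant-coefficient part by the multiplication by $(\bs{\varphi}_{k,\omega}^\alpha)^{p-1}$, which (being continuous and vanishing at infinity, indeed exponentially decaying) is compact as a map $\EE(\Gamma)\to\EE^*(\Gamma)$, together with the rank-one boundary term; Weyl's theorem then reduces the claim to a direct computation of $\inf\sigma_{\mathrm{ess}}$ of the two free operators, but the min-max argument above has the advantage of using the hypothesis exactly as stated.
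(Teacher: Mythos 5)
Your proposal is correct, and for the second half it takes a genuinely more self-contained route than the paper. The paper, after identifying $L^\alpha_k$ via the Representation Theorem, disposes of the three claims ($\ker$, $n$, essential spectrum) in one stroke by invoking \cite[Lemma 5.4]{Stu08}, so that the only thing it actually verifies is the G\r{a}rding-type inequality \eqref{Garding}; you instead prove the three claims directly — the kernel identity by observing that $b_k(\bb U,\cdot)\equiv 0$ is trivially $L^2$-continuous, the equality of Morse indices through the shared quadratic form and the maximal-negative-subspace characterization, and the essential-spectrum implication by a min-max argument that transports coercivity from an $L^2$-orthogonal complement of a finite-dimensional spectral subspace of $L^\alpha_k$ to a finite-codimension subspace of $X$. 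The key input is the same in both treatments (your coercivity estimate $b_k(\bb U,\bb U)\ge\kappa\|\bb U\|^2_X-K\|\bb U\|^2_{L^2(\Gamma)\times L^2(\Gamma)}$ is exactly the paper's inequality \eqref{Garding}); what your version buys is an explicit, citation-free mechanism, at the cost of length, while the paper's citation covers all three statements at once. One thin spot in your first half: the assertion that the $\bb w$-part of $b_k(\bb U,\cdot)$ is $L^2$-bounded ``exactly when $\bb u\in\dom(H_\alpha)$'' presupposes that the self-adjoint operator associated with the form $t_\alpha$ is $H_\alpha$; this identification is precisely what the paper proves by hand (testing first against $\bb V=(\bb w,\bb 0)$ with $\bb w\in C_0^\infty(\Gamma)$ to get $\hat{\bb u}\in H^2(\Gamma)$, then against $\bb w\in\mc E(\Gamma)$ with $w_1(0)\neq 0$ to force $\alpha\hat u_1(0)=\sum_{j=1}^N\hat u_j'(0)$), so you should either cite that standard fact for the $\delta$-vertex form on a star graph or include the same integration-by-parts argument. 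Your closing ``slicker alternative'' (compactness plus Weyl) is sketchier — the boundary term is only form-bounded, not a rank-one $L^2$-perturbation — but since you do not rely on it, this does not affect the proof.
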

\begin{proof}
Denote by $\mc T_k$ the self-adjoint operator associated with the bilinear form $b_k(\bb U,\bb V)$. Then 
\begin{align*}
&\dom(\mc T_k)=\left\{\bb U\in X: \exists \bb W\in L^2(\Gamma)\times L^2(\Gamma)\,\, s. t.\,\, \forall\bb V\in X,\, \,b_k(\bb U,\bb V)=\la\bb W,\bb V\ra_{L^2(\Gamma)\times L^2(\Gamma)} \right\},\\
&\mc T_k\bb U=\bb W.
\end{align*}
It is easily seen that $\dom(L^\alpha_k)\subseteq \dom(\mc T_k)$ and $L^\alpha_k\bb U=\mc T_k\bb U$ for $\bb U\in \dom(L^\alpha_k).$ Indeed, $b_k(\bb U,\bb V)=\la L^\alpha_k\bb U,\bb V\ra_{L^2(\Gamma)\times L^2(\Gamma)}$  for all $\bb U\in \dom(L^\alpha_k)$ and $\bb V\in X$. We need to show that  $\dom(\mc T_k)\subseteq \dom(L^\alpha_k)$. Let $\bb {\hat U}=(\bb{\hat u},\bb{\hat v})=\left((\hat u_j)_{j=1}^N, (\hat v_j)_{j=1}^N\right)\in \dom(\mc T_k)$. Observe that, by the definition of $\dom(\mc T_k)$,  the functional $f_k(\bb V)=b_k(\bb {\hat U},\bb V)=\la\bb{\hat W},\bb V\ra_{L^2(\Gamma)\times L^2(\Gamma)}$ is linear and continuous on $L^2(\Gamma)\times L^2(\Gamma)$. 
Given $\bb V=(\bb w, \bb 0)$ with $\bb w=(w_j)_{j=1}^N\in C_0^\infty(\Gamma)=\bigoplus\limits_{j=1}^N C_0^\infty(\mathbb{R_+})$, then integrating by parts in \eqref{bil_form} and using  continuity of $f_k$, one gets that $\bb {\hat u}\in H^2(\Gamma).$ 
Finally, observing  $\bb{\hat u}\in H^2(\Gamma)$, integrating by parts in \eqref{bil_form} with $\bb w\in \mc E(\Gamma)$ such that $w_1(0)\neq 0$, and using continuity of $f_k$  again, we arrive at the conclusion that
$$g_k(\bb V)=\re\left[\Big(\alpha \hat u_1(0)-\sum\limits_{j=1}^N\hat u'_j(0)\Big)\overline{w_1(0)}\right]$$
has to be a continuous functional on $L^2(\Gamma)\times L^2(\Gamma)$. This is true only if $\alpha \hat u_1(0)-\sum\limits_{j=1}^N\hat u'_j(0)=0$, therefore, $\bb {\hat u}\in\dom(H_\alpha)$ and $\bb{\hat U}\in\dom(L^\alpha_k).$

The second part of the proposition follows by \cite[Lemma 5.4]{Stu08}. To apply  Lemma 5.4, we  only need to prove that  inequality $(G)$  (G\r{a}rding's-type inequality) holds, that is,  there exist $\varepsilon, C>0$ such that 
\begin{equation}\label{Garding} 
\la S''_\omega(\bb \Phi_{k,\omega}^\alpha)\bb V,\bb V\ra_{X^*\times X}\geq\varepsilon \|\bb V\|^2_X-C\|\bb V\|^2_{L^2(\Gamma)\times L^2(\Gamma)},\quad \bb V=(\bb w, \bb z)\in X.
\end{equation}
From \eqref{bil_form} we get
\begin{equation}\label{est_1}
\begin{split}
\la S''_\omega(\bb \Phi_{k,\omega}^\alpha)\bb V,\bb V\ra_{X^*\times X}
&=\alpha|w_1(0)|^2+\int\limits_\Gamma\Big(|\bb w'|^2+m^2|\bb w|^2-(\bs{\varphi}_{k,\omega}^\alpha)^{p-1}|\bb w|^2-(p-1)(\bs{\varphi}_{k,\omega}^\alpha)^{p-1}(\re{\bb w})^2\Big)dx+ \int\limits_\Gamma|\bb z|^2dx\\&-2
\omega\int\limits_\Gamma\im (\bb z\overline{\bb w})dx.
\end{split}\end{equation}
Moreover, by  \eqref{sigma_inf},  we deduce
 $$\alpha|w_1(0)|^2+\frac{1}{2}\int\limits_\Gamma|\bb w'|^2dx=\frac{1}{2}\Big(2\alpha|w_1(0)|^2+\int\limits_\Gamma|\bb w'|^2dx\Big)\geq \frac{-2\alpha^2}{N^2}\|\bb w\|_{L^2(\Gamma)}.$$
Denoting $M=\|\bs{\varphi}_{k,\omega}^\alpha\|_{L^\infty(\Gamma)}$, we obtain from \eqref{est_1}
\begin{align*}
&\la S''_\omega(\bb \Phi_{k,\omega}^\alpha)\bb V,\bb V\ra_{X^*\times X}\geq\int\limits_\Gamma\Big(\frac{1}{2}|\bb w'|^2+m^2|\bb w|^2+|\bb z|^2\Big)dx-\int\limits_\Gamma\left(pM^{p-1}|\bb w|^2-2\omega|\bb z\|\bb w|\right)dx-\frac{2\alpha^2}{N^2}\|\bb w\|_{L^2(\Gamma)} \\&\geq \min\{\frac{1}{2}, m^2\}\left(\|\bb w\|^2_{H^1(\Gamma)}+\|\bb z\|^2_{L^2(\Gamma)}\right)-\Big(pM^{p-1}+\frac{2\alpha^2}{N^2}+|\omega|\Big)\left(\|\bb w\|^2_{L^2(\Gamma)}+\|\bb z\|^2_{L^2(\Gamma)}\right),
\end{align*}
and therefore \eqref{Garding} holds.
\end{proof}
\subsection{Spectral properties of $L^\alpha_k$}
Another standard step in the stability study  is to express the operator $L^\alpha_k$ in more convenient form using two operators acting on real-valued functions. Let $\bb U=(\bb u,\bb v)\in \dom(L^\alpha_k)$  and $\bb u=\bb u_1+i\bb u_2, \,\bb v=\bb v_1+i\bb v_2$ with real-valued vector functions $\bb u_j, \bb v_j,\, j\in\{1,2\}.$
We have 
\begin{align*}
&L^\alpha_k(\bb u,\bb v)=\left(\begin{array}{cc} -\dfrac{d^2}{dx^2}+m^2-(\bs{\varphi}_{k,\omega}^\alpha)^{p-1}-(p-1)(\bs{\varphi}_{k,\omega}^\alpha)^{p-1}\re(\cdot)& i\omega \\
-i\omega  & 1
\end{array}\right)\left(\begin{array}{c}\bb u_1+i\bb u_2\\
\bb v_1+i\bb v_2
\end{array}\right)\\&=\left(\begin{array}{c}\left(-\bb u''_1+m^2\bb u_1-p(\bs{\varphi}_{k,\omega}^\alpha)^{p-1}\bb u_1-\omega\bb v_2\right)+i\left(-\bb u''_2+m^2\bb u_2-(\bs{\varphi}_{k,\omega}^\alpha)^{p-1}\bb u_2+\omega\bb v_1\right)\\
\left(\omega \bb u_2+\bb v_1\right)+i\left(-\omega\bb u_1+\bb v_2\right)
\end{array}\right).
\end{align*}
Substituting the  complex-valued vector function $\bb U=(\bb u,\bb v)$ by the corresponding quadruplet of real-valued functions $(\bb u_1, \bb u_2, \bb v_1, \bb v_2),$  and substituting  $L^\alpha_k(\bb u,\bb v)=(\bb f,\bb g)=(\bb f_1+i\bb f_2, \bb g_1+i\bb g_2)\in L^2(\Gamma)\times L^2(\Gamma)$ by the quadruplet $(\bb f_1, \bb f_2, \bb g_1, \bb g_2)$, we can interpret the operator $L^\alpha_k$  as 
\begin{align}\label{repres}
\left(\begin{array}{c}
\bb f_1\\
\bb f_2\\
\bb g_1\\
\bb g_2
\end{array}\right)=\left(\begin{array}{cccc}
L_{1,k}^\alpha +\omega^2& 0 & 0 & -\omega\\
0 & L_{2,k}^\alpha +\omega^2 & \omega & 0\\
0 & \omega & 1 & 0\\
-\omega & 0 & 0 & 1 
\end{array}\right)\left(\begin{array}{c}
\bb u_1\\
\bb u_2\\
\bb v_1\\
\bb v_2
\end{array}\right),
\end{align} 
where 
\begin{equation*}
\begin{split}
&L_{1,k}^\alpha\bb u=-\bb u''+(m^2-\omega^2)\bb u-p(\bs{\varphi}_{k,\omega}^\alpha)^{p-1}\bb u,\\
&L_{2,k}^\alpha\bb u=-\bb u''+(m^2-\omega^2)\bb u-(\bs{\varphi}_{k,\omega}^\alpha)^{p-1}\bb u,\quad \dom(L_{j,k}^\alpha)=\dom(H_\alpha),\, j\in\{1,2\}.
\end{split}
\end{equation*}
\begin{remark}\label{ker_L} From \eqref{repres} we deduce $$(\bb u,\bb v)=(\bb u_1+i\bb u_2, \bb v_1+i\bb v_2)\in \ker(L^\alpha_k)\quad \Longleftrightarrow\quad\left\{\begin{array}{c}
\bb u_1\in \ker(L_{1,k}^\alpha),\\
\bb u_2\in \ker(L_{2,k}^\alpha),\\
\bb v_1=-\omega \bb u_2,\,\,\quad\,\\
\bb v_2=\omega\bb u_1.\qquad\,
\end{array}\right.$$
\end{remark}

Assuming that  the operators $L_{1,k}^\alpha$ and $L_{2,k}^\alpha$ act as usual on  $L^2(\Gamma, \mathbb{C})$ with the usual complex structure, one can prove the following result on the relation between their spectra and  the spectrum of $L^\alpha_k$.
\begin{proposition}\label{spect_L_j}
Let $\lambda\in\mathbb{R}\setminus\{1\}$ and $\mu(\lambda):=\lambda+\frac{\lambda\omega^2}{1-\lambda}$.  Assume also that $k\in\left\{0,\ldots,\left[\tfrac{N-1}{2}\right]\right\}$ and  $m^2-\omega^2>\tfrac{\alpha^2}{(N-2k)^2}$. Then the following assertions hold.
\begin{itemize}
\item[$(i)$] $\lambda\in \sigma(L^\alpha_k)\quad \Longleftrightarrow\quad\mu(\lambda)\in \sigma(L_{1,k}^\alpha)\cup \sigma(L_{2,k}^\alpha)$.

\item[$(ii)$] $\dim(\ker(L^\alpha_k-\lambda))=\dim(\ker(L_{1,k}^\alpha-\mu(\lambda)))+\dim(\ker(L_{2,k}^\alpha-\mu(\lambda)))$, 
consequently $n(L^\alpha_k)=n(L_{1,k}^\alpha)+n(L_{2,k}^\alpha)$.
\item[$(iii)$] $\lambda\in \sigma_{\ess}(L^\alpha_k)\quad\Longleftrightarrow\quad\mu(\lambda)\in \sigma_{\ess}(L_{1,k}^\alpha)\cup \sigma_{\ess}(L_{2,k}^\alpha)$.

\item[$(iv)$] Let $\mu_-(\lambda)$ and  $\mu_+(\lambda)$ be the restrictions of $\mu(\lambda)$ to $(-\infty, 1)$ and $(1, \infty)$ respectively. Then 
\begin{equation}\label{inverse} \sigma_{\ess}(L^\alpha_k)\setminus\{1\}=\mu_-^{-1}\left(\sigma_{\ess}(L_{1,k}^\alpha)\cup \sigma_{\ess}(L_{2,k}^\alpha)\right)\cup \mu_+^{-1}\left(\sigma_{\ess}(L_{1,k}^\alpha)\cup \sigma_{\ess}(L_{2,k}^\alpha)\right).
\end{equation}

\item[$(v)$]  $\sigma_{\ess}(L_{1,k}^\alpha)\cup \sigma_{\ess}(L_{2,k}^\alpha)=\sigma_{\ess}(L_{1,k}^\alpha)=\sigma_{\ess}(L_{2,k}^\alpha)=[m^2-\omega^2,\infty)$.

\item[$(vi)$] $ \sigma_{\ess}(L^\alpha_k)=[\sigma_1,1]\cup[\sigma_2,\infty)$,
where $\sigma_1=\mu_-^{-1}\left(m^2-\omega^2\right)\in (0,1)$ and $\sigma_2=\mu_+^{-1}\left(m^2-\omega^2\right)\in (1,\infty).$

\end{itemize}
\end{proposition}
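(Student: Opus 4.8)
The plan is to dismantle the $4\times4$ operator matrix in \eqref{repres} by elementary manipulations, reducing everything to the scalar Schr\"odinger operators $L_{1,k}^\alpha$, $L_{2,k}^\alpha$. First I would permute the four real components, $(\bb u_1,\bb u_2,\bb v_1,\bb v_2)\mapsto(\bb u_1,\bb v_2,\bb u_2,\bb v_1)$; this unitary of $L^2(\Gamma)^4$ conjugates $L^\alpha_k$ into the block-diagonal operator $B_1\oplus B_2$, where
$$B_1=\left(\begin{array}{cc}L_{1,k}^\alpha+\omega^2 & -\omega\\ -\omega & 1\end{array}\right),\qquad B_2=\left(\begin{array}{cc}L_{2,k}^\alpha+\omega^2 & \omega\\ \omega & 1\end{array}\right),$$
each with domain $\dom(H_\alpha)\times L^2(\Gamma)$. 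Hence $\sigma(L^\alpha_k)=\sigma(B_1)\cup\sigma(B_2)$, similarly for the essential spectra, and $\ker(L^\alpha_k-\lambda)$ splits accordingly. For $\lambda\neq1$ the scalar entry $1-\lambda$ is invertible, so the Schur (Feshbach) factorization yields
$$B_j-\lambda=E_j\left(\begin{array}{cc}L_{j,k}^\alpha-\mu(\lambda) & 0\\ 0 & (1-\lambda)I\end{array}\right)F_j,$$
where $E_j$, $F_j$ are bounded, boundedly invertible, domain-preserving triangular-with-unit-diagonal operators on $L^2(\Gamma)\times L^2(\Gamma)$, and the Schur complement is $L_{j,k}^\alpha+\omega^2-\lambda-\tfrac{\omega^2}{1-\lambda}=L_{j,k}^\alpha-\mu(\lambda)$. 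Since $E_j$, $F_j$ are isomorphisms, $B_j-\lambda$ is boundedly invertible (resp.\ injective, resp.\ Fredholm) exactly when $L_{j,k}^\alpha-\mu(\lambda)$ is. Taking complements gives $(i)$; reading off $\ker(B_j-\lambda)=\{(\bb w,\tfrac{\pm\omega}{1-\lambda}\bb w):\bb w\in\ker(L_{j,k}^\alpha-\mu(\lambda))\}$, which at $\lambda=0$ matches Remark~\ref{ker_L}, and summing over $j$ gives the dimension identity in $(ii)$; and stability of the Fredholm property under multiplication by isomorphisms gives $(iii)$.

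Next I would analyze the rational map $\mu$. Since $\mu'(\lambda)=1+\omega^2(1-\lambda)^{-2}>0$, $\mu$ is strictly increasing on each of $(-\infty,1)$ and $(1,\infty)$; since $\mu(\lambda)\to\mp\infty$ as $\lambda\to1^\pm$ (here $\omega\neq0$) while $\mu(\lambda)\sim\lambda$ at $\pm\infty$, the branches $\mu_-$, $\mu_+$ are increasing bijections of $(-\infty,1)$, $(1,\infty)$ onto all of $\mathbb{R}$; rewriting $(iii)$ through these inverses is precisely $(iv)$. For the Morse-index claim in $(ii)$ I note $\mu(0)=0$, so $\mu$ maps $(-\infty,0)$ bijectively onto $(-\infty,0)$; combined with $(iii)$ and $(v)$ this gives $\sigma_{\ess}(L^\alpha_k)\cap(-\infty,0)=\emptyset$, so all negative spectrum of $L^\alpha_k$, $L_{1,k}^\alpha$, $L_{2,k}^\alpha$ is purely discrete, and the eigenvalue correspondence $\lambda\leftrightarrow\mu(\lambda)$ together with the multiplicity identity yields $n(L^\alpha_k)=n(L_{1,k}^\alpha)+n(L_{2,k}^\alpha)$.

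It remains to compute the essential spectra. I would prove $\sigma_{\ess}(H_\alpha)=[0,\infty)$ by the decomposition (Glazman) method: imposing a Dirichlet condition at $x=1$ on each of the $N$ edges changes $H_\alpha$ by a perturbation of finite rank in the resolvent sense, and the resulting operator decouples into a compact-resolvent part on the bounded interior and $N$ copies of the Dirichlet Laplacian on $L^2((1,\infty))$, so $\sigma_{\ess}(H_\alpha)=[0,\infty)$. Since each component of $\bs{\varphi}_{k,\omega}^\alpha$ decays exponentially on its half-line, multiplication by $(\bs{\varphi}_{k,\omega}^\alpha)^{p-1}$ is $H_\alpha$-compact (standard truncation/Rellich argument), whence $\sigma_{\ess}(L_{j,k}^\alpha)=\sigma_{\ess}\big(H_\alpha+(m^2-\omega^2)\big)=[m^2-\omega^2,\infty)$ for $j=1,2$, which is $(v)$. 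Finally, plugging $\sigma_{\ess}(L_{1,k}^\alpha)\cup\sigma_{\ess}(L_{2,k}^\alpha)=[m^2-\omega^2,\infty)$ into $(iv)$ and using that $\mu_-$, $\mu_+$ are increasing bijections onto $\mathbb{R}$ gives $\sigma_{\ess}(L^\alpha_k)\setminus\{1\}=[\sigma_1,1)\cup[\sigma_2,\infty)$ with $\sigma_1=\mu_-^{-1}(m^2-\omega^2)$ and $\sigma_2=\mu_+^{-1}(m^2-\omega^2)$; closedness of $\sigma_{\ess}$ restores the point $1$, giving $(vi)$, and the bounds $\sigma_1\in(0,1)$, $\sigma_2\in(1,\infty)$ follow since $\mu_-^{-1}$, $\mu_+^{-1}$ take values in $(-\infty,1)$, $(1,\infty)$ and $m^2-\omega^2>\tfrac{\alpha^2}{(N-2k)^2}\geq0=\mu(0)$ forces $\sigma_1>\mu_-^{-1}(0)=0$.

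The routine core --- the block factorization and the one-variable analysis of $\mu$ --- is not the difficulty. The genuinely substantive step is the identification $\sigma_{\ess}(H_\alpha)=[0,\infty)$, propagated through the exponentially small nonlinear potential by relative compactness, together with the bookkeeping at the pole $\lambda=1$ of $\mu$: this pole is exactly what forces the exclusion of $1$ in $(i)$--$(iii)$ and its reappearance, via closedness of $\sigma_{\ess}$, in $(vi)$.
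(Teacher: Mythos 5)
Your proposal is correct, and for items $(i)$--$(iv)$ and $(vi)$ it follows essentially the paper's own route: the Schur (Feshbach) factorization of your two $2\times 2$ blocks is exactly the elimination of $\bb v_1,\bb v_2$ that the paper performs (quoting \cite[Proposition 4.5]{CsoGen18}), with the same Schur complement $L_{j,k}^\alpha-\mu(\lambda)$; packaging it as conjugation by bounded, boundedly invertible triangular factors is a tidy way to obtain the resolvent, kernel and Fredholm statements simultaneously, and your monotonicity analysis of $\mu_\pm$ and the closedness argument restoring the point $1$ in $(vi)$ coincide with the paper's. The only genuine divergence is item $(v)$: you prove $\sigma_{\ess}(H_\alpha)=[0,\infty)$ by Glazman decoupling at the interior points $x=1$ of the edges (finite-rank resolvent perturbation, compact resolvent on the truncated star, $N$ Dirichlet half-lines), whereas the paper decouples at the vertex, observing that $H_\alpha$ and the vertex-Dirichlet operator $H_\infty$ are self-adjoint extensions of a common symmetric operator with deficiency indices $(1,1)$, so Krein's formula gives a rank-one resolvent difference and $H_\infty$ is manifestly a direct sum of half-line Dirichlet operators (a Weyl-sequence argument is also sketched in a remark). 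Both variants then invoke Weyl's theorem and the relative compactness of multiplication by the exponentially decaying $(\bs\varphi_{k,\omega}^\alpha)^{p-1}$; yours trades the extension-theory bookkeeping at the vertex for the easy observation that the interior part has compact resolvent, and either suffices. Your parenthetical caveat $\omega\neq 0$ (needed for $\mu_\pm$ to be onto $\mathbb{R}$, hence for the formulation of $(vi)$) is a point the paper leaves implicit, but it is harmless in the application since the stability theorems only use $\omega\neq 0$.
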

\begin{proof}
The proof of items $(i)-(iii)$ repeats the proof  of \cite[Proposition 4.5]{CsoGen18} (one just needs to substitute the operator $\mathcal{L}_\beta$ by $L_k^\alpha$,  the operator $L_\beta^-$  by $L_{2,k}^\alpha$, and the  operator $L_\beta^+$ by $L_{1,k}^\alpha$). The key point is that for $\lambda\neq 1$,
\begin{align*}
(L^\alpha_k-\lambda)(\bb u,\bb v)=(\bb f, \bb g)\quad &\Longleftrightarrow \quad \left\{\begin{array}{c}
(L_{1,k}^\alpha+\omega^2-\lambda)\bb u_1-\omega \bb v_2 = \bb f_1,\\ 
(L_{2,k}^\alpha+\omega^2-\lambda)\bb u_2+\omega \bb v_1 = \bb f_2,\\
\qquad\qquad(1-\lambda)\bb v_1+\omega \bb u_2 = \bb g_1,\\
\qquad\qquad(1-\lambda)\bb v_2-\omega \bb u_2 = \bb g_2.
\end{array} \right.\\
&\Longleftrightarrow \quad \left\{\begin{array}{c}
(L_{1,k}^\alpha-\mu(\lambda))\bb u_1 = \bb f_1+\frac{\omega}{1-\lambda}\bb g_2,\\ 
(L_{2,k}^\alpha-\mu(\lambda))\bb u_2 = \bb f_2-\frac{\omega}{1-\lambda}\bb g_1,\\
\quad\qquad\qquad\bb v_1=\frac{1}{1-\lambda}(\bb g_1-\omega\bb u_2),\\
\quad\qquad\qquad\bb v_2=\frac{1}{1-\lambda}(\bb g_2+\omega\bb u_1).
\end{array} \right.
\end{align*}
Item $(iv)$ follows from $(iii)$ and the fact that $\mu_-$ and $\mu_+$ are increasing bijections. 

 Item $(v)$ seems  natural, but we didn't manage to find its proof in the literature.  Firstly, consider the self-adjoint operator 
  \begin{equation*}
\begin{split}
(H_{\infty}\mathbf{v})(x)&=\left(-v_j''(x)\right)_{j=1}^N,\quad x> 0,\quad  \mathbf{v}=(v_j)_{j=1}^N,\\
\dom(H_\infty)&=\Big\{\mathbf{v}\in H^2(\Gamma): v_1(0)=\ldots=v_N(0)=0\Big\}.
\end{split}
\end{equation*}
Observe that $H_\infty=\bigoplus\limits_{j=1}^Nh_\infty,$
where \begin{equation*}
\begin{split}
(h_{\infty}{v})(x)=-v''(x),\quad x> 0,\quad
\dom(h_\infty)=\Big\{{v}\in H^2(\mathbb{R}_+): v(0)=0\Big\}.
\end{split}
\end{equation*}
 Therefore, $\sigma_{\ess}(H_\infty)=\sigma_{\ess}(h_\infty)=[0,\infty).$
Secondly, notice that  the operator $H_\alpha$ defined  by \eqref{D_alpha} and the operator $H_\infty$ are self-adjoint extensions of the symmetric operator 
 \begin{equation*}
\begin{split}
(\tilde H_0\mathbf{v})(x)&=\left(-v_j''(x)\right)_{j=1}^N,\quad x> 0,\quad  \mathbf{v}=(v_j)_{j=1}^N,\\
\dom(\tilde H_0)&=\Big\{\mathbf{v}\in H^2(\Gamma): v_1(0)=\ldots=v_N(0)=0,\,\,\sum\limits_{j=1}^N  v_j'(0)=0\Big\}.
\end{split}
\end{equation*}
The operator has equal deficiency indices $n_{\pm}(\tilde H_0)=1$  (see \cite[proof of Theorem 3.5-$(iii)$]{AngGol18a}), therefore, by Krein's resolvent formula, the operator $(H_\alpha-\lambda)^{-1}-(H_\infty-\lambda)^{-1},\,\,\lambda\in\rho(H_\infty)\cap\rho(H_\alpha),$ is of rank one (see \cite[Appendix A, Theorem A.2]{AlbGes05}). Then, by Weyl's theorem \cite[Theorem XIII.14]{ReeSim78},  $\sigma_{\ess}(H_\alpha)=\sigma_{\ess}(H_\infty)=[0,\infty)$, and consequently $\sigma_{\ess}(H_\alpha+m^2-\omega^2)=[m^2-\omega^2,\infty)$. The operator of multiplication by $(\bs{\varphi}_{k,\omega}^\alpha)^{p-1}$ is relatively $(H_\alpha+m^2-\omega^2)$-compact (for the idea of the proof see, for instance, \cite[Proposition 8.20]{Sch12}). Therefore,  $\sigma_{\ess}(L^\alpha_{1,k})=\sigma_{\ess}(L^\alpha_{2,k})=[m^2-\omega^2,\infty)$ (see Corollary 2 of \cite[Theorem XIII.14]{ReeSim78}).

Finally, by $(v)$, $\sigma_{\ess}(L_{1,k}^\alpha)\cup \sigma_{\ess}(L_{2,k}^\alpha)$ contains a neighborhood of $+\infty$. Since  $\sigma_{\ess}(L^\alpha_k)$ is closed,  \eqref{inverse} yields  $(vi).$ 

\end{proof}
\begin{remark}
The equality $\sigma_{\ess}(H_\alpha)=[0,\infty)$ might be shown using classical Weyl's criterion (see \cite[Proposition 8.11]{Sch12}). 
Let  $H_0$ be defined by \eqref{Kirchhoff}. It is easily seen that $H_0\geq 0$, therefore, $\sigma_{\ess}(H_0)\subseteq\sigma(H_0)\subseteq [0,\infty)$. We can prove that $[0,\infty)\subseteq\sigma_{\ess}(H_0).$

Given $\lambda > 0$. Then, by  Weyl's criterion,  $\lambda \in \sigma_{\ess}(H_0)$ if, and  only if, there exists a sequence $\{\bb x_n\}_{n=1}^\infty\subset\dom(H_0)$ such that 

\begin{equation}\label{W-seq} \lim\limits_{n \to \infty}\frac{\|(H_0-\lambda)\bb x_n\|_{L^2(\Gamma)}}{\|\bb x_n\|_{L^2(\Gamma)}}=0,\end{equation}
 and $\bb x_n/\|\bb x_n\|_{L^2(\Gamma)}$  tends weakly to $\bb 0$ in $L^2(\Gamma)$.

We fix a function $\phi(x)\in C^\infty_0(\mathbb{R}_+)$ such that 
$$\phi(x)\geq 0,\quad  \phi(x)=1 \,\,\text{for}\,\, 1/4\leq x\leq 1/2,\quad\text{and}\quad\phi(x) = 0\,\, \text{for}\,\, x\geq 1.$$ 
We set $$\phi_n(x) = \phi(\frac 1{\sqrt{n}}(x-n^2)),\quad n\in \mathbb{N}.$$
Then 
\begin{equation*}\label{supp}
\supp \phi_n\subseteq (n^2,n^2+\sqrt{n}),\,\, \text{and}\,\,\supp \phi_k\cap\supp \phi_j=\emptyset,\,\, \text{for}\,\,\,k \neq j,\,\,\, k,j\in \mathbb{N}.\end{equation*} 
It is easily seen that 
$$\bb x_n=\left(e^{i\sqrt{\lambda}x}\phi_n(x),\underset{\bb 2}{0},\ldots,\underset{\bb N}{0}\right)$$ serves for \eqref{W-seq}.
Hence $\lambda\in \sigma_{\ess}(H_0)$ and  $\sigma_{\ess}(H_0)=[0,\infty)$ by closedness of the essential spectrum.  Then, by Weyl's theorem \cite[Theorem XIII.14]{ReeSim78},  $\sigma_{\ess}(H_\alpha)=\sigma_{\ess}(H_0)=[0,\infty)$.\hfill$\square $
\end{remark}
\subsection{Spectral properties of $L_{1,k}^\alpha$  and $L_{2,k}^\alpha$}
\begin{proposition}\label{grafoN2}
Let $\alpha\neq 0$,   $k\in\left\{0,\ldots,\left[\tfrac{N-1}{2}\right]\right\}$, and  $m^2-\omega^2>\tfrac{\alpha^2}{(N-2k)^2}$. Then
\begin{itemize}
\item[$(i)$]   $\ker(L^\alpha_{2,k})=\Span\{\bs{\varphi}_{k,\omega}^\alpha\}$ and $L^\alpha_{2,k}\geq 0$;
 \item[$(ii)$] $\ker(L^\alpha_{1,k})=\{\mathbf{0}\}$;
 \item[$(iii)$] $\ker(L^\alpha_k)=\{i\bb \Phi^\alpha_{k,\omega}\}.$
 
  \end{itemize}
\end{proposition}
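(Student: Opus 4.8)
The three items are tied together, so the plan is: establish $(i)$ and $(ii)$ by reducing the eigenvalue problems for $L_{1,k}^\alpha$ and $L_{2,k}^\alpha$ to scalar ODEs on the half-lines plus the two vertex conditions of $\dom(H_\alpha)$, and then deduce $(iii)$ from Remark \ref{ker_L}. Since $L_{1,k}^\alpha$, $L_{2,k}^\alpha$ and the profile $\bs\varphi_{k,\omega}^\alpha$ are precisely the objects occurring in the stability analysis of the NLS-$\delta$ equation in \cite{AngGol18, AngGol18a} after the substitution $\omega\mapsto m^2-\omega^2$, items $(i)$ and $(ii)$ can be proved by repeating the arguments there; I outline them below. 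For $(iii)$: by Remark \ref{ker_L}, a vector $(\bb u,\bb v)=(\bb u_1+i\bb u_2,\bb v_1+i\bb v_2)$ belongs to $\ker(L_k^\alpha)$ iff $\bb u_1\in\ker(L_{1,k}^\alpha)$, $\bb u_2\in\ker(L_{2,k}^\alpha)$, $\bb v_1=-\omega\bb u_2$ and $\bb v_2=\omega\bb u_1$; granting $(ii)$ forces $\bb u_1=\bb 0$ (hence $\bb v_2=\bb 0$), granting $(i)$ forces $\bb u_2=t\bs\varphi_{k,\omega}^\alpha$ for some $t\in\mathbb R$ (hence $\bb v_1=-\omega t\bs\varphi_{k,\omega}^\alpha$), and then $(\bb u,\bb v)=it(\bs\varphi_{k,\omega}^\alpha,i\omega\bs\varphi_{k,\omega}^\alpha)=it\,\bb\Phi_{k,\omega}^\alpha$, i.e. $\ker(L_k^\alpha)=\Span\{i\bb\Phi_{k,\omega}^\alpha\}$.

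For $(i)$ I would first note that, since the components $\tilde\varphi_{k,j}$ in \eqref{Phi_k} are strictly positive, the stationary equation \eqref{H_alpha} reads componentwise $-\tilde\varphi_{k,j}''+(m^2-\omega^2)\tilde\varphi_{k,j}-\tilde\varphi_{k,j}^{\,p}=0$, i.e. $L_{2,k}^\alpha\bs\varphi_{k,\omega}^\alpha=0$, so $\bs\varphi_{k,\omega}^\alpha\in\ker(L_{2,k}^\alpha)$. For the reverse inclusion, any $\bb z\in\ker(L_{2,k}^\alpha)\subset\dom(H_\alpha)$ solves, on the $j$-th half-line, a scalar equation that is an exponentially small perturbation of $-y''+(m^2-\omega^2)y=0$; hence its space of $L^2(\mathbb R_+)$-solutions is one-dimensional and is spanned by $\tilde\varphi_{k,j}$ (which decays like $e^{-\sqrt{m^2-\omega^2}\,x}$), so $z_j=b_j\tilde\varphi_{k,j}$. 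Continuity at the vertex together with $\tilde\varphi_{k,j}(0)=v_0:=\big[\tfrac{(p+1)(m^2-\omega^2)}{2}\sech^2 c_k\big]^{1/(p-1)}>0$ (the same value for every $j$, since the arguments of $\sech$ at $x=0$ are $\mp c_k$ and $\sech$ is even) forces $b_1=\dots=b_N$, giving $\ker(L_{2,k}^\alpha)=\Span\{\bs\varphi_{k,\omega}^\alpha\}$. To get $L_{2,k}^\alpha\ge 0$ I would argue by positivity: by Proposition \ref{spect_L_j}$(v)$ we have $0<m^2-\omega^2=\inf\sigma_{\ess}(L_{2,k}^\alpha)$, so $0$ is a discrete eigenvalue; the form associated with $L_{2,k}^\alpha$ satisfies the first Beurling--Deny criterion and $\Gamma$ is connected, hence the semigroup it generates is positivity improving, so (Perron--Frobenius) its lowest eigenvalue is simple with a strictly positive eigenfunction; since $\bs\varphi_{k,\omega}^\alpha>0$ lies in the kernel, that lowest eigenvalue must be $0$, and therefore $L_{2,k}^\alpha\ge 0$.

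Item $(ii)$ is the heart of the matter. The same half-line analysis applies with the potential $-p\tilde\varphi_{k,j}^{\,p-1}$: differentiating the profile equation shows that $\tilde\varphi_{k,j}'$ solves the $j$-th scalar equation for $L_{1,k}^\alpha$ and decays, so any $\bb y\in\ker(L_{1,k}^\alpha)\subset\dom(H_\alpha)$ has $y_j=a_j\tilde\varphi_{k,j}'$. Writing $\epsilon_j=1$ for $1\le j\le k$ and $\epsilon_j=-1$ for $k<j\le N$, a direct differentiation of \eqref{Phi_k} gives $\tilde\varphi_{k,j}'(0)=\epsilon_j T$ with $T=\dfrac{\alpha\,v_0}{2k-N}$, and $\tilde\varphi_{k,j}''(0)=(m^2-\omega^2)v_0-v_0^{p}=:D$ (independent of $j$). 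Since $\alpha\neq 0$ and $2k-N\neq 0$ we have $T\ne 0$; continuity at the vertex then forces $a_j\epsilon_j$ to be a common constant $A\in\mathbb R$, and (using $\epsilon_1^2=1$ and $\sum_{j=1}^N\epsilon_j=2k-N$) the $\delta$-condition $\sum_{j=1}^N y_j'(0)=\alpha y_1(0)$ becomes $A\big(D(2k-N)-\alpha T\big)=0$. The crucial point is that the bracket does not vanish: substituting $T=\tfrac{\alpha v_0}{2k-N}$ and $v_0^{p-1}=\tfrac{(p+1)(m^2-\omega^2)}{2}\sech^2 c_k=\tfrac{(p+1)(m^2-\omega^2)}{2}\big(1-\tfrac{\alpha^2}{(N-2k)^2(m^2-\omega^2)}\big)$, a short computation shows that $D(2k-N)=\alpha T$ holds if and only if $m^2-\omega^2=\tfrac{\alpha^2}{(N-2k)^2}$, which is excluded by hypothesis. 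Hence $A=0$, so all $a_j=0$ and $\ker(L_{1,k}^\alpha)=\{\bb 0\}$.

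I expect the main obstacle to be exactly that last computation in $(ii)$: one must carry the vertex values $\sech c_k$, $\tanh c_k$ and $v_0=\tilde\varphi_{k,1}(0)$ through the matching condition and check that it degenerates precisely at the threshold $m^2-\omega^2=\alpha^2/(N-2k)^2$, so that the strictness of the standing hypothesis (together with $\alpha\ne 0$, which is what makes $\tilde\varphi_{k,j}'(0)\ne 0$) is exactly what rules out a nontrivial kernel. The remaining ingredients — one-dimensionality of the $L^2(\mathbb R_+)$-solution spaces on the half-lines, and the Beurling--Deny/Perron--Frobenius argument behind $L_{2,k}^\alpha\ge 0$ — are routine, and $(iii)$ is then immediate from Remark \ref{ker_L}.
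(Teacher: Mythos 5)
Your proposal is correct, and item $(iii)$ is obtained exactly as in the paper (via Remark \ref{ker_L}). Where you differ is in $(i)$--$(ii)$: the paper simply cites \cite[Proposition 1]{AngGol18a} with $\omega$ replaced by $m^2-\omega^2$, whereas you supply a self-contained argument. Your route — on each edge the $L^2(\mathbb{R}_+)$-solution space of the scalar equation is one-dimensional, spanned by $\tilde\varphi_{k,j}$ for $L^\alpha_{2,k}$ and by $\tilde\varphi'_{k,j}$ for $L^\alpha_{1,k}$, and the vertex conditions then pin down the kernel — is the standard one and your key computation checks out: with $T=\tfrac{\alpha v_0}{2k-N}$ and $D=(m^2-\omega^2)v_0-v_0^p$, the condition $D(2k-N)=\alpha T$ reduces, after inserting $v_0^{p-1}=\tfrac{p+1}{2}\bigl(m^2-\omega^2-\tfrac{\alpha^2}{(N-2k)^2}\bigr)$, to $\tfrac{1-p}{2}\bigl(m^2-\omega^2-\tfrac{\alpha^2}{(N-2k)^2}\bigr)=0$, which indeed fails precisely under the standing strict inequality, so $\ker(L^\alpha_{1,k})=\{\bb 0\}$. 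The only soft spot is the nonnegativity of $L^\alpha_{2,k}$: you assert that the associated semigroup is positivity improving, which on a star graph with a $\delta$-coupling is true but not free. You can avoid it: positivity preservation (the first Beurling--Deny criterion, which your form does satisfy) already implies that if $\inf\sigma(L^\alpha_{2,k})<0$ it would be a discrete eigenvalue (it lies below $\sigma_{\ess}=[m^2-\omega^2,\infty)$) admitting a nonnegative eigenfunction, which cannot be $L^2$-orthogonal to the strictly positive kernel element $\bs{\varphi}_{k,\omega}^\alpha$; the cited reference instead gets $L^\alpha_{2,k}\geq 0$ by extension-theoretic bounds of the type used later in Proposition \ref{n(L_k)}. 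With that small repair your argument is complete and, arguably, more transparent than a bare citation, since it exhibits why the threshold $m^2-\omega^2=\alpha^2/(N-2k)^2$ and the hypothesis $\alpha\neq 0$ are exactly what exclude a nontrivial kernel of $L^\alpha_{1,k}$.
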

\begin{proof}
For the proof of $(i), (ii)$ see \cite[Proposition 1]{AngGol18a} (with $\omega$ substituted by $m^2-\omega^2$). The proof of
 $(iii)$ follows from $(i),(ii)$, and Remark \ref{ker_L}.      
         
\end{proof}

The description of the negative spectrum of $L^\alpha_{1,k}$  might be obtained as in \cite[Theorem 3.4]{AngGol18a}  and \cite[Proposition 3.17]{AngGol18}. For the  reader's convenience  we provide the principal steps of the proofs.
 
 Consider  the following  self-adjoint  Schr\"odinger operator on $L^2(\Gamma)$ with  the Kirchhoff condition at $\nu=0$,
 \begin{equation}\label{L^0_1}
 \begin{split}
 &L^0_1\bb v=-\bb v''+(m^2-\omega^2)\bb v-p\varphi_0^{p-1}\bb v, \\
 &\dom(L^0_1)=\Big\{\mathbf{v}\in H^2(\Gamma): v_1(0)=\ldots=v_N(0),\,\,\sum\limits_{j=1}^N  v_j'(0)=0\Big\},
 \end{split}
 \end{equation}
 where $\varphi_0$ is the half-soliton solution for the classical NLS model,
 \begin{equation*}\label{var_0}
 \varphi_0(x)=\left[\frac{(p+1)(m^2-\omega^2)}{2} \sech^2\left(\frac{(p-1)\sqrt{m^2-\omega^2}}{2}x \right)\right]^{\frac{1}{p-1}}.
  \end{equation*}
From the definition of the profiles $\bs{\varphi}_{k,\omega}^\alpha$ in \eqref{Phi_k} one gets
 \begin{equation*}\label{converg}
 \bs{\varphi}_{k,\omega}^\alpha\underset{\alpha\to 0}{\longrightarrow} \bs{\varphi}_0\quad \text{in}\;\; H^1(\Gamma),
 \end{equation*}
 where $\bs{\varphi}_0=(\varphi_0)_{j=1}^N$.
To study negative spectrum of ${L}^\alpha_{1,k}$, we apply the analytic perturbation theory. Hence first we need to describe spectral properties of  $L^0_1$ (which is the limiting value of ${L}^\alpha_{1,k}$ as $\alpha\to 0$).

  \begin{theorem}\label{spect_L^0_1}
 Let $L^0_1$ be defined by \eqref{L^0_1} and $k\in\left\{1,\ldots,\left[\tfrac{N-1}{2}\right]\right\}$. Then 
\begin{itemize}
  \item[$(i)$] $\ker(L^0_1)=\Span\{\hat{\bs{\varphi}}_{0,1},\ldots,\hat{\bs{\varphi}}_{0,N-1}\}$, where
   \begin{equation*}\label{Psi_j}
\hat{\bs{\varphi}}_{0,j}=(0,\ldots,0,\underset{\bf j}{\varphi'_{0}},\underset{\bf j+1}{-\varphi'_{0}},0,\ldots,0);
 \end{equation*}
  \item[$(ii)$] in the space  $L^2_k(\Gamma)$ we have $\ker(L^0_1)=\Span\{\bs{\varphi}_{0,k}\}$, i.e. $\ker(L^0_1|_{L^2_k(\Gamma)})=\Span\{\bs{\varphi}_{0,k}\}$, where
  \begin{equation}\label{Psi_0}
\bs{\varphi}_{0,k}=\left(\underset{\bf 1}{\tfrac{N-k}{k}\varphi'_{0}},\ldots, \underset{\bf k}{\tfrac{N-k}{k}\varphi'_{0}},\underset{\bf k+1}{-\varphi'_{0}},\ldots,\underset{\bf N}{-\varphi'_{0}}\right);
 \end{equation}
  \item[$(iii)$] $n(L^0_1)=n(L^0_1|_{L^2_k(\Gamma)})=1$;
   \item[$(iv)$] the rest of the spectrum of $L^0_1$  is positive and bounded away from zero.
  \end{itemize}
  \end{theorem}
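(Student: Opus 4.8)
The plan is to exploit the direct-sum structure of $L^0_1$ after decoupling the vertex condition, reducing everything to the well-understood one-dimensional operator $\ell_0 = -\frac{d^2}{dx^2}+(m^2-\omega^2)-p\varphi_0^{p-1}$ on the half-line $\mathbb{R}_+$. First I would recall the classical scalar picture on the whole line: the operator $-\frac{d^2}{dx^2}+(m^2-\omega^2)-p\varphi_0^{p-1}$ on $L^2(\mathbb{R})$ has $\varphi_0'(x)$ in its kernel (differentiate the stationary equation $-\varphi_0''+(m^2-\omega^2)\varphi_0-\varphi_0^p=0$), and since $\varphi_0'$ has exactly one zero, Sturm oscillation theory gives a simple negative eigenvalue below it with a positive, nodeless ground state $\chi>0$; the rest of the spectrum is $[m^2-\omega^2,\infty)$. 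Transplanting to $\mathbb{R}_+$, the half-line operator with Neumann condition $v'(0)=0$ has eigenfunctions that are exactly the restrictions of the even whole-line eigenfunctions, while the Dirichlet condition $v(0)=0$ picks out the odd ones; in particular $\varphi_0'(0)=0$ fails (it does not vanish at $0$), so $\varphi_0'$ is \emph{not} a Neumann eigenfunction on $\mathbb{R}_+$, but it \emph{is} a legitimate building block for kernel elements of the graph operator once the coupling is taken into account.

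Next I would analyze $\ker(L^0_1)$ directly. A vector $\mathbf{v}=(v_j)_{j=1}^N$ in the kernel satisfies $\ell_0 v_j = 0$ on each edge, so each $v_j$ lies in the two-dimensional solution space of $\ell_0 v=0$; the $L^2(\mathbb{R}_+)$ constraint forces $v_j$ to be a multiple of the unique (up to scalar) decaying solution, which is precisely $\varphi_0'$ (one checks $\varphi_0'\in L^2(\mathbb{R}_+)$ and that the second solution grows). Hence $v_j = a_j\varphi_0'$ for scalars $a_j$. The Kirchhoff conditions in \eqref{L^0_1} then read $a_1\varphi_0'(0)=\cdots=a_N\varphi_0'(0)$ and $\big(\sum_j a_j\big)\varphi_0''(0)=0$. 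Since $\varphi_0'(0)\neq 0$ (as $\varphi_0$ attains its maximum at a strictly positive point, because of the shift; actually for $\varphi_0$ centered at the origin $\varphi_0'(0)=0$ — here I must be careful). Let me re-examine: $\varphi_0$ as written in \eqref{var_0} is the half-soliton \emph{centered at the origin}, so $\varphi_0'(0)=0$ and $\varphi_0''(0)\neq 0$. Then the continuity condition $a_j\varphi_0'(0)=a_i\varphi_0'(0)$ is automatically satisfied, while $\sum_j a_j\,\varphi_0''(0)=0$ forces $\sum_j a_j=0$. Thus $\ker(L^0_1)=\{(a_j\varphi_0')_{j=1}^N:\sum a_j=0\}$, which is $(N-1)$-dimensional with the stated basis $\hat{\bs\varphi}_{0,j}$; restricting to $L^2_k(\Gamma)$ collapses the constraint to $k a_1 = -(N-k)a_{k+1}$ in the obvious coordinates, giving the one-dimensional kernel spanned by $\bs\varphi_{0,k}$ in \eqref{Psi_0}. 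This proves $(i)$ and $(ii)$.

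For $(iii)$ and $(iv)$ I would pass through the orthogonal decomposition $L^2(\Gamma)=L^2_{\mathrm{sym}}\oplus L^2_{\mathrm{anti}}$, where the symmetric part consists of vectors with $v_1=\cdots=v_N$ (isomorphic to $L^2(\mathbb{R}_+)$) and its complement is spanned by differences; both subspaces reduce $L^0_1$ because the potential $p\varphi_0^{p-1}$ is the same on every edge. On the symmetric part the reduced operator is unitarily equivalent to the half-line Neumann operator $\ell_0$ with $v'(0)=0$, whose lowest eigenvalue is the negative one with nodeless eigenfunction $\chi>0$ (restriction of the even whole-line ground state), contributing exactly one negative eigenvalue; its next eigenvalue is $0$ with eigenfunction the even part of $\varphi_0'$—but $\varphi_0'$ is odd about the origin, so in fact on the symmetric part the kernel is trivial and the spectrum above the negative eigenvalue is strictly positive up to $[m^2-\omega^2,\infty)$. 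On the antisymmetric part the reduced operator is the half-line \emph{Dirichlet} operator $\ell_0$ with $v(0)=0$, which is nonnegative (its ground state is $\varphi_0'$, an odd-about-origin nodeless-on-$\mathbb{R}_+$ function at eigenvalue $0$) and has the rest of its spectrum positive and bounded away from $0$. Summing Morse indices gives $n(L^0_1)=1$, realized entirely within $L^2_{\mathrm{sym}}\subset L^2_k(\Gamma)$, so $n(L^0_1|_{L^2_k(\Gamma)})=1$ as well; the essential spectrum is $[m^2-\omega^2,\infty)$ by the same Weyl/relative-compactness argument used in Proposition~\ref{spect_L_j}, and the remaining discrete spectrum is strictly positive, giving $(iv)$.

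The main obstacle I anticipate is the careful bookkeeping of boundary values of $\varphi_0$ and $\varphi_0'$ at the vertex (whether $\varphi_0'(0)=0$ or not, depending on the centering convention in \eqref{var_0}) and the corresponding correct reduction to Neumann versus Dirichlet half-line operators on the symmetric/antisymmetric subspaces; getting the parity right is what makes the Morse-index count come out to exactly $1$ rather than $0$ or $2$. Everything else—decoupling via the symmetric/antisymmetric splitting, Sturm oscillation on the half-line, and the relative-compactness computation of the essential spectrum—is standard once that point is pinned down, and matches the treatment in \cite[Theorem 3.4]{AngGol18a} and \cite[Proposition 3.17]{AngGol18}.
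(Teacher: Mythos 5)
Your proposal is correct, but it takes a genuinely different (and more self-contained) route than the paper: the paper proves $(i)$--$(iii)$ simply by citing \cite[Theorem 3.5]{AngGol18a} (the same operator with $m^2-\omega^2$ in place of $\omega$) and gets $(iv)$ from the Weyl/relative-compactness computation of $\sigma_{\ess}(L^0_1)=[m^2-\omega^2,\infty)$ as in Proposition \ref{spect_L_j}-$(v)$, whereas you reconstruct the whole spectral picture directly. Your two key steps are sound: the kernel computation (on each edge the only $L^2$ solution of $\ell_0 v=0$ is a multiple of $\varphi_0'$; continuity at the vertex is automatic since $\varphi_0'(0)=0$, and Kirchhoff forces $\sum_j a_j=0$, giving the $(N-1)$-dimensional kernel and its one-dimensional trace on $L^2_k(\Gamma)$), and the reduction of $L^0_1$ by the symmetric/antisymmetric decomposition to a scalar Neumann operator (Morse index $1$, trivial kernel, by the even-sector argument) plus a nonnegative Dirichlet block (ground state $\varphi_0'$ at eigenvalue $0$), so $n(L^0_1)=1+0=1$ and the negative direction lies in the symmetric sector contained in $L^2_k(\Gamma)$. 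What your approach buys is a proof readable without the companion paper, and it exposes why the Morse index is exactly one (the zero mode is odd, so it never interferes with the Neumann sector); what the citation buys is brevity and consistency with the extension-theory/perturbation machinery used elsewhere in the paper. Three small points to polish: $(1)$ the parenthetical claim in your first paragraph that $\varphi_0'(0)\neq 0$ is wrong and contradicts your own (correct) later statement $\varphi_0'(0)=0$, $\varphi_0''(0)\neq 0$ --- only the corrected version should survive; $(2)$ on the antisymmetric subspace you should note that the Kirchhoff condition $\sum_j v_j'(0)=0$ is automatic (differentiate $\sum_j v_j\equiv 0$), so the reduced operator is exactly the Dirichlet one, or alternatively argue $L^0_1\geq 0$ there via the quadratic form, which only needs $v_j(0)=0$; $(3)$ ``the rest of the spectrum is $[m^2-\omega^2,\infty)$'' for the whole-line operator should be weakened to ``positive, with essential spectrum $[m^2-\omega^2,\infty)$,'' since additional positive discrete eigenvalues in $(0,m^2-\omega^2)$ are not excluded and are harmless for $(iv)$.
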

  \begin{proof}
  For the proof of $(i)-(iii)$ see \cite[Theorem 3.5]{AngGol18a}. As in the proof of Proposition \ref{spect_L_j}-$(v)$, one might show that $\sigma_{\ess}(L^0_1)=[m^2-\omega^2,\infty)$ and therefore $(iv)$ holds.
\end{proof}
One of the principal facts for  the investigation of the negative spectrum of the operator ${L}^\alpha_{1,k}$ is the following lemma.
 \begin{lemma}\label{analici}Let $k\in\left\{1,\ldots,\left[\frac{N-1}{2}\right]\right\}.$  As a function of $\alpha$, $({L}^\alpha_{1,k})$ is a real-analytic family of self-adjoint operators of type (B) in the sense of Kato.
\end{lemma}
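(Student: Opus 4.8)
The plan is to verify the two defining features of an analytic family of type (B) in Kato's sense: first, that each operator $L^\alpha_{1,k}$ arises from a closed, densely defined, symmetric, bounded-below sesquilinear form on a fixed form domain independent of $\alpha$; and second, that the form depends on $\alpha$ real-analytically in the sense of bounded-below forms (i.e. the associated bounded-below family of forms is analytic of type (a) after a trivial shift). The natural candidate form is
\begin{equation*}
\mathfrak{l}^\alpha_{1,k}(\bb u,\bb v)=\la\bb u',\bb v'\ra_{L^2(\Gamma)}+\alpha\,\re\big(u_1(0)\overline{v_1(0)}\big)+(m^2-\omega^2)\la\bb u,\bb v\ra_{L^2(\Gamma)}-p\int_\Gamma(\bs{\varphi}_{k,\omega}^\alpha)^{p-1}\bb u\overline{\bb v}\,dx,
\end{equation*}
with form domain $\mc E(\Gamma)$, which is independent of $\alpha$. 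One checks, exactly as in Step 2 of the proof of Proposition \ref{C_0}, that the quadratic form $\bb u\mapsto\|\bb u'\|^2_{L^2(\Gamma)}+\alpha|u_1(0)|^2$ is bounded below by $\inf\sigma(H_\alpha)\|\bb u\|^2_{L^2(\Gamma)}\geq -\tfrac{\alpha^2}{N^2}\|\bb u\|^2_{L^2(\Gamma)}$ via the Sobolev trace inequality, so $\mathfrak{l}^\alpha_{1,k}$ is bounded below and closed on $\mc E(\Gamma)$; the potential term is a bounded (indeed form-compact) perturbation. By the Representation Theorem \cite[Chapter VI, Theorem 2.1]{Kat66}, the self-adjoint operator associated with $\mathfrak{l}^\alpha_{1,k}$ is precisely $L^\alpha_{1,k}$ with $\dom(L^\alpha_{1,k})=\dom(H_\alpha)$.

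Next I would split the form into the $\alpha$-dependent pieces. The boundary term $\alpha\,\re(u_1(0)\overline{v_1(0)})$ is manifestly linear — hence entire — in $\alpha$, and it is bounded relative to the form $\|\cdot'\|^2_{L^2(\Gamma)}$ with form-bound zero (again by the trace inequality $|u_1(0)|^2\leq\varepsilon\|u_1'\|^2_{L^2(\Gamma)}+C_\varepsilon\|u_1\|^2_{L^2(\Gamma)}$). The remaining $\alpha$-dependence sits in the zeroth-order coefficient $(\bs{\varphi}_{k,\omega}^\alpha)^{p-1}$. Here I would use the explicit formula \eqref{Phi_k}: the components of $\bs{\varphi}_{k,\omega}^\alpha$ are $\sech^{2/(p-1)}$ profiles whose only $\alpha$-dependence enters through the translation parameter $c_k=c_k(\alpha)=\tanh^{-1}\!\big(\tfrac{\alpha}{(2k-N)\sqrt{m^2-\omega^2}}\big)$, which is real-analytic in $\alpha$ on the interval $|\alpha|<(N-2k)\sqrt{m^2-\omega^2}$ where the standing wave exists. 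Consequently $x\mapsto(\tilde\varphi_{k,j}(x))^{p-1}$ is a composition of real-analytic functions of $(\alpha,x)$ and is bounded in $L^\infty(\Gamma)$ locally uniformly in $\alpha$; therefore $\alpha\mapsto(\bs{\varphi}_{k,\omega}^\alpha)^{p-1}$ is a real-analytic map from the admissible $\alpha$-interval into $L^\infty(\Gamma)$, and the multiplication form $(\bb u,\bb v)\mapsto\int_\Gamma(\bs{\varphi}_{k,\omega}^\alpha)^{p-1}\bb u\overline{\bb v}\,dx$ is real-analytic into the bounded forms on $L^2(\Gamma)$, a fortiori relatively form-bounded with respect to $\mathfrak{l}^0_{1,k}$ with arbitrarily small bound. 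Since a finite sum of analytic type-(a) perturbations with relative bound $<1$ of a fixed closed bounded-below form is again analytic of type (a), Kato's criterion \cite[Chapter VII, \S4]{Kat66} yields that $(L^\alpha_{1,k})$ is a real-analytic family of type (B).

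The step I expect to require the most care is confirming the real-analyticity (not just smoothness) of $\alpha\mapsto(\bs{\varphi}_{k,\omega}^\alpha)^{p-1}$ as an $L^\infty(\Gamma)$-valued map, uniformly enough to control the perturbation on the whole half-line: one must check that the $\sech$-profiles, after raising to the power $p-1$, stay in a bounded subset of $L^\infty(\Gamma)$ as $\alpha$ ranges over a compact subinterval of the admissibility set, and that the power map $t\mapsto t^{p-1}$ (which for non-integer $p$ is only real-analytic away from $t=0$) composed with the strictly positive profile is genuinely real-analytic — this is fine because $\tilde\varphi_{k,j}(x)>0$ for every $x$, so we never hit the branch point. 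Once this is in hand, the rest is the bookkeeping of Kato's type-(B)/type-(a) machinery, which is routine. I would also note, for completeness, that the admissibility condition $m^2-\omega^2>\tfrac{\alpha^2}{(N-2k)^2}$ in the hypothesis guarantees $|\alpha|$ stays strictly below $(N-2k)\sqrt{m^2-\omega^2}$, so $\alpha=0$ is an interior point of the domain of analyticity, which is exactly what is needed to later run the analytic perturbation argument starting from the Kirchhoff operator $L^0_1$ of Theorem \ref{spect_L^0_1}.
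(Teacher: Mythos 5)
Your proposal is correct and follows the standard route: realize $L^\alpha_{1,k}$ via the closed, bounded-below form with $\alpha$-independent domain $\mc E(\Gamma)$ and invoke Kato's type (B) criterion. This is essentially the argument the paper relies on (it gives no proof of Lemma \ref{analici}, deferring to the analogous statements in \cite{AngGol18a, AngGol18}, which are proved by exactly this form-based scheme), so there is no divergence of method. One point should be tightened, namely the step you yourself flagged as delicate: your justification that the composition with $t\mapsto t^{p-1}$ is harmless ``because $\tilde\varphi_{k,j}(x)>0$ for every $x$'' is not by itself a uniform argument, since $\tilde\varphi_{k,j}(x)\to 0$ as $x\to\infty$ and for non-integer $p$ the values therefore approach the branch point; likewise, pointwise joint analyticity in $(\alpha,x)$ plus boundedness does not automatically give real-analyticity as an $L^\infty(\Gamma)$-valued map without uniform-in-$x$ control of the radius of convergence. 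Both issues dissolve at once if you use \eqref{Phi_k} directly: the exponent $\tfrac1{p-1}$ cancels against the power $p-1$, so
\begin{equation*}
\left(\tilde\varphi_{k,j}(x)\right)^{p-1}=\frac{(p+1)(m^2-\omega^2)}{2}\,\sech^2\!\Big(\tfrac{(p-1)\sqrt{m^2-\omega^2}}{2}x\mp c_k(\alpha)\Big),
\end{equation*}
and $\sech^2$ extends to a bounded holomorphic function on a horizontal strip, which gives Cauchy estimates uniform in $x$ and hence real-analyticity of $\alpha\mapsto(\bs\varphi_{k,\omega}^\alpha)^{p-1}$ into $L^\infty(\Gamma)$ through the analytic shift $c_k(\alpha)$. (Alternatively, for type (B) it suffices to check scalar real-analyticity of $\alpha\mapsto\mathfrak{l}^\alpha_{1,k}[\bb u]$ for each fixed $\bb u\in\mc E(\Gamma)$, which follows from the same formula by differentiation under the integral sign.) With this substitution your argument is complete.
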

The above lemma and Theorem \ref{spect_L^0_1} lead to the next  result.
\begin{proposition}\label{perteigen} Let  $k\in\left\{1,\ldots,\left[\tfrac{N-1}{2}\right]\right\}$. Then there  exist $\alpha_0>0$ and two analytic functions $\lambda_k : (-\alpha_0,\alpha_0)\to \mathbb R$ and $\bb f_k: (-\alpha_0,\alpha_0)\to L^2_k(\Gamma)$ such that
\begin{enumerate}
\item[$(i)$] $\lambda_k(0)=0$ and $\bb f_k(0)=\bs{\varphi}_{0,k}$, where $\bs{\varphi}_{0,k}$ is defined by \eqref{Psi_0};

\item[$(ii)$] for all $\alpha\in (-\alpha_0,\alpha_0)$, $\lambda_k(\alpha)$ is the  simple isolated second eigenvalue of $L^\alpha_{1,k}$ in $L^2_k(\Gamma)$, and $\bb f_k(\alpha)$ is the associated eigenvector for $\lambda_k(\alpha)$;

\item[$(iii)$] $\alpha_0$ can be chosen small enough to ensure that for  $\alpha\in (-\alpha_0,\alpha_0)$  the spectrum of $L_{1,k}^\alpha$ in $L^2_k(\Gamma)$ is positive, except at most the  first two eigenvalues.
\end{enumerate}
\end{proposition}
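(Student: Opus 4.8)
The plan is to derive everything from the analyticity statement in Lemma~\ref{analici} together with the spectral description of the limiting operator $L^0_1$ from Theorem~\ref{spect_L^0_1}, applied inside the invariant subspace $L^2_k(\Gamma)$. First I would record that, by Lemma~\ref{well_X_k} and the reduction behind it (or directly by the symmetry argument: $L^\alpha_{1,k}$ commutes with permutations of the first $k$ and of the last $N-k$ edges), the space $L^2_k(\Gamma)$ reduces $L^\alpha_{1,k}$, so it makes sense to speak of the restricted operator $L^\alpha_{1,k}|_{L^2_k(\Gamma)}$ and its eigenvalues. Next I would invoke Lemma~\ref{analici}: restricting a real-analytic family of type~(B) to a fixed invariant subspace again yields a real-analytic family of type~(B) on that subspace, so $\bigl(L^\alpha_{1,k}|_{L^2_k(\Gamma)}\bigr)$ is real-analytic in $\alpha$ near $0$.

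Now apply Theorem~\ref{spect_L^0_1}: at $\alpha=0$ the restricted operator $L^0_1|_{L^2_k(\Gamma)}$ has $n\bigl(L^0_1|_{L^2_k(\Gamma)}\bigr)=1$ by item~$(iii)$, a simple kernel $\ker\bigl(L^0_1|_{L^2_k(\Gamma)}\bigr)=\Span\{\bs\varphi_{0,k}\}$ by item~$(ii)$, and the rest of its spectrum positive and bounded away from zero by item~$(iv)$. Thus $0$ is a \emph{simple isolated} eigenvalue of $L^0_1|_{L^2_k(\Gamma)}$, and it is the second one (the first being the negative simple eigenvalue). By the Kato--Rellich theory of analytic perturbation for families of type~(B) (see \cite{Kat66}, Chapter~VII), a simple isolated eigenvalue and a choice of associated normalized eigenvector persist and depend real-analytically on the parameter in a neighbourhood of $\alpha=0$. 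This yields $\alpha_0>0$ and analytic maps $\lambda_k:(-\alpha_0,\alpha_0)\to\mathbb R$, $\bb f_k:(-\alpha_0,\alpha_0)\to L^2_k(\Gamma)$ with $\lambda_k(0)=0$, $\bb f_k(0)=\bs\varphi_{0,k}$, and $L^\alpha_{1,k}\bb f_k(\alpha)=\lambda_k(\alpha)\bb f_k(\alpha)$, $\lambda_k(\alpha)$ simple and isolated; this is items~$(i)$ and~$(ii)$.

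For item~$(iii)$ I would argue by upper semicontinuity of the number of eigenvalues below a fixed level. Since $\sigma_{\ess}(L^\alpha_{1,k})=[m^2-\omega^2,\infty)$ is stable in $\alpha$ (by the relative compactness argument used in Proposition~\ref{spect_L_j}-$(v)$), and since at $\alpha=0$ the operator $L^0_1|_{L^2_k(\Gamma)}$ has exactly two eigenvalues (one negative, one at $0$) below some level $\delta>0$ with the remainder of the spectrum above $\delta$, the analytic/continuous dependence of the resolvent forces, for $\alpha_0$ small, $L^\alpha_{1,k}|_{L^2_k(\Gamma)}$ to have at most two eigenvalues below $\delta/2$ and the rest of its spectrum $\ge\delta/2>0$; shrinking $\alpha_0$ further if necessary, the remaining eigenvalue(s) beyond the first two are positive. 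Combined with the persistence of the (possibly now nonzero) second eigenvalue $\lambda_k(\alpha)$ from item~$(ii)$, this gives exactly the claim: apart from at most the first two eigenvalues, the spectrum of $L^\alpha_{1,k}$ in $L^2_k(\Gamma)$ is positive.

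The main obstacle is purely a matter of carefully importing the analytic perturbation machinery: one must check that restriction to $L^2_k(\Gamma)$ preserves the type~(B) structure (equivalently, that the associated form domain $\mc E_k(\Gamma)$ is compatible and the form is still closed and sectorial there), and that the eigenvalue $0$ of the limit operator is genuinely isolated \emph{within} the reduced space — both of which follow from Theorem~\ref{spect_L^0_1}-$(ii)$,$(iv)$ — so that the Kato--Rellich selection of an analytic eigenpair applies. Once that is in place, items~$(i)$--$(iii)$ are immediate.
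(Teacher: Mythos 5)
Your proposal is correct and follows essentially the same route as the paper, which simply invokes the Kato--Rellich theorem for the real-analytic family of type (B) from Lemma~\ref{analici}, restricted to $L^2_k(\Gamma)$, together with the spectral data of $L^0_1$ from Theorem~\ref{spect_L^0_1}, referring to the proof of Proposition~2 in \cite{AngGol18a} for the details. You merely spell out those details (reduction to the invariant subspace, simplicity and isolation of the zero eigenvalue, and stability of the spectral gap and essential spectrum), which is exactly what the cited argument does.
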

\begin{proof}
It is implied  by the Kato-Rellich theorem (see \cite[Theorem XII.8]{ReeSim78}). For the details see the proof of \cite[Proposition 2]{AngGol18a}.
\end{proof}
The next proposition provides characterization of $n(L^\alpha_{1, k}|_{L^2_k(\Gamma)})$ for small $\alpha.$
\begin{proposition}\label{signeigen} 
 Let  $k\in\left\{1,\ldots,\left[\tfrac{N-1}{2}\right]\right\}$.
There exists $0<\alpha_1<\alpha_0$ such that $\lambda_k(\alpha)<0$ for any $\alpha\in (-\alpha_1,0)$, and $\lambda_k(\alpha)>0$ for  any $\alpha\in (0, \alpha_1)$. Therefore, $n(L^\alpha_{1, k}|_{L^2_k(\Gamma)})=2$  for $\alpha<0$, and $n(L^\alpha_{1, k}|_{L^2_k(\Gamma)})=1$  for $\alpha>0$ if $\alpha$ is small enough.
\end{proposition}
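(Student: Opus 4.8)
The plan is to track the sign of the perturbed eigenvalue $\lambda_k(\alpha)$ near $\alpha=0$ by computing the first non-vanishing term of its Taylor expansion, exactly as in \cite[Proposition 3]{AngGol18a}. Since $\lambda_k(0)=0$ and $\lambda_k$ is analytic by Proposition \ref{perteigen}, it suffices to evaluate $\lambda_k'(0)$; if this is nonzero, its sign determines the sign of $\lambda_k(\alpha)$ on each side of $0$. To compute $\lambda_k'(0)$ I would use the standard first-order perturbation formula for an analytic family of type (B): differentiating the eigenvalue equation $L^\alpha_{1,k}\bb f_k(\alpha)=\lambda_k(\alpha)\bb f_k(\alpha)$ (interpreted via the associated quadratic forms) at $\alpha=0$ and pairing with the unperturbed eigenvector $\bs\varphi_{0,k}$ yields
\[
\lambda_k'(0)=\frac{\dfrac{d}{d\alpha}\Big|_{\alpha=0} t^{(1)}_{k,\alpha}(\bb f_k(\alpha),\bb f_k(\alpha))}{\|\bs\varphi_{0,k}\|^2_{L^2(\Gamma)}},
\]
where $t^{(1)}_{k,\alpha}$ is the sesquilinear form associated with $L^\alpha_{1,k}$. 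The only $\alpha$-dependence sitting in the form is the boundary term $\alpha|v_1(0)|^2$ coming from $t_\alpha$, together with the (smooth in $\alpha$) dependence of the potential $p(\bs\varphi_{k,\omega}^\alpha)^{p-1}$. A short computation — using that the potential's $\alpha$-derivative contributes a term proportional to $\langle \partial_\alpha[(\bs\varphi^\alpha_{k,\omega})^{p-1}]|_{\alpha=0}\,\bs\varphi_{0,k},\bs\varphi_{0,k}\rangle$ which, by the structure of the kernel element $\bs\varphi_{0,k}$ (built from $\varphi_0'$) and the Feynman–Hellmann-type identities for the soliton, combines with the boundary term — reduces $\lambda_k'(0)$ to an explicit expression whose sign is determined by the factor $(N-2k)$ times a positive quantity. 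Concretely I expect
\[
\lambda_k'(0)=\frac{C_{k,N}}{\|\bs\varphi_{0,k}\|^2_{L^2(\Gamma)}}\,\big(\varphi_0'(0)\big)^2\,,\qquad C_{k,N}>0,
\]
so that $\lambda_k'(0)>0$, giving $\lambda_k(\alpha)<0$ for $\alpha\in(-\alpha_1,0)$ and $\lambda_k(\alpha)>0$ for $\alpha\in(0,\alpha_1)$ with $\alpha_1\le\alpha_0$ chosen so that $\lambda_k$ does not change sign again on $(-\alpha_1,\alpha_1)\setminus\{0\}$.

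Once the sign of $\lambda_k(\alpha)$ is pinned down, the Morse index count is immediate from Proposition \ref{perteigen}(iii): for $\alpha\in(-\alpha_1,\alpha_1)$ the spectrum of $L^\alpha_{1,k}$ in $L^2_k(\Gamma)$ is positive except possibly for the first two eigenvalues; the first eigenvalue is negative for all small $\alpha$ by continuity from the negative simple eigenvalue of $L^0_1$ guaranteed by Theorem \ref{spect_L^0_1}(iii), while the second eigenvalue is precisely $\lambda_k(\alpha)$, which is negative for $\alpha<0$ and positive for $\alpha>0$. Hence $n(L^\alpha_{1,k}|_{L^2_k(\Gamma)})=2$ for $\alpha<0$ small and $=1$ for $\alpha>0$ small, which is the assertion.

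The main obstacle is the explicit evaluation of $\lambda_k'(0)$ — in particular verifying that it is genuinely nonzero and extracting its sign. This requires carefully assembling the contribution of the boundary term $\alpha|v_1(0)|^2$ against the contribution of $\partial_\alpha(\bs\varphi^\alpha_{k,\omega})^{p-1}|_{\alpha=0}$; both involve the value and derivative of $\varphi_0$ at the vertex and the translation parameters $c_k$ (whose $\alpha$-expansion is $c_k\sim \tfrac{\alpha}{(2k-N)\sqrt{m^2-\omega^2}}+O(\alpha^3)$ from \eqref{Phi_k}), and a priori they could partially cancel. The saving feature, as in \cite{AngGol18a}, is that the first-order perturbation of the potential is an \emph{odd} function of $\alpha$-type translation acting on the \emph{even/odd} structure of $\bs\varphi_{0,k}$, so the leading contribution is purely the boundary term evaluated on the component weights $\tfrac{N-k}{k}$ versus $-1$, producing a nonzero multiple of $(N-2k)(\varphi_0'(0))^2$; this is exactly where the hypothesis $k\le[\tfrac{N-1}{2}]$ (equivalently $N-2k\ge1$) guarantees the sign is definite. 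I would lift this computation verbatim from the proof of \cite[Proposition 3]{AngGol18a}, noting only the cosmetic replacement of the NLS frequency parameter by $m^2-\omega^2$.
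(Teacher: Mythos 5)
Your overall skeleton (analyticity of $\lambda_k$, computing the first-order coefficient, then reading off the Morse index from Proposition \ref{perteigen}(iii) plus continuity of the lowest eigenvalue) matches the paper's strategy, but the key computation — which you yourself flag as the main obstacle — is carried out incorrectly, and the error is not cosmetic. Your claimed leading term $\lambda_k'(0)=C_{k,N}\,(\varphi_0'(0))^2/\|\bs\varphi_{0,k}\|^2_{L^2(\Gamma)}$ is identically zero: the half-soliton $\varphi_0$ has its maximum at the vertex, so $\varphi_0'(0)=0$, hence the unperturbed eigenvector $\bs\varphi_{0,k}$ (built from $\varphi_0'$) vanishes at $x=0$ and the boundary term $\alpha|v_1(0)|^2$ of the form contributes nothing at first order. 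In fact $\bs\varphi_{0,k}\in\dom(H_\alpha)$ for every $\alpha$ (its components and the sum of its derivatives vanish at the vertex), which is precisely what the paper exploits: writing $\la L^\alpha_{1,k}\bb f_k(\alpha),\bs\varphi_{0,k}\ra_{L^2(\Gamma)}$ in two ways and using $L^0_1\bs\varphi_{0,k}=\bb 0$, the entire first-order contribution comes from the $\alpha$-dependence of the potential, $L^\alpha_{1,k}\bs\varphi_{0,k}=p\big((\bs\varphi_0)^{p-1}-(\bs\varphi^\alpha_{k,\omega})^{p-1}\big)\bs\varphi_{0,k}=-\alpha p(p-1)(\bs\varphi_0)^{p-2}\bb g_{0,k}\bs\varphi_{0,k}+\bb O(\alpha^2)$, with $\bb g_{0,k}=\partial_\alpha\bs\varphi^\alpha_{k,\omega}|_{\alpha=0}$ proportional to $\varphi_0'$ componentwise. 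This yields
\[
\lambda_{0,k}=\frac{-2p\tfrac{N-k}{k(m^2-\omega^2)}\int_0^\infty(\varphi_0')^3\varphi_0^{p-2}\,dx}{\|\bs\varphi_{0,k}\|^2_{L^2(\Gamma)}}>0 ,
\]
the positivity coming solely from $\varphi_0'<0$ on $\mathbb{R}_+$ (so the integral is negative), not from any factor $(N-2k)$; in the actual computation the $(N-2k)$ factors cancel, so your sign mechanism "$(N-2k)\ge 1$ guarantees definiteness" is also not what makes the argument work.

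So the gap is concrete: the term you identify as the leading one vanishes, the term you dismiss as subleading (the $\alpha$-derivative of $(\bs\varphi^\alpha_{k,\omega})^{p-1}$) is the only nonzero first-order contribution, and the sign analysis must be redone along the lines above. Once $\lambda_{0,k}>0$ is established correctly, your concluding Morse-index count ($n=2$ for small $\alpha<0$, $n=1$ for small $\alpha>0$) is fine and agrees with the paper.
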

\begin{proof} From Taylor's theorem we have the  expansions
\begin{equation}\label{decomp1}
\lambda_k(\alpha)=\lambda_{0,k} \alpha+ O(\alpha^2)\quad\text{and}\quad \bb f_k(\alpha)=\bs{\varphi}_{0,k}+ \alpha \bb f_{0,k}  +   \bb O(\alpha^2),
\end{equation}
where  $\lambda_{0,k}=\lambda'_k(0)\in \mathbb R$ and $\bb f_{0,k}=\partial_\alpha \bb f_k(\alpha)|_{\alpha=0}\in L^2_k(\Gamma)$.  The desired result will follow  if we show that $\lambda_{0,k}>0$.  We compute $\la L^\alpha_{1,k} \bb f_k(\alpha), \bs{\varphi}_{0,k}\ra_{L^2(\Gamma)}$ in two different ways.

Note that for  $\bs{\varphi}_{k,\omega}^\alpha$ defined by \eqref{Phi_k} we have
\begin{equation}\label{1a}
\begin{split}
&\bs{\varphi}_{k,\omega}^\alpha=\bs{\varphi}_{0}+\alpha\bb g_{0,k}+ \bb O(\alpha^2),\\ \bb g_{0,k}=\partial_\alpha\bs{\varphi}_{k,\omega}^\alpha|_{\alpha=0}&=\frac{2}{(p-1)(N-2k)(m^2-\omega^2)}\left(\underset{\bf 1}{\varphi'_{0}},\ldots, \underset{\bf k}{\varphi'_{0}},\underset{\bf k+1}{-\varphi'_{0}},\ldots,\underset{\bf N}{-\varphi'_{0}}\right).
\end{split}
\end{equation}
 From \eqref{decomp1} we obtain
\begin{equation}\label{1}
\la L^\alpha_{1,k} \bb f_k(\alpha), \bs{\varphi}_{0,k}\ra_{L^2(\Gamma)}=\lambda_{0,k} \alpha\|\bs{\varphi}_{0,k}\|^2_{L^2(\Gamma)}+ O(\alpha^2).
\end{equation}
By  $L^0_{1}\bs{\varphi}_{0,k}=\bb 0$ and \eqref{decomp1} we get
\begin{equation}\label{2} L^\alpha_{1,k}\bs{\varphi}_{0,k}=p\left((\bs{\varphi}_0)^{p-1}-(\bs{\varphi}_{k,\omega}^\alpha)^{p-1}\right)\bs{\varphi}_{0,k}=-\alpha p(p-1)(\bs{\varphi}_{0})^{p-2}\bb g_{0,k}\bs{\varphi}_{0,k}+ \bb O(\alpha^2).
\end{equation}
The operations in  the last equality are  componentwise.
Equations \eqref{2},  \eqref{1a}, and $\bs{\varphi}_{0,k}\in \dom(H_\alpha)$ lead to
\begin{equation}\label{3}
\begin{split}
&\la L^\alpha_{1,k} \bb f_k(\alpha), \bs{\varphi}_{0,k}\ra_{L^2(\Gamma)}
=\la\bb f_k(\alpha),  L^\alpha_{1,k}\bs{\varphi}_{0,k}\ra_{L^2(\Gamma)}=-\la\bs{\varphi}_{0,k}, \alpha p(p-1)(\bs{\varphi}_{0})^{p-2}\bb g_{0,k}\bs{\varphi}_{0,k}\ra_{L^2(\Gamma)}+O(\alpha^2)\\&=-\alpha p(p-1)\left(\tfrac{(N-k)^2}{k}-(N-k)\right)\frac{2}{(p-1)(N-2k)(m^2-\omega^2)}\int\limits_0^\infty(\varphi'_0)^3\varphi_0^{p-2}dx+O(\alpha^2)\\&=-2\alpha p\frac{N-k}{k(m^2-\omega^2)}\int\limits_0^\infty(\varphi'_0)^3\varphi_0^{p-2}dx+O(\alpha^2).
\end{split}
\end{equation}
Finally, combining \eqref{3} and \eqref{1}, we obtain
$$\lambda_{0,k}=\dfrac{-2p\frac{N-k}{k(m^2-\omega^2)}\int\limits_0^\infty(\varphi'_0)^3\varphi_0^{p-2}dx}{\|\bs{\varphi}_{0,k}\|^2_{L^2(\Gamma)}}+O(\alpha).$$
It follows that $\lambda_{0,k}$ is positive for sufficiently small $|\alpha|$ (due to the negativity of $\varphi'_0$ on $\mathbb{R}_+$), which in view of \eqref{decomp1} ends the proof.
\end{proof}

Summarizing  the above results, we obtain the following characterization of the negative spectrum of   $L^\alpha_{1,k}$. 
\begin{proposition}\label{n(L_k)}
\begin{itemize}
\item[$(i)$]
Let $k\in\left\{1,\ldots,\left[\tfrac{N-1}{2}\right]\right\}$. 
\begin{enumerate}
\item[$(a)$] If  $\alpha>0$, then  $n(L^\alpha_{1,k}|_{L^2_k(\Gamma)})=1$.
\item[$(b)$] If $\alpha<0$, then  $n(L^\alpha_{1,k}|_{L^2_k(\Gamma)})=2$.
\end{enumerate}
\item[$(ii)$] Let $k=0$.
\begin{enumerate}
\item[$(a)$] If  $\alpha<0$, then   $n(L^\alpha_{1,0})=1$ in $L^2(\Gamma)$.
\item[$(b)$] If $\alpha>0$, then  $n(L^\alpha_{1,0}|_{L^2_{\eq}(\Gamma)})=1$.
\end{enumerate}
\end{itemize}
\end{proposition}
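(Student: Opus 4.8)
The plan is to assemble the statement from results already in hand; the genuinely new ingredients are a continuation-in-$\alpha$ argument and, for $k=0$, a reduction by symmetry to a scalar Schr\"odinger operator on $\mathbb R_+$. For $(i)$, fix $k\in\{1,\dots,[\tfrac{N-1}{2}]\}$. Since the potential $p(\bs\varphi^\alpha_{k,\omega})^{p-1}$ is invariant under permutations of the first $k$ and of the last $N-k$ edges, $L^2_k(\Gamma)$ reduces $L^\alpha_{1,k}$ (argue as in Lemma~\ref{well_X_k}), so $L^\alpha_{1,k}|_{L^2_k(\Gamma)}$ is a self-adjoint operator with $\sigma_{\ess}\subseteq\sigma_{\ess}(L^\alpha_{1,k})=[m^2-\omega^2,\infty)$ by Proposition~\ref{spect_L_j}$(v)$, in particular $\inf\sigma_{\ess}>0$. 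On each of the connected intervals $I_+=\{\,0<\alpha<(N-2k)\sqrt{m^2-\omega^2}\,\}$ and $I_-=\{\,-(N-2k)\sqrt{m^2-\omega^2}<\alpha<0\,\}$ one has $\ker(L^\alpha_{1,k}|_{L^2_k(\Gamma)})\subseteq\ker(L^\alpha_{1,k})=\{\mathbf 0\}$ by Proposition~\ref{grafoN2}$(ii)$, so $0\notin\sigma(L^\alpha_{1,k}|_{L^2_k(\Gamma)})$, and the family is real-analytic of type~(B) by Lemma~\ref{analici}. Hence the number of negative eigenvalues — all isolated of finite multiplicity since $\inf\sigma_{\ess}>0$, equal to the rank of the Riesz projection obtained by integrating the resolvent along a fixed contour around $[\inf\sigma,0)$, and therefore continuous in $\alpha$ — is locally constant, hence constant, on $I_+$ and on $I_-$. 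By Proposition~\ref{signeigen} this value is $1$ for small $\alpha>0$ and $2$ for small $\alpha<0$, which proves $(i)$.

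For $(ii)$, $k=0$ and $\bs\varphi^\alpha_{0,\omega}=(\phi,\dots,\phi)$ with $\phi$ the single-component profile from \eqref{Phi_k}. Decomposing $L^2(\Gamma)=L^2_{\eq}(\Gamma)\oplus\{\bb w:\sum_j w_j=0\}$ and using that the potential is the same on every edge, one checks (as in the reduction of $\dom(H_\alpha)$) that $L^\alpha_{1,0}$ is unitarily equivalent to $h_R\oplus h_D^{\oplus(N-1)}$, where $h_R=-\partial_x^2+(m^2-\omega^2)-p\phi^{p-1}$ on $L^2(\mathbb R_+)$ with the Robin condition $v'(0)=\tfrac\alpha N v(0)$ and $h_D$ is the same differential expression with the Dirichlet condition $v(0)=0$; since the potential decays at $+\infty$, $\sigma_{\ess}(h_R)=\sigma_{\ess}(h_D)=[m^2-\omega^2,\infty)$. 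In every admissible case $\phi$ satisfies the Robin condition (this is exactly $\bs\varphi^\alpha_{0,\omega}\in\dom(H_\alpha)$) and $h_R\phi=(1-p)\phi^p$, so $\la h_R\phi,\phi\ra<0$ and $n(h_R)\ge1$. For $(ii)(a)$, $\alpha<0$ forces the shift $c_0>0$ in \eqref{Phi_k}, so $\phi$ is strictly decreasing and $\phi'<0$ on $[0,\infty)$; differentiating the profile equation gives $-u''+\big((m^2-\omega^2)-p\phi^{p-1}\big)u=0$ for $u=\phi'$, and the substitution $\psi=\phi'\eta$ yields $\int_0^\infty\big(|\psi'|^2+((m^2-\omega^2)-p\phi^{p-1})|\psi|^2\big)\,dx=\int_0^\infty(\phi')^2|\eta'|^2\,dx\ge0$ for $\psi\in C_0^\infty(\mathbb R_+)$, whence $h_D\ge0$ and $n(h_D)=0$. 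As $\dom(q_{h_R})=H^1(\mathbb R_+)$ contains $\dom(q_{h_D})=H^1_0(\mathbb R_+)$ with codimension one and the two forms agree there, the min-max principle gives $n(h_R)\le n(h_D)+1=1$; hence $n(h_R)=1$ and $n(L^\alpha_{1,0})=1$ in $L^2(\Gamma)$.

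For $(ii)(b)$, $\alpha>0$, it suffices to show $n(L^\alpha_{1,0}|_{L^2_{\eq}(\Gamma)})=n(h_R)=1$. Now $\phi$ contains the soliton peak in the interior of $\mathbb R_+$ and $\phi'$ changes sign, so the argument for $h_D$ is unavailable and I would proceed by continuation on $\alpha\in[0,N\sqrt{m^2-\omega^2})$: the family $h_R$ is real-analytic of type~(B) with fixed essential spectrum $[m^2-\omega^2,\infty)$ and, by Proposition~\ref{grafoN2}$(ii)$, with trivial kernel for $\alpha>0$; at $\alpha=0$, $h_R$ is the Neumann realization of $-\partial_x^2+(m^2-\omega^2)-p\varphi_0^{p-1}$ with $\varphi_0$ the (even) soliton, which by even reflection is unitarily equivalent to the restriction to even functions of $-\partial_x^2+(m^2-\omega^2)-p\varphi_0^{p-1}$ on $L^2(\mathbb R)$; the latter operator has a single negative eigenvalue with positive even eigenfunction and kernel spanned by the odd function $\varphi_0'$, so $n(h_R)=1$ and $\ker(h_R)=\{0\}$ at $\alpha=0$. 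Therefore $0\notin\sigma(h_R)$ on all of $[0,N\sqrt{m^2-\omega^2})$ and $n(h_R)\equiv1$, proving $(ii)(b)$.

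The main obstacle is controlling the Morse index outside the perturbative regime, i.e. the upper bounds $n(L^\alpha_{1,k}|_{L^2_k(\Gamma)})\le2$ (resp. $\le1$) for \emph{all} admissible $\alpha$ in $(i)$ and $n(h_R)\le1$ for $\alpha>0$ in $(ii)(b)$: once the profile contains the soliton peak, the nonvanishing-zero-mode argument that works for $(ii)(a)$ breaks down, and one must rely on the continuation argument, whose essential inputs are the triviality of the relevant kernels (Proposition~\ref{grafoN2}$(ii)$) and the fact that the essential spectrum is pinned at $[m^2-\omega^2,\infty)$ uniformly in $\alpha$ (Proposition~\ref{spect_L_j}$(v)$). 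Alternatively, since for $k=0$ the operator $L^\alpha_{1,0}$ coincides with the one analysed in the NLS setting, part $(ii)$ may simply be deduced from \cite[Theorem 3.4]{AngGol18a} and \cite[Proposition 3.17]{AngGol18}.
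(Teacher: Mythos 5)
Your part (i) follows essentially the same route as the paper, which defers to \cite[Proposition 4]{AngGol18a}: a continuation argument based on Riesz projections, using the analyticity of the family (Lemma \ref{analici}), the fixed essential spectrum $[m^2-\omega^2,\infty)$, the triviality of the kernel from Proposition \ref{grafoN2}$(ii)$, and the small-$\alpha$ count from Proposition \ref{signeigen}; that part needs no further comment. Part (ii) is where you genuinely diverge. The paper argues at each fixed $\alpha$ via extension theory: for $(ii)(a)$ it views $L^\alpha_{1,0}$ as a self-adjoint extension of a nonnegative symmetric operator with deficiency indices $(1,1)$ (the restriction with $v_1(0)=\dots=v_N(0)=0$, $\sum_j v_j'(0)=0$), and for $(ii)(b)$ it does the same for $L^\alpha_{1,0}|_{L^2_{\eq}(\Gamma)}$ using the symmetric restriction vanishing at the peak $b_0$; in both cases $n\le 1$ comes from \cite[\S 14, Theorem 16]{Nai68} and $n\ge 1$ from testing with the profile. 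You instead exploit the permutation symmetry to diagonalize $L^\alpha_{1,0}\cong h_R\oplus h_D^{\oplus(N-1)}$ --- this unitary equivalence is correct: in an orthonormal basis of $\mathbb{C}^N$ containing $\tfrac{1}{\sqrt N}(1,\dots,1)$ the vertex conditions become the Robin condition $f_1'(0)=\tfrac{\alpha}{N}f_1(0)$ for the symmetric component and Dirichlet conditions for the rest --- then for $(ii)(a)$ you prove $h_D\ge 0$ by the $\phi'$-substitution (valid since $\phi'<0$ on $[0,\infty)$ when $\alpha<0$) and get $n(h_R)\le 1$ from the codimension-one form comparison, while for $(ii)(b)$ you run a second continuation in $\alpha$ down to the Neumann half-soliton operator at $\alpha=0$. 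Both routes are sound. The paper's extension-theoretic argument works at fixed $\alpha$ with no analytic-family input (in particular it settles $(ii)(b)$, where the oscillation argument is unavailable, in one step) and is exactly what the subsequent remark reuses to sharpen \cite{AngGol18}; your reduction buys a very transparent picture in terms of scalar half-line Schr\"odinger operators and avoids deficiency-index machinery, at the price of needing the real-analyticity of the scalar Robin family with $\alpha$-dependent potential (asserted by analogy with Lemma \ref{analici}, which the paper states only for $k\ge 1$; it is routine but should be recorded) together with the kernel triviality for all admissible $\alpha>0$. One caveat on your closing sentence: $(ii)(b)$ cannot simply be cited away to \cite{AngGol18a, AngGol18} --- the paper's remark presents precisely this case as new and uses it to complete Theorem 1.1 of \cite{AngGol18} --- but your own continuation argument does cover it, so the proof stands.
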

\begin{proof}
$(i)$\,
The proof of the proposition  is analogous to the one of   \cite[Proposition 4]{AngGol18a}. It uses Proposition \ref{signeigen} and a classical continuation argument based on the Riesz projection. 

$(ii)$ $(a)$\,
Observe that the operator $L^\alpha_{1,0}$ is the self-adjoint extension of the non-negative symmetric operator 
\begin{align*}
&L_{0}\bb v= L^\alpha_{1,0}\bb v,\quad \bb v\in\dom(L_{0}),\\
&\dom(L_0)= \Big\{\mathbf{v}\in
H^2(\Gamma): v_1(0)=\ldots=v_N(0)=0, \sum\limits_{j=1}^N
v_j'(0)=0  \Big\},
\end{align*}
with deficiency indices $n_\pm(L_{0})=1$  (see the proof of \cite[Theorem 3.12-$(iii)$]{AngGol18}). Hence $n(L^\alpha_{1,0})\leq 1$ by \cite[\S 14, Theorem 16]{Nai68}. Since   $\la L^\alpha_{1,0}\bs{\varphi}_{0,\omega}^\alpha,\bs{\varphi}_{0,\omega}^\alpha\ra_{L^2(\Gamma)}=-(p-1)\|\bs{\varphi}_{0,\omega}^\alpha\|^{p+1}_{L^{p+1}(\Gamma)}<0$, we get the equality $n(L^\alpha_{1,0})=1$.

$(b)$\, Consider the restriction $L^\alpha_{1,0}|_{L^2_{\eq}(\Gamma)}$ as the self-adjoint extension of the following symmetric operator 
\begin{align*}
&\tilde{L}_0\bb v= L^\alpha_{1,0}\bb v,\quad \bb v\in\dom(\tilde{L}_0),\\
&\dom(\tilde{L}_0)= \left \{\mathbf{v}\in\dom(H_\alpha)\cap L^2_{\eq}(\Gamma): v_1(b_0)=\ldots=v_N(b_0)=0 \right \},
\end{align*}
where $b_0=-\frac{2}{(p-1)\sqrt{m^2-\omega^2}}c_0,$ and $c_0$ is defined in \eqref{Phi_k}. 
Let us prove that $\tilde{L}_0$ is non-negative.
Observe  that every component of the vector $\mathbf{v}=(v_j)_{j=1}^N\in H^2(\Gamma)$ satisfies the identity
\begin{equation*}\label{identity_graph_k}
-v_j''+\omega v_j-p(\tilde{\varphi}_{0,j})^{p-1}v_j=
\frac{-1}{\tilde{\varphi}_{0,j}'}\frac{d}{dx}\left[(\tilde{\varphi}_{0,j}')^2\frac{d}{dx}\left(\frac{v_j}{\tilde{\varphi}_{0,j}'}\right)\right],\,\, x\in\mathbb{R}_+\setminus \{b_0\}.
\end{equation*}
Using the above equality and integrating by parts, we get for
 $\mathbf{v}\in \dom({\tilde L}_{0})$,
 \begin{equation*}\begin{split}
&\la\tilde{L}_0\mathbf{v},\mathbf{v}\ra_{L^2_{\eq}(\Gamma)}=N\Big( \int\limits_{0}^{b_0-} + \int\limits_{b_0+}^{+\infty}\Big)
(\tilde{\varphi}_{0,1} ')^2\left|\frac{d}{dx}\left(\frac{v_1}{\tilde{\varphi}_{0,1} '}\right)\right|^2dx+
N\left[-v_1'{\overline v}_1+|v_1|^2\frac{\tilde\varphi_{0,1}''}{\tilde\varphi_{0,1}'}\right]^{\infty}_{0}\\&+N\left[v_1'{\overline v}_1-|v_1|^2\frac{\tilde\varphi_{0,1}''}{\tilde\varphi_{0,1}'}\right]^{b_0+}_{b_0-}=N\Big( \int\limits_{0}^{b_0-} + \int\limits_{b_0+}^{+\infty}\Big)
(\tilde{\varphi}_{0,1} ')^2\left|\frac{d}{dx}\left(\frac{v_1}{\tilde{\varphi}_{0,1} '}\right)\right|^2dx\geq 0.
\end{split}
\end{equation*}

The deficiency indices of $\tilde{L}_0$ are $$n_\pm(\tilde{L}_0)=\dim(\dom(L^\alpha_{1,0}|_{L^2_{\eq}(\Gamma)}))-\dim(\dom(\tilde{L}_0)) = 1.$$ Therefore, $n(L^\alpha_{1,0}|_{L^2_{\eq}(\Gamma)})\leq 1$ by \cite[\S 14, Theorem 16]{Nai68}.
One also has  that  $\bs{\varphi}_{0,\omega}^\alpha\in L^2_{\eq}(\Gamma)$ and $\la L^\alpha_{1,0}\bs{\varphi}_{0,\omega}^\alpha,\bs{\varphi}_{0,\omega}^\alpha\ra_{L^2(\Gamma)}=-(p-1)\|\bs{\varphi}_{0,\omega}^\alpha\|^{p+1}_{L^{p+1}(\Gamma)}<0$. Hence  $n(L^\alpha_{1,0}|_{L^2_{\eq}(\Gamma)})=1$.
 \end{proof}
\begin{remark}
In \cite{AngGol18}, we considered the NLS equation  with the $\delta$-interaction on the star graph $\Gamma$. In particular, we proved \cite[Theorem 1.1]{AngGol18} on the orbital instability of the profile $\bb \Phi_{\alpha,\delta}=\bs{\varphi}_{0,\omega}^\alpha$ (where $m^2-\omega^2$ has to be substituted by $\omega$). Using the proof of  item $(ii)-(b)$, we may complete the result of Theorem 1.1 in \cite{AngGol18}. 
  Indeed, one may deduce analogously that $n(\bb L_{1,\alpha}|_{L^2_{\eq}(\Gamma)})=1$, where  the operator $\bb L_{1,\alpha}$  is defined in \cite[Subsection 3.1]{AngGol18}.  Using Proposition 3.17 and Proposition 3.19 $(ii)-2),3)$ in \cite{AngGol18}, we can affirm for $\alpha>0$  the following two results.
\begin{itemize} 
\item[$(i)$] Let $3<p<5$, then there exists $\omega_2>\frac{\alpha^2}{N^2}$ such that $e^{i\omega t}\bb \Phi_{\alpha,\delta}$ is orbitally unstable in $\mc E_{\eq}(\Gamma)$ and therefore in $\mc E(\Gamma)$ for $\omega\in(\frac{\alpha^2}{N^2}, \omega_2)$. Moreover,  $e^{i\omega t}\bb \Phi_{\alpha,\delta}$ is orbitally stable in $\mc E_{\eq}(\Gamma)$ for $\omega>\omega_2$.
\item[$(ii)$]   Let $p\geq 5$, then  $e^{i\omega t}\bb \Phi_{\alpha,\delta}$ is orbitally unstable in $\mc E_{\eq}(\Gamma)$, and therefore in $\mc E(\Gamma)$ for $\omega>\frac{\alpha^2}{N^2}$.
\end{itemize}  
   \end{remark}
   \begin{remark} Using the approach of \cite{Kai17} (see Theorem 3.1), one may show that in $L^2(\Gamma)$,
   $$n(L_{1,k}^\alpha)=\left\{\begin{array}{c}
   k+1,\,\text{for}\, \alpha<0,\\
   N-k, \,\text{for}\, \alpha>0,
   \end{array}\right.\qquad k\in\left\{0,\ldots,\left[\tfrac{N-1}{2}\right]\right\}.$$
   \end{remark}
\subsection{Slope condition}
Let  $k\in\left\{0,\ldots,\left[\tfrac{N-1}{2}\right]\right\}$. In this subsection we study the sign of $\partial_\omega Q(\bb \Phi_{k,\omega}^\alpha)$. 
By the definition of $Q$ and $\bb \Phi_{k,\omega}^\alpha=(\bs{\varphi}_{k,\omega}^\alpha, i\omega \bs{\varphi}_{k,\omega}^\alpha)$, we get  (see \cite[Proposition 5]{AngGol18a})
\begin{equation}\label{qu}
Q(\bb \Phi_{k,\omega}^\alpha)=-\omega\|\bs{\varphi}_{k,\omega}^\alpha\|^2_{L^2(\Gamma)}=Q_{k,1}(\omega)Q_{k,2}(\omega),
\end{equation}
where 
\begin{equation}\label{q1q2}
\begin{split}
&Q_{k,1}(\omega)=-2\omega \left(\frac{p+1}{2}\right)^{\tfrac 2{p-1}}\frac{(m^2-\omega^2)^{\tfrac {5-p}{2(p-1)}}}{p-1},\\
Q_{k,2}(\omega)=&\int\limits_{\tfrac{-\alpha}{(2k-N)\sqrt{m^2-\omega^2}}}^1k(1-t^2)^{\tfrac {3-p}{p-1}}dt+\int\limits_{\tfrac{\alpha}{(2k-N)\sqrt{m^2-\omega^2}}}^1(N-k)(1-t^2)^{\tfrac {3-p}{p-1}}dt.
\end{split}
\end{equation}
Using the above formulas for $Q(\bb \Phi_{k,\omega}^\alpha)$, we obtain the result.
\begin{lemma}\label{slope}
Let $1<p<5$.
\begin{itemize}
\item[$(i)$] $\partial_\omega Q(\bb \Phi_{k,\omega}^\alpha)>0$ for 
$$\alpha<0,\quad\text{and}\quad \omega\in\Big(-m,\frac{-m\sqrt{p-1}}{2}\Big)\cup\Big(\frac{m\sqrt{p-1}}{2}, m\Big).$$
\item[$(ii)$] $\partial_\omega Q(\bb \Phi_{k,\omega}^\alpha)<0$ for 
$$\alpha>0,\quad\text{and}\quad \omega\in\Big(\frac{-m\sqrt{p-1}}{2},0\Big)\cup\Big(0,\frac{m\sqrt{p-1}}{2}\Big).$$
\end{itemize}
\end{lemma}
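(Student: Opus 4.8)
The plan is to differentiate the factorization $Q(\bb\Phi_{k,\omega}^\alpha)=Q_{k,1}(\omega)Q_{k,2}(\omega)$ from \eqref{qu}--\eqref{q1q2} by the product rule,
\[\partial_\omega Q(\bb\Phi_{k,\omega}^\alpha)=Q_{k,1}'(\omega)\,Q_{k,2}(\omega)+Q_{k,1}(\omega)\,Q_{k,2}'(\omega),\]
and then to read off the sign of the right-hand side from the signs of the four factors on the indicated ranges of $\omega$. Throughout I keep in mind the standing assumption $m^2-\omega^2>\tfrac{\alpha^2}{(N-2k)^2}$ under which $\bs\varphi_{k,\omega}^\alpha$ is defined. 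Setting $s:=m^2-\omega^2>0$ and $a:=a(\omega):=\tfrac{\alpha}{(2k-N)\sqrt s}$, this assumption reads exactly $|a|<1$; it makes $Q_{k,2}$ well defined (the integrand $g(t):=(1-t^2)^{\frac{3-p}{p-1}}$ is positive on $(-1,1)$, and integrable at $t=1$ since $\tfrac{3-p}{p-1}>-1$ for every $p>1$) and keeps the two lower limits $-a$, $a$ inside $(-1,1)$. As each of the two integrals in $Q_{k,2}$ then runs from a point of $(-1,1)$ up to $1$ over a positive function, I would first record that $Q_{k,2}(\omega)>0$ for all admissible $\omega$.

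For the first factor, a direct computation (writing $Q_{k,1}=-2\omega A\,s^{\beta}$ with $A=\left(\tfrac{p+1}{2}\right)^{2/(p-1)}/(p-1)>0$ and $\beta=\tfrac{5-p}{2(p-1)}$) yields
\[Q_{k,1}'(\omega)=-2A\,s^{\beta-1}\Big(m^2-\tfrac{4\omega^2}{p-1}\Big),\]
so the sign of $Q_{k,1}'(\omega)$ is opposite to that of $m^2-\tfrac{4\omega^2}{p-1}$: negative for $|\omega|<\tfrac{m\sqrt{p-1}}{2}$ and positive for $\tfrac{m\sqrt{p-1}}{2}<|\omega|<m$. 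The hypothesis $1<p<5$ enters precisely here, guaranteeing $0<\tfrac{m\sqrt{p-1}}{2}<m$ so that both intervals are nonempty. Since $s^{\beta}>0$ and $A>0$, the factor $Q_{k,1}(\omega)$ itself has the sign of $-\omega$.

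For the second factor I differentiate the two integrals by the fundamental theorem of calculus (only the lower limits depend on $\omega$), using $a'(\omega)=\tfrac{a\omega}{s}$ obtained from $a(\omega)=\tfrac{\alpha}{2k-N}s^{-1/2}$. The two endpoint contributions coming from $-a$ and $a$ combine through the evenness $g(-a)=g(a)$ into $Q_{k,2}'(\omega)=a'(\omega)\,g(a)\,(2k-N)$, and substituting $a(2k-N)=\tfrac{\alpha}{\sqrt s}$ gives the clean formula
\[Q_{k,2}'(\omega)=\frac{\alpha\omega}{s^{3/2}}\,(1-a^2)^{\frac{3-p}{p-1}},\]
so the sign of $Q_{k,2}'(\omega)$ equals the sign of $\alpha\omega$. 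Verifying that the two endpoint terms reinforce rather than cancel, so that the factor $2k-N$ appears and is then absorbed into the definition of $a$, is the only genuinely delicate point of the argument; the evenness of $g$ is what makes it work, and once it is settled the rest is just sign bookkeeping.

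The lemma now follows by inspection of the four sign patterns. If $\alpha<0$ and $\omega\in\left(\tfrac{m\sqrt{p-1}}{2},m\right)$, then $Q_{k,1}'>0$, $Q_{k,2}>0$, $Q_{k,1}<0$, $Q_{k,2}'<0$, whence $\partial_\omega Q>0$; if $\alpha<0$ and $\omega\in\left(-m,-\tfrac{m\sqrt{p-1}}{2}\right)$, then $Q_{k,1}'$, $Q_{k,2}$, $Q_{k,1}$, $Q_{k,2}'$ are all positive, whence again $\partial_\omega Q>0$. This proves $(i)$. If $\alpha>0$ and $\omega\in\left(0,\tfrac{m\sqrt{p-1}}{2}\right)$, then $Q_{k,1}'<0$, $Q_{k,2}>0$, $Q_{k,1}<0$, $Q_{k,2}'>0$, whence $\partial_\omega Q<0$; if $\alpha>0$ and $\omega\in\left(-\tfrac{m\sqrt{p-1}}{2},0\right)$, then $Q_{k,1}'<0$, $Q_{k,2}>0$, $Q_{k,1}>0$, $Q_{k,2}'<0$, whence $\partial_\omega Q<0$. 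This proves $(ii)$.
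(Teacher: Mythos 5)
Your proposal is correct and follows essentially the same route as the paper: differentiate the factorization $Q(\bb\Phi_{k,\omega}^\alpha)=Q_{k,1}(\omega)Q_{k,2}(\omega)$ by the product rule, compute $Q_{k,1}'$ and $Q_{k,2}'$ (your formulas agree with those in the paper), note $Q_{k,2}>0$, and conclude by the same four-case sign analysis. The only difference is that you spell out the Leibniz-rule/evenness computation behind $Q_{k,2}'$, which the paper states without detail.
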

\begin{proof}
By \eqref{qu}, $\partial_\omega Q(\bb \Phi_{k,\omega}^\alpha)=Q'_{k,1}(\omega)Q_{k,2}(\omega)+Q_{k,1}(\omega)Q'_{k,2}(\omega).$  From \eqref{q1q2} we get
\begin{equation*}
\begin{split}
&Q'_{k,1}(\omega)=\frac{2}{p-1}\left(\frac{p+1}{2}\right)^{\tfrac 2{p-1}}(m^2-\omega^2)^{\frac{7-3p}{2(p-1)}}\Big(\frac{4\omega^2}{p-1}-m^2\Big),\\
&Q'_{k,2}(\omega)=\Big(1-\frac{\alpha^2}{(2k-N)^2(m^2-\omega^2)}\Big)^{\tfrac{3-p}{p-1}}\frac{\alpha\omega}{(m^2-\omega^2)^{3/2}}.
\end{split}
\end{equation*}

Observe that $Q_{k,2}(\omega)>0$ for any $\omega$ and $\alpha$.
It is easily seen that  
$$\left\{\begin{array}{c}
Q'_{k,1}(\omega)>0,\\
Q_{k,2}(\omega)>0,\\
Q'_{k,2}(\omega)<0,\\
Q_{k,1}(\omega)<0
\end{array}\right.\quad\Longleftrightarrow\quad \left\{\begin{array}{c}\alpha<0,\\
\omega\in\left(\frac{m\sqrt{p-1}}{2}, m\right)\end{array}\right.$$  implies $\partial_\omega Q(\bb \Phi_{k,\omega}^\alpha)>0$.
Analogously  $$\left\{\begin{array}{c}
Q'_{k,1}(\omega)>0,\\
Q_{k,2}(\omega)>0,\\
Q'_{k,2}(\omega)>0,\\
Q_{k,1}(\omega)>0
\end{array}\right.\quad\Longleftrightarrow\quad \left\{\begin{array}{c}\alpha<0,\\
\omega\in\left(-m, \frac{-m\sqrt{p-1}}{2}\right)\end{array}\right.$$  yields $\partial_\omega Q(\bb \Phi_{k,\omega}^\alpha)>0$. Finally, $(i)$ holds.

To show $(ii)$, observe that 
$$\left\{\begin{array}{c}
Q'_{k,1}(\omega)<0,\\
Q_{k,2}(\omega)>0,\\
Q'_{k,2}(\omega)>0,\\
Q_{k,1}(\omega)<0
\end{array}\right. \quad\Longleftrightarrow\quad \left\{\begin{array}{c}\alpha>0,\\
\omega\in\left(0,\frac{m\sqrt{p-1}}{2}\right)\end{array}\right.$$ implies $\partial_\omega Q(\bb \Phi_{k,\omega}^\alpha)<0$, and  
$$\left\{\begin{array}{c}
Q'_{k,1}(\omega)<0,\\
Q_{k,2}(\omega)>0,\\
Q'_{k,2}(\omega)<0,\\
Q_{k,1}(\omega)>0
\end{array}\right.\quad\Longleftrightarrow\quad \left\{\begin{array}{c}\alpha>0,\\
\omega\in\left(\frac{-m\sqrt{p-1}}{2},0\right)\end{array}\right.$$ yields $\partial_\omega Q(\bb \Phi_{k,\omega}^\alpha)<0$. 
\end{proof}
Combining Lemma \ref{well_X_k}, Theorem \ref{thm_stabil}, Proposition \ref{spec}, Proposition \ref{spect_L_j},  Proposition \ref{grafoN2}, and Proposition \ref{n(L_k)}, we get stability/instability result.
\begin{theorem}\label{main}
Assume that  $1<p<5$ and $m>0$.
\begin{itemize}
\item[$(i)$] Let $k\in\left\{1,\ldots,\left[\tfrac{N-1}{2}\right]\right\}$ 
and $m^2-\omega^2>\tfrac{\alpha^2}{(N-2k)^2}$. 
\begin{itemize}
\item[$(a)$] For $\alpha>0$ and $\omega\in\left(\frac{-m\sqrt{p-1}}{2},0\right)\cup\left(0,\frac{m\sqrt{p-1}}{2}\right)$ the standing wave $e^{i\omega t}\bs{\varphi}_{k,\omega}^\alpha$ is orbitally unstable in $X_k$ and therefore in $X$.
\item[$(b)$] For $\alpha<0$ and $\omega\in\left(-m,\frac{-m\sqrt{p-1}}{2}\right)\cup\left(\frac{m\sqrt{p-1}}{2}, m\right)$ (if such $\omega$ exists) the standing wave $e^{i\omega t}\bs{\varphi}_{k,\omega}^\alpha$ is linearly unstable.
\end{itemize} 
\item[$(ii)$] Let $k=0$ and $m^2-\omega^2>\tfrac{\alpha^2}{N^2}$.
\begin{itemize}
\item[$(a)$] For $\alpha>0$ and $\omega\in\left(\frac{-m\sqrt{p-1}}{2},0\right)\cup\left(0,\frac{m\sqrt{p-1}}{2}\right)$ the standing wave $e^{i\omega t}\bs{\varphi}_{0,\omega}^\alpha$ is orbitally unstable in $X_{\eq}$ and therefore in $X$.
\item[$(b)$]  For $\alpha<0$ and $\omega\in\left(-m,\frac{-m\sqrt{p-1}}{2}\right)\cup\left(\frac{m\sqrt{p-1}}{2}, m\right)$ (if such $\omega$ exists)  the standing wave $e^{i\omega t}\bs{\varphi}_{0,\omega}^\alpha$ is orbitally stable in $X$.
\end{itemize}
\end{itemize}
\end{theorem}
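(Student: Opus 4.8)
The plan is to verify the abstract hypotheses $(A1)$--$(A4)$ of Theorem \ref{thm_stabil} for the operator $\mc L^\alpha_k$ restricted to the appropriate invariant subspace, to read off the sign of $\partial_\omega Q(\bb\Phi^\alpha_{k,\omega})$ from Lemma \ref{slope}, and then to invoke Theorem \ref{thm_stabil}. The relevant subspace is $X_k$ in cases $(i)(a)$ and $(i)(b)$, $X_{\eq}$ in case $(ii)(a)$, and the whole space $X$ in case $(ii)(b)$. In each of these the Cauchy flow of \eqref{Hamil} is well-posed and subspace-preserving by Lemma \ref{well_X_k}, and $\mc L^\alpha_k$ itself leaves $X_k$ invariant (multiplication by $(\bs\varphi^\alpha_{k,\omega})^{p-1}$ and the resolvent of $H_\alpha$ both preserve $L^2_k(\Gamma)$, as established in Section \ref{sec2}), so $X_k$ reduces $\mc L^\alpha_k$ and the Grillakis--Shatah--Strauss machinery can be run inside it.

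First I would assemble the spectral picture. Proposition \ref{spec} gives $\ker(\mc L^\alpha_k)=\ker(L^\alpha_k)$, $n(\mc L^\alpha_k)=n(L^\alpha_k)$, and $\inf\sigma_{\ess}(L^\alpha_k)>0\Rightarrow\inf\sigma_{\ess}(\mc L^\alpha_k)>0$; the analogous relations hold for the restrictions to $X_k$, since $X_k$ is a reducing subspace. Proposition \ref{spect_L_j}$(vi)$ shows $\sigma_{\ess}(L^\alpha_k)=[\sigma_1,1]\cup[\sigma_2,\infty)$ with $0<\sigma_1<1<\sigma_2$, hence $\inf\sigma_{\ess}(\mc L^\alpha_k)>0$; below $\inf\sigma_{\ess}(\mc L^\alpha_k)$ the spectrum is discrete, and $0$ is an isolated eigenvalue by the kernel computation below, so $(A4)$ holds. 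Proposition \ref{grafoN2}$(iii)$ gives $\ker(L^\alpha_k)=\Span\{i\bb\Phi^\alpha_{k,\omega}\}$ with $i\bb\Phi^\alpha_{k,\omega}\in X_k$, so $(A3)$ holds (also on the restriction). Finally, the identity of Proposition \ref{spect_L_j}$(ii)$ holds verbatim on $L^2_k(\Gamma)$, and together with $L^\alpha_{2,k}\geq0$ from Proposition \ref{grafoN2}$(i)$ it yields $n(\mc L^\alpha_k)=n(L^\alpha_{1,k})+n(L^\alpha_{2,k})=n(L^\alpha_{1,k})$ on the chosen subspace. Proposition \ref{n(L_k)} then gives Morse index $1$ in cases $(i)(a)$, $(ii)(a)$, $(ii)(b)$, so $(A1)$ holds there, and Morse index $2$ in case $(i)(b)$, so $(A2)$ holds there.

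Next I would extract the slope: by Lemma \ref{slope}, in the $\omega$-ranges of cases $(i)(a)$ and $(ii)(a)$ (both with $\alpha>0$) one has $\partial_\omega Q(\bb\Phi^\alpha_{k,\omega})<0$, and in those of $(i)(b)$ and $(ii)(b)$ (both with $\alpha<0$) one has $\partial_\omega Q(\bb\Phi^\alpha_{k,\omega})>0$. Then I would conclude by Theorem \ref{thm_stabil}. In case $(i)(a)$: $(A1)$, $(A3)$, $(A4)$ hold in $X_k$ and $\partial_\omega Q<0$, so part $(ii)$ of Theorem \ref{thm_stabil} gives orbital instability in $X_k$; since every solution issuing from $X_k$ stays in $X_k$, this is instability in $X$. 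Case $(ii)(a)$ is identical with $X_{\eq}$ in place of $X_k$. In case $(i)(b)$: $(A2)$, $(A3)$, $(A4)$ hold in $X_k$ and $\partial_\omega Q>0$, so the second bullet of part $(i)$ of Theorem \ref{thm_stabil} gives linear instability (of the linearized flow on $X_k$, hence on $X$). In case $(ii)(b)$: $(A1)$, $(A3)$, $(A4)$ hold on the \emph{whole} space $X$ and $\partial_\omega Q>0$, so the first bullet of part $(i)$ of Theorem \ref{thm_stabil} gives orbital stability in $X$.

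The main obstacle is bookkeeping rather than a single hard estimate. One must check each of $(A1)$--$(A4)$ in the \emph{correct} reducing subspace --- which is precisely why the Morse-index computations of Proposition \ref{n(L_k)} are carried out in $L^2_k(\Gamma)$ and $L^2_{\eq}(\Gamma)$ rather than in $L^2(\Gamma)$ --- and one must respect the asymmetry between instability and stability: an instability witnessed by initial data in $X_k$ (or $X_{\eq}$) is automatically an instability in $X$, whereas the stability statement $(ii)(b)$ genuinely needs $(A1)$ for $\mc L^\alpha_0$ on all of $X$, available only because, for $k=0$ and $\alpha<0$, Proposition \ref{n(L_k)}$(ii)(a)$ gives $n(L^\alpha_{1,0})=1$ in $L^2(\Gamma)$ itself.
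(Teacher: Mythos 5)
Your proposal is correct and follows essentially the same route as the paper: verifying $(A3)$, $(A4)$ via Propositions \ref{spec}, \ref{spect_L_j}-$(vi)$, \ref{grafoN2}-$(iii)$, computing the Morse index on the appropriate subspace ($X_k$, $X_{\eq}$, or all of $X$) from Propositions \ref{spect_L_j}-$(ii)$, \ref{grafoN2}-$(i)$, and \ref{n(L_k)}, combining with the slope signs of Lemma \ref{slope}, and invoking Theorem \ref{thm_stabil} together with the well-posedness of Lemma \ref{well_X_k}. Your remark on the asymmetry between instability (inherited from the invariant subspace) and stability (requiring the Morse-index count on all of $X$ in case $(ii)(b)$) is exactly the bookkeeping the paper's proof performs.
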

\begin{proof}
Observe that, by Proposition \ref{spec}, Proposition \ref{spect_L_j}-$(vi)$, Proposition \ref{grafoN2}-$(iii)$, assumptions $(A3)$ and  $(A4)$  are satisfied.
\begin{itemize}
\item[$(i)$]

$(a)$\, By Proposition \ref{n(L_k)}-$(i)$, for  $\alpha>0$ we have  $n(L^\alpha_{1,k}|_{L^2_k(\Gamma)})=1$. If additionally  $\omega\in\left(\frac{-m\sqrt{p-1}}{2},0\right)\cup\left(0,\frac{m\sqrt{p-1}}{2}\right)$, then $\partial_\omega Q(\bb \Phi_{k,\omega}^\alpha)<0$ (see Lemma \ref{slope}-$(ii)$).

By Proposition \ref{spec}, \ref{grafoN2}-$(i)$ and  \ref{spect_L_j}-$(ii)$, we obtain $n(\mc L^\alpha_k|_{X_k})=1$.
Finally, by Theorem \ref{thm_stabil}-$(ii)$, we get orbital instability of  $e^{i\omega t}\bs{\varphi}_{k,\omega}^\alpha$
in $X_k$ since \eqref{Cauchy2} is well-posed in $X_k$ by Lemma \ref{well_X_k}. Consequently   $e^{i\omega t}\bs{\varphi}_{k,\omega}^\alpha$ is orbitally unstable in $X$.

$(b)$\, By Proposition \ref{n(L_k)}-$(i)$, for  $\alpha<0$ we have  $n(L^\alpha_{1,k}|_{L^2_k(\Gamma)})=2$ and analogously to the previous case  $n(\mc L^\alpha_k|_{X_k})=2$. If 
additionally $\omega\in\left(-m,\frac{-m\sqrt{p-1}}{2}\right)\cup\left(\frac{m\sqrt{p-1}}{2}, m\right)$, then $\partial_\omega Q(\bb \Phi_{k,\omega}^\alpha)>0$ (see Lemma \ref{slope}-$(i)$).
Finally, by Theorem \ref{thm_stabil}-$(i)$, we get linear instability of  $e^{i\omega t}\bs{\varphi}_{k,\omega}^\alpha$.

\item[$(ii)$] 

$(a)$\, By Proposition \ref{n(L_k)}-$(ii)$, for  $\alpha>0$ we have  $n(L^\alpha_{1,k}|_{L^2_{\eq}(\Gamma)})=1$, and therefore $n(\mc L^\alpha_k|_{X_{\eq}})=1$.  If additionally  $\omega\in\left(\frac{-m\sqrt{p-1}}{2},0\right)\cup\left(0,\frac{m\sqrt{p-1}}{2}\right)$, then $\partial_\omega Q(\bb \Phi_{0,\omega}^\alpha)<0$ (see Lemma \ref{slope}-$(ii)$). 

Finally, by Theorem \ref{thm_stabil}-$(ii)$, we get orbital instability of $e^{i\omega t}\bs{\varphi}_{0,\omega}^\alpha$
in $X_{\eq}$ since \eqref{Cauchy2} is well-posed in $X_{\eq}$ by Lemma \ref{well_X_k}. Consequently  $e^{i\omega t}\bs{\varphi}_{0,\omega}^\alpha$   is orbitally unstable in $X$.

$(b)$\,  By Proposition \ref{n(L_k)}-$(ii)$, for  $\alpha<0$ we have  $n(L^\alpha_{1,k})=1$ and consequently $n(\mc L^\alpha_k)=1$. If 
additionally $\omega\in\left(-m,\frac{-m\sqrt{p-1}}{2}\right)\cup\left(\frac{m\sqrt{p-1}}{2}, m\right)$, then $\partial_\omega Q(\bb \Phi_{0,\omega}^\alpha)>0$ (see Lemma \ref{slope}-$(i)$).
By Theorem \ref{thm_stabil}-$(i)$, we get orbital stability of  $e^{i\omega t}\bs{\varphi}_{0,\omega}^\alpha$
in $X$.
\end{itemize}
\end{proof}


\begin{thebibliography}{BN} 



\bibitem{AdaNoj14}
R. Adami, C. Cacciapuoti, D. Finco, D. Noja,\textit{ 
Variational properties and orbital stability of standing waves for NLS equation on a star graph}, J. Differential Equations {\bf 257} (2014), no.~10, 3738--3777. 

\bibitem{AlbGes05}
S. Albeverio, F. Gesztesy, R. Hoegh-Krohn, and H. Holden,
\textit{ Solvable Models in Quantum Mechanics}, 2$^{nd}$ edition, AMS Chelsea Publishing, Providence, RI, 2005.

\bibitem{AngGol18}
J. Angulo, N. Goloshchapova, \textit{Extension theory approach in the stability of the standing waves for the NLS equation with point interactions on a star graph}, Advances in Differential Equations {\bf 23}  (2018), 793--846. 




%

\bibitem{AngGol18a} J. Angulo,  N. Goloshchapova, \textit{On the orbital instability of excited states for the NLS equation with the $\delta$-interaction on a star graph}, 	Discrete  $\&$ Continuous Dynamical Systems {\bf 38} (2018), no.~ 10, 5039--5066.
 

\bibitem{Bal77}
J.M. Ball, \textit{Strongly continuous semigroups, weak solution and the variation of constants formula}, Proc. Amer. Math. Soc. \textbf{63} (1977), 370--373. 

 \bibitem{BanIgn14}
V. Banica, L.I. Ignat,\textit{ Dispersion for the Schr\"odinger equation on the line with multiple Dirac delta potentials and on delta trees}, Anal. Partial Differ. Equ. {\bf 7} (2014), no.~4,  903--927.

\bibitem{BerKuc13}
G. Berkolaiko\ and\ P. Kuchment, {\it Introduction to quantum graphs}, Mathematical Surveys and Monographs, 186, Amer. Math. Soc., Providence, RI, 2013.

\bibitem{CacFin17} 
C. Cacciapuoti, D. Finco, and D. Noja,
\textit{Ground state and orbital stability for the NLS equation on a general starlike graph with potentials},
Nonlinearity {\bf 30} (2017), 3271--3303.


\bibitem{CsoGen18} 
E. Csobo, F. Genoud, M. Ohta, J. Royer, \textit{Stability of Standing Waves for a Nonlinear Klein-Gordon Equation with Delta Potentials},  arXiv:1810.04874v1.



\bibitem{CazHar98} T. Cazenave, A. Haraux, \textit{An Introduction to Semilinear Evolution Equations}, Oxford Lecture Series in Mathematics and its Applications 13, Oxford University Press, 1998.


\bibitem{BieGen15} S. De Bi\`{e}vre, F. Genoud, S. Rota-Nodari, \textit{Orbital stability: analysis meets geometry, in Nonlinear optical and atomic systems}, arXiv:1407.5951v2.
 

\bibitem{GrilSha87} M. Grillakis, J. Shatah and W. Strauss, \textit{Stability theory of solitary waves in the presence of symmetry.} I, J. Funct. Anal. {\bf 74} (1987), no.~1, 160--197. 

\bibitem{GrilSha90}
M. Grillakis, J. Shatah and  W. Strauss, \textit{Stability theory of solitary waves in the presence of symmetry}. II, J. Funct. Anal. {\bf 94} (1990), no.~2, 308--348. 

\bibitem{JeaCoz09} L. Jeanjean, S. Le Coz, \textit{Instability for standing waves of nonlinear Klein-Gordon equations via mountain-pass arguments}, Trans.
Amer. Math. Soc. \textbf{361} (2009), 5401--5416.

\bibitem{Kai17}
 A. Kairzhan, \textit{Orbital instability of standing waves for NLS equation on star graphs}, Proc. Amer. Math. Soc. \textbf{ 147} (2019), 2911--2924.
\bibitem{Kat66} T. Kato,  \emph{Perturbation Theory for Linear Operators}, Die Grundlehren der Mathematischen Wissenschaften, Band 132, Springer-Verlag New York, Inc., New York, 1966.

\bibitem{Nai68} 
M.A. Naimark, \textit{Linear Differential Operators. Part II. Linear Differential Operators in Hilbert Space}, 	Frederick Ungar, New York, 1968.

\bibitem{Noj14} 
 D. Noja,
\textit{Nonlinear Schr\"odinger equation on graphs: recent results and open problems},
 Philos. Trans. R. Soc. Lond. Ser. A Math. Phys. Eng. Sci. {\bf 372} (2014), 20130002, 20 pp.

\bibitem{OhtTod07} M. Ohta, G. Todorova, \textit{Strong instability of standing waves for the nonlinear Klein-Gordon equation and the Klein-Gordon-Zakharov
system}, SIAM J. Math. Anal. \textbf{38} (2007), 1912--1931.


\bibitem{Paz83}
A. Pazy,  \textit{Semigroups of Linear Operators and Applications to Partial Differential Equations,} Applied Mathematical Sciences 44, Springer-Verlag, New York, 1983. 

\bibitem{ReeSim78} M. Reed and B. Simon,
 \emph{Methods of Modern Mathematical Physics. IV. Analysis of Operators,}
 Academic Press, New York, 1978.
\bibitem{Sch12} K. Schm\"udgen,
\textit{Unbounded Self-adjoint Operators on Hilbert Space,} Graduate Texts in Mathematics, 265, Springer, Dordrecht, 2012. 

\bibitem{Sha83}
J. Shatah, \textit{Stable standing waves of nonlinear Klein-Gordon equations}, Comm. Math. Phys. \textbf{91} (1983), 313--327.

\bibitem{Sha85} J. Shatah, \textit{Unstable ground state of nonlinear Klein-Gordon equations}, Trans. Amer. Math. Soc. \textbf{290} (1985), 701--710.

\bibitem{Stu08} C. A. Stuart, \textit{Lectures on the Orbital Stability of Standing Waves and Application to the Nonlinear Schr\"odinger Equation}, Milan Journal of Mathematics  \textbf{76} (2008),  329--399.
 
\end{thebibliography}
\end{document}